\documentclass[12pt,reqno]{amsart}

\usepackage{amsmath, amsthm, amssymb}
\usepackage{amsfonts}
\usepackage[ansinew]{inputenc}
\usepackage[dvips]{epsfig}
\usepackage{graphicx}
\usepackage[english]{babel}
\usepackage{graphicx}
\usepackage{hyperref}
\usepackage{bbm} 
\usepackage{cite}
\usepackage{graphicx}
\usepackage{amscd}
\usepackage{xcolor}
\usepackage{bm}
\usepackage{enumerate}

\usepackage{verbatim}
\usepackage{hyperref}
\usepackage{amstext}
\usepackage{latexsym}
\usepackage{marginnote}
\let\oldsqrt\sqrt
\def\sqrt{\mathpalette\DHLhksqrt}
\def\DHLhksqrt#1#2{%
\setbox0=\hbox{$#1\oldsqrt{#2\,}$}\dimen0=\ht0
\advance\dimen0-0.2\ht0
\setbox2=\hbox{\vrule height\ht0 depth -\dimen0}%
{\box0\lower0.4pt\box2}}

\allowdisplaybreaks
\newcommand{\R}{\mathbb{R}} 
\newcommand{\N}{\mathbb{N}} 

\newcommand{\dist}{\textnormal{dist}} 
\newcommand{\diam}{\textnormal{diam}} 
\newcommand{\supp}{\textnormal{supp}} 

\newcommand{\ov}{\overline}

\renewcommand{\phi}{\varphi}

\theoremstyle{definition}
\newtheorem{defi}{Definition}[section]
\newtheorem{remark}[defi]{Remark}

\theoremstyle{plain} 
\newtheorem{thm}[defi]{Theorem}
\newtheorem{prop}[defi]{Proposition}
\newtheorem{lemma}[defi]{Lemma}

\newtheorem{cor}[defi]{Corollary}

\theoremstyle{definition}

\numberwithin{equation}{section}

\title{Fractional Hardy inequalities on $C^{1,1}$ open sets}
\author[Abdelrazek Dieb and Remi Yvant Temgoua]{Abdelrazek Dieb$^{1}$ and Remi Yvant Temgoua$^{2}$} 

\address{$^1$ Department of Mathematics, Faculty of Mathematics and Computer Science \\
University Ibn Khaldoun of Tiaret, Algeria\\
and\\
Laboratoire d'Analyse Nonlin\'eaire et Math\'ematiques Appliqu\'ees\\
Abou Bakr Belkaid University, Tlemcen, Algeria 
}
\email{abdelrazek.dieb@univ-tiaret.dz}

\address{$^2$ The Fields Institute for research in mathematical sciences, 222 College Street, 2nd floor, Toronto, Ontario, m5t 3j1 Canada, and School of Mathematics and Statistics, Carleton University, 1125 Colonel By Dr, Ottawa, Ontario, K1S 5B6, Canada, and University of Bertoua, P.O Box: 416 Bertoua, Cameroon} 
\email{rtemgoua@fieldsinstitute.ca, remi.y.temgoua@aims-senegal.org} 

\keywords{Fractional Hardy inequality, Best constant, Regional fractional Laplacian, Improved fractional Hardy inequality.}
\subjclass[2020] {46E35, 35R11, 39B72, 47A75, 35A23.}

\begin{document}

\begin{abstract}
Let $\Omega$ be a bounded open set of class $C^{1,1}$ in $\R^N$ and $s\in(\frac{1}{2}, 1)$. We study a family of fractional Hardy-type inequalities 
\begin{equation}\label{a0}
\frac{c_{N,s}}{2}\displaystyle\iint_{\Omega\times\Omega}\frac{(u(x)-u(y))^2}{|x-y|^{N+2s}}\ dxdy-\displaystyle\lambda\int_{\Omega}u^2\ dx\geq C\displaystyle\int_{\Omega}\frac{u^2}{\delta^{2s}}\ dx,~~~\quad\forall\lambda\in\R,
\end{equation}
with $u\in C_c^\infty(\Omega)$ and $C=C(\Omega,s,N,\lambda)>0$. We show that the best constant in \eqref{a0} is achieved if and only if $\lambda>\lambda^*(s,\Omega)$, for some $\lambda^*(s,\Omega)\in\R$. As a by-product, we derive in particular that the best constant in Hardy inequality $\mu_{N,s}(\Omega)$ is achieved if and only if $\mu_{N,s}(\Omega)<\mathfrak{h}_{N,s}$, with $\mathfrak{h}_{N,s}$ being the best constant for the fractional Hardy inequality in the half space. Moreover, if $\Omega$ is a convex open set, we obtain a lower bound for $\lambda^*(s,\Omega)$ in terms of the volume of $\Omega$. Specifically, we prove that $\lambda^*(s,\Omega)\geq a(N,s)|\Omega|^{-\frac{2s}{N}}$ with an explicit constant $a(N,s)>0$. Finally, for bounded $C^{1,1}$ domains, we prove that, for s sufficiently close to $\frac 12$, the optimal Hardy constant is independent of both the geometry and the topology of $\Omega$. More precisely, we establish that $\mu_{N,s}(\Omega)=\mathfrak{h}_{N,s}.$
This behavior is in sharp contrast with the local case, where the topology/geometry of the domain strongly influences the value of the optimal constant, and reveals a new rigidity phenomenon in the nonlocal setting. 
\end{abstract}

\maketitle 

\section{Introduction and main results}\label{section:introduction}

Let $\Omega$ be a bounded open set in $\R^N$ and $s\in\left(\frac{1}{2},1\right)$. The fractional Hardy inequality in $\Omega$ states that (see e.g., \cite{chen2003hardy,dyda2004fractional})
\begin{equation}\label{hardy-inequality-in-domains}
\frac{c_{N,s}}{2}\iint_{\Omega\times\Omega}\frac{(u(x)-u(y))^2}{|x-y|^{N+2s}}\ dxdy\geq c\int_{\Omega}\frac{u^2}{\delta^{2s}}\ dx,~~~~~\forall u\in C^\infty_c(\Omega),
\end{equation}
where $c=c(N,s,\Omega)$ is a positive constant that depends only on $N, s$
, and $\Omega$, $\delta(x)=\delta_{\Omega}(x)=\dist(x,\partial\Omega)$, and $c_{N,s}$ a normalization constant such that
\begin{equation}\label{nonlocal-to-local}
    (-\Delta)^s_{\Omega}u\to-\Delta u~~~\textnormal{as}~~~~s\to1^-,
\end{equation} 
which is given explicitly by $c_{N,s}=2^{2s}\pi^{-\frac{N}{2}}s\frac{\Gamma(\frac{N+2s}{2})}{\Gamma(1-s)}$. Here, $(-\Delta)^s_{\Omega}$ denotes the regional fractional Laplacian which acts on smooth enough functions as 
\begin{equation*}
(-\Delta)^s_{\Omega}u(x)=c_{N,s}P.V.\int_{\Omega}\frac{u(x)-u(y)}{|x-y|^{N+2s}}\ dy=c_{N,s}\lim_{\varepsilon\to0^+}\int_{\{|x-y|\geq\varepsilon\}}\frac{u(x)-u(y)}{|x-y|^{N+2s}}\ dy.
\end{equation*}
This operator is the infinitesimal generator of the so-called \textit{censored stable L\'{e}vy processes} and has received great attention in recent years, see e.g. \cite{bogdan2003censored,chen2003hardy,dyda2004fractional,fall2023existence} and the references therein. The censored stable process is a jump process with jumps restricted to the underlying open set $\Omega$. Specifically speaking, jumps between $\Omega$ and its complement are suppressed. 

If $s=1$, then \eqref{nonlocal-to-local} tells that $(-\Delta)^s_{\Omega}$ coincides with the classical Laplacian $-\Delta$. In \cite{brezis1997hardy}, Brezis and Marcus studied a family of Hardy-type inequalities in $L^2$
\begin{equation}\label{local-minimization-problem}
J_{\lambda}(\Omega)=\inf_{u\in H^1_0(\Omega)}\frac{\displaystyle\int_{\Omega}|\nabla u|^2\ dx-\lambda\int_{\Omega}u^2\ dx}{\displaystyle\int_{\Omega}\frac{u^2}{\delta^2}\ dx},~~~\forall \lambda\in\R,
\end{equation}
and established existence and non-existence results. Precisely, among other results, they proved that for every bounded domain of class $C^2$, the infimum \eqref{local-minimization-problem} is achieved if and only if $J_{\lambda}(\Omega)<\frac{1}{4}$ with $\lambda>\lambda^*$ for some $\lambda^*\in\R$. Note that $\frac{1}{4}$ is the best constant in classical Hardy inequality on the half-space, namely, 
\begin{equation}\label{hardy-half-space-local}
\frac{1}{4}\int_{\R^N_+}\frac{u^2}{x_N^2}\ dx\leq\int_{\R^N_+}|\nabla u|^2\ dx,~~u\in H^1_0(\R^N_+).
\end{equation}
The regularity assumption on the domain plays a key role in the arguments developed in \cite{brezis1997hardy}.  Indeed, many arguments used in this paper are based on the well-known \textit{tubular neighbourhood theorem}, which allows to use tubular coordinates near the boundary of a domain of class $C^2$. Moreover, the assumption that $\Omega$ is of class $C^2$ is used to guarantee that the distance function $\delta$ is of class $C^2$ in a neighborhood of the boundary: this allows in turn to construct a suitable admissible test function for $J_{\lambda}(\Omega)$. 
\par
Let us point out that the case $\lambda=0$ is of particular interest since, in this case, \eqref{local-minimization-problem} is related to the classical Hardy inequality
\begin{equation*}
\mu(\Omega)\displaystyle\int_{\Omega}\frac{|u|^2}{\delta^2}\ dx\leq\displaystyle\int_{\Omega}|\nabla u|^2\ dx,
\end{equation*}
where $u\in H^{1}_0(\Omega)$. It was shown in \cite{Dav94,marcus1998best} that if the open set $\Omega$ is sufficiently regular, then the sharp constant $\mu(\Omega)$ depends on $\Omega$ and satisfies $\mu(\Omega)\leq \frac{1}{4}$. Moreover, for convex open sets $\mu(\Omega)=\frac{1}{4}$, though there are smooth bounded domains such that $\mu(\Omega)< \frac{1}{4}$. See for instance \cite{Dav94, Dav, marcus1998best, hoffmann2002geometrical, LaSo, Lewis,marcus2000eigenvalue}.

Brezis-Marcus's result was extended to $L^p$ setting $(p\neq2)$ in \cite{marcus2000eigenvalue} where the authors studied the quantity 
\begin{equation*}
    J_{\lambda,p}(\Omega)=\inf_{u\in W^{1,p}_0(\Omega)}\frac{\displaystyle\int_{\Omega}|\nabla u|^p\ dx-\lambda\int_{\Omega}\eta \frac{|u|^p}{\delta^p}\ dx}{\displaystyle\int_{\Omega}\frac{|u|^p}{\delta^p}\ dx},
\end{equation*}
for all $\lambda\in\R$, under the assumptions
\begin{equation*}
    \eta\in C(\ov\Omega),~~\eta>0~\textnormal{in}~\Omega,~~\eta=0~\textnormal{on}~~\partial\Omega.
\end{equation*}
To the best of the authors' knowledge, no such result is available in the literature for the fractional setting yet. We aim to bridge this gap in the present paper.\\
Before going further, let us first comment on the Hardy inequality \eqref{hardy-inequality-in-domains}, although it is difficult to provide a comprehensive account of the entire literature.
\par
We begin by recalling a fundamental result due to Dyda \cite{dyda2004fractional}, which asserts that the fractional Hardy inequality holds in a bounded Lipschitz domain if and only if $2s>1$\footnote{Actually, a more general fractional Hardy inequality has been proved in  \cite{dyda2004fractional}.}. This can be viewed as an analog of a result by Ne\v cas~\cite{Necas}, asserting that if $\Omega$ is a bounded Lipschitz domain and $1<p<\infty$, then there is $\mu(\Omega)>0$
such that the $p$-Hardy inequality
\begin{equation}\label{e.p-hardy}
\mu(\Omega)\int_{\Omega} \frac{\lvert u(x)\rvert^p}{\delta(x)^p}\,dx
\le \int_\Omega \lvert \nabla u(x)\rvert^p\,dx
\end{equation}
holds for all $u\in C^\infty_c(\Omega)$

In \cite{loss2010hardy}, Loss and Sloane found the best constant in \eqref{hardy-inequality-in-domains}, that is,
\begin{equation}\label{a3}
\mu_{N,s}(\Omega):=\inf_{u\in C^\infty_c(\Omega)}\frac{\frac{c_{N,s}}{2}\displaystyle\iint_{\Omega\times\Omega}\frac{(u(x)-u(y))^2}{|x-y|^{N+2s}}\ dxdy}{\displaystyle\int_{\Omega}\frac{u(x)^2}{\delta(x)^{2s}}\ dx}. 
\end{equation}
They showed that if $\Omega$ is \textit{convex}, then $\mu_{N,s}(\Omega)$ is independent of $\Omega$ and coincides with that on the half-space which was earlier found in \cite{bogdan2011best}. More precisely, they proved that 
\begin{equation}\label{a4}
\mu_{N,s}(\Omega)=\mu_{N,s}(\R^N_+):=c_{N,s}\kappa_{N,2s},
\end{equation}
where $\kappa_{N,2s}$ is the constant introduced in \cite{loss2010hardy} given by
\begin{align}
\kappa_{N,2s}:=\pi^{\frac{N-1}{2}}\frac{\Gamma(\frac{1+2s}{2})}{\Gamma(\frac{N+2s}{2})}\frac{1}{2s}\left[\frac{2^{1-2s}}{\sqrt{\pi}}\Gamma(1-s)\Gamma\left(\frac{1+2s}{2}\right)-1\right].
\end{align}
For ease of notation, we use $\mathfrak{h}_{N,s}:=\mu_{N,s}(\R^N_+)$. Remark that 
\begin{equation*}
\mathfrak{h}_{N,s}\to\frac{1}{4}~~~\text{as}~~s\to 1^{-},
\end{equation*}
and thus, $\mathfrak{h}_{N,s}$ agrees, asymptotically as $s\to1^-$, with the best constant in \eqref{hardy-half-space-local}.
\par
The question of determining the sharp constant in \eqref{hardy-inequality-in-domains} has been addressed first in \cite{bogdan2011best} for the half-space and then in \cite{loss2010hardy} for convex sets. See also \cite{dyda2012fractional,frank2009sharp, Frankgs} for some extended results to the $L^p$ setting.
We stress that the problem of finding the sharp constant in \eqref{hardy-inequality-in-domains} is widely open for a general open set. Our results shed light on this problem, see the discussion below.
\par
We highlight the recent works \cite{dykija, DV, EUSV, Ihna}, in which the authors establish sharp necessary and sufficient conditions on $\Omega$ and $s$ ensuring the validity of the fractional Hardy inequality, that is, conditions under which the constant in \eqref{a3} is positive. In addition, we quote the recent work by Adimurthi et al. \cite{adi_roy} who established a generalization of Dyda's result, \cite{dyda2004fractional}, and, among other things, proved fractional Hardy inequalities for the remaining critical cases. Moreover, the critical one-dimensional case has also been considered in \cite{adi_jana}.

For completeness, we also mention another important class of fractional Hardy inequalities associated with the so-called restricted fractional Laplacian; see \cite{brasco-cinti, bianchi2024on, FMT} and the references therein.

\subsection{Main results.}

In the present paper, we study the following minimization problem
\begin{equation}\label{a1}
\mathfrak{J}_{\lambda,s}(\Omega)=\inf_{u\in H^s_0(\Omega)}\frac{\frac{c_{N,s}}{2}\displaystyle\iint_{\Omega\times\Omega}\frac{(u(x)-u(y))^2}{|x-y|^{N+2s}}\ dxdy-\lambda\int_{\Omega}u^2\ dx}{\displaystyle\int_{\Omega}\frac{u^2}{\delta^{2s}}\ dx},~~~\forall\lambda\in\R,
\end{equation}
where $\Omega$ is a bounded $C^{1,1}$-open set in $\R^N$. Here, $H^s_0(\Omega)$ is the completion of $C^{\infty}_c(\Omega)$ in the fractional Sobolev space $H^s(\Omega)$, where 
\begin{align}
H^s(\Omega)=\left\{ u \in L^2(\Omega): \quad \frac{u(x)-u(y)}{|x-y|^{\frac{N+2s}{2}}} \in L^2(\Omega\times\Omega)\right\}.
\end{align}
Notice that $H^s(\Omega)$ is a Hilbert space endowed with the natural norm
\begin{align}
\|\cdot\|^2_{H^s(\Omega)}=\|\cdot\|^2_{L^2(\Omega)}+[~\cdot~]_s,
\end{align}
where $[~\cdot~]_s$ stands for the Gagliardo semi-norm given by
\begin{align*}
[\phi]_s=\frac{c_{N,s}}{2}\iint_{\Omega\times\Omega}\frac{(\phi(x)-\phi(y))^2}{|x-y|^{N+2s}}\ dxdy.
\end{align*}
Moreover, for $\frac{1}{2}<s<1$ the space $H_0^s(\Omega)$ equipped with the norm
\begin{align*}
\|\phi\|^2_s:=[\phi]_s,
\end{align*}
is a Hilbert space.

Now, observe that the function 
$$\lambda\mapsto\mathfrak{J}_{\lambda,s}(\Omega),$$ 
is concave and non-increasing on $\R$, 
$$\mathfrak{J}_{\lambda,s}(\Omega)\to-\infty \,\text{ when } \lambda\to+\infty \quad\text{ and }\quad\; \mathfrak{J}_{\lambda,s}(\Omega)=0 \text{ for } \lambda=\lambda_{1,s},$$
where $\lambda_{1,s}$ is the first eigenvalue of $(-\Delta)^s_{\Omega}$ in $H^s_0(\Omega)$ with Dirichlet boundary conditions. Finally, we stress that, 
\begin{align*}
\mathfrak{J}_{\lambda,s}(\Omega)=\mu_{N,s}(\Omega)\quad \text{ for }\lambda=0.
\end{align*}
The first result of this paper is the following. 
\begin{thm}\label{first-main-result}
Let $s\in (\frac12,\,1)$. Then for every bounded open set $\Omega$ of class $C^{1,1}$ in $\R^N$, $N\geq 2$, there exists a constant $\lambda^*(s,\Omega)\in\R$ such that
\begin{align}
&\mathfrak{J}_{\lambda,s}(\Omega)=\mathfrak{h}_{N,s},~~~~\forall\lambda\leq\lambda^*(s,\Omega)\label{a6},\\
&\mathfrak{J}_{\lambda,s}(\Omega)<\mathfrak{h}_{N,s},~~~~\forall\lambda>\lambda^*(s,\Omega)\label{a7}.
\end{align}
Furthermore, the infimum in \eqref{a1} is achieved if and only if $\lambda>\lambda^*(s,\Omega)$. 
\end{thm}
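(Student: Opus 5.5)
We follow the Brezis--Marcus strategy, adapted to the regional (censored) setting. We combine two local facts near $\partial\Omega$ with a concentration--compactness argument.

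\emph{Step 1: $\mathfrak{J}_{\lambda,s}(\Omega)\le\mathfrak{h}_{N,s}$ for every $\lambda\in\R$.} Since $\Omega$ is $C^{1,1}$, near a boundary point $x_0$ we flatten $\partial\Omega$. We take test functions concentrating at $x_0$, namely rescalings $u_\varepsilon(x)=\phi((x-x_0)/\varepsilon)$ of near-optimizers $\phi\in C^\infty_c(\R^N_+)$ for the half-space Hardy quotient. As $\varepsilon\to0$, three things happen:
\begin{itemize}
\item the term $\lambda\int u_\varepsilon^2$ is of order $\varepsilon^{2s}$ relative to $\int u_\varepsilon^2\delta^{-2s}$;
\item $\delta$ compares to the distance to the tangent hyperplane up to $O(|x-x_0|^2)$;
\item the Gagliardo seminorm over $\Omega\times\Omega$ converges, after scaling, to that over $\R^N_+\times\R^N_+$.
\end{itemize}
Hence the quotient tends to at most $\mathfrak{h}_{N,s}+\eta$ for any $\eta>0$.

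\emph{Step 2: a local Hardy inequality near the boundary.} We show that for every $\eta>0$ there are $\beta>0$ and $C_\eta$ such that
\[
[u]_s\ \ge\ (\mathfrak{h}_{N,s}-\eta)\int_{\Omega}\frac{u^2}{\delta^{2s}}\,dx - C_\eta\int_\Omega u^2\,dx,\qquad u\in H^s_0(\Omega).
\]
To prove this, we cover $\Omega_\beta=\{\delta<\beta\}$ by small boundary patches and take a partition of unity $\{\psi_j\}$ with $\sum\psi_j^2=1$. Using the IMS-type localization for the quadratic form,
\[
[u]_s\ge\sum_j[\psi_j u]_s-C\|u\|_{L^2}^2,
\]
where the error kernel $\sum_j(\psi_j(x)-\psi_j(y))^2|x-y|^{-N-2s}$ is integrable in $y$ because $s<1$. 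On each patch we flatten via a $C^{1,1}$ diffeomorphism with Jacobian $1+O(\beta)$. We then apply the half-space inequality of \cite{bogdan2011best}. On the region where $\Omega$ near the patch is locally contained in a slightly perturbed half-space we lose only $O(\beta)$, and we absorb the interior part with $\delta^{-2s}\le\beta^{-2s}$.

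\textbf{This is the main obstacle.} The regional form only integrates over $\Omega\times\Omega$, so we cannot simply extend by zero. We must compare $\iint_{\Omega\times\Omega}$ with $\iint_{H\times H}$ for the flattened image $H$ and control the symmetric-difference contributions. This should be possible because $\psi_j u$ is supported in a small ball, where $\Omega$ and the half-space differ by a region of width $O(r^2)$. The interaction integrals there should be bounded by $C\int u^2\delta^{-2s}\cdot O(r^{2s'})$ terms plus $L^2$ terms. The condition $s>\frac12$ is used here, via Dyda's Hardy inequality, to keep the boundary-layer errors subordinate.

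\emph{Step 3: definition of $\lambda^*$ and the dichotomy.} From Step 2, $[u]_s+C_\eta\|u\|^2\ge(\mathfrak{h}_{N,s}-\eta)\int u^2\delta^{-2s}$. Combined with the lower bound $\int u^2\delta^{-2s}\ge\beta^{-2s}$-type control, this gives $\mathfrak{J}_{\lambda,s}(\Omega)\ge\mathfrak{h}_{N,s}$ for $\lambda$ sufficiently negative. To see this, split $\int u^2=\int_{\Omega_\beta}+\int_{\Omega\setminus\Omega_\beta}$, bound $\int_{\Omega_\beta}u^2\le\beta^{2s}\int u^2\delta^{-2s}$, and use $-\lambda\int u^2$ with $\lambda\ll0$ to absorb $C_\eta\|u\|^2$ and then let $\eta\to0$. (If the $\eta$-loss cannot be removed directly, we refine Step 2 to an error $O(\beta)\int u^2\delta^{-2s}$ and use the monotonicity in $\lambda$.) We then set
\[
\lambda^*=\sup\{\lambda:\mathfrak{J}_{\lambda,s}(\Omega)=\mathfrak{h}_{N,s}\}.
\]
This is finite since $\mathfrak{J}_{\lambda,s}\to-\infty$. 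By monotonicity together with Step 1, \eqref{a6} and \eqref{a7} hold. Concavity gives continuity, so the sup is attained.

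\emph{Step 4: existence and nonexistence.} Suppose $\mathfrak{J}_{\lambda,s}<\mathfrak{h}_{N,s}$. Take a minimizing sequence normalized by $\int u_n^2\delta^{-2s}=1$; it is bounded in $H^s_0$, so a subsequence converges weakly to $u$ and strongly in $L^2$. If mass of $u_n^2\delta^{-2s}$ were lost at the boundary, Step 2 applied to $u_n-u$ would force the quotient to be at least $\mathfrak{h}_{N,s}$ in the limit. A Brezis--Lieb splitting of both quadratic forms then shows that $u$ attains the infimum.

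Conversely, suppose $\lambda\le\lambda^*$ and a minimizer $u$ exists. Then for $\lambda'\in(\lambda,\lambda^*]$ we get
\[
\mathfrak{J}_{\lambda',s}\le\mathfrak{h}_{N,s}-(\lambda'-\lambda)\frac{\int u^2}{\int u^2\delta^{-2s}}<\mathfrak{h}_{N,s},
\]
which contradicts \eqref{a6}. This leaves the case $\lambda=\lambda^*$ itself, where $\lambda'>\lambda^*$ gives no contradiction. For $\lambda<\lambda^*$ the argument above works. For $\lambda=\lambda^*$ we would argue via the Euler--Lagrange equation $(-\Delta)^s_\Omega u-\lambda u=\mathfrak{h}_{N,s}u\delta^{-2s}$ with $u>0$. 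Boundary asymptotics, via a supersolution comparison, show that $u$ behaves like $\delta^{\alpha}$ with the critical exponent for $\mathfrak{h}_{N,s}$, which forces $\int u^2\delta^{-2s}=\infty$. We expect this endpoint case to require extra care as well.
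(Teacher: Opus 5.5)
The genuine gap is in your Step~3. Your Steps~1 and~4 are sound and match Lemmas~\ref{lema1}, \ref{lema2}, \ref{lema3}: existence when $\mathfrak{J}_{\lambda,s}(\Omega)<\mathfrak{h}_{N,s}$, and nonexistence for $\lambda<\lambda^*$. Your existence argument even works with the lossy Step~2, since $u_n-u\to0$ in $L^2(\Omega)$ while $\int(u_n-u)^2\delta^{-2s}$ stays bounded.

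Step~3 is supposed to show that $\lambda^*$ is finite, which is the real content of \eqref{a6}. Step~2 gives
\[
[u]_s-\lambda\int_\Omega u^2\,dx\ \ge\ \mathfrak{h}_{N,s}\int_\Omega\frac{u^2}{\delta^{2s}}\,dx-\eta\int_\Omega\frac{u^2}{\delta^{2s}}\,dx+(-\lambda-C_\eta)\int_\Omega u^2\,dx .
\]
To conclude, you would need $\eta\int u^2\delta^{-2s}\le(-\lambda-C_\eta)\int u^2$ for all $u$. This is false, because $\int u^2\delta^{-2s}/\int u^2$ is unbounded; your Step~1 functions show it. Your proposed repairs do not help:
\begin{itemize}
\item The bound $\int_{\Omega_\beta}u^2\le\beta^{2s}\int u^2\delta^{-2s}$ points the wrong way.
\item Letting $\eta\to0$ forces $\lambda\to-\infty$, since $C_\eta\to\infty$ in your construction.
\item An error $O(\beta)\int u^2\delta^{-2s}$ is the same kind of loss.
\end{itemize}
All you get is $\lim_{\lambda\to-\infty}\mathfrak{J}_{\lambda,s}(\Omega)=\mathfrak{h}_{N,s}$, and monotonicity plus concavity cannot upgrade this. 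For instance, $\lambda\mapsto\mathfrak{h}_{N,s}-e^{\lambda}$ is concave and decreasing, and it is an infimum of affine functions with negative slopes, as $\mathfrak{J}_{\lambda,s}(\Omega)$ is. It tends to $\mathfrak{h}_{N,s}$ but never attains it. So $\lambda^*=-\infty$ is not excluded.

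What is needed is Step~2 with $\eta=0$ and only an $L^2$ remainder. The paper's route is the local inequality \eqref{a100} on $\Omega_\beta$ with exactly the constant $\mathfrak{h}_{N,s}$. The cutoff $\varrho$ and the commutator bound \eqref{a17} then cost only $L^2$ terms and give an explicit $\lambda$.

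Your flattening cannot produce this. The metric distortion $1+O(r)$ is multiplicative, so it recreates a loss $O(r)\int u^2\delta^{-2s}$. By comparison, the $\Omega\times\Omega$ versus half-space issue you single out is harmless: it costs only $O(r^{-2s})\|u\|_{L^2(\Omega)}^2$, since the support of $\psi_ju$ stays at distance $\gtrsim r$ from the uncovered part of the half-space. You would have to show that the geometric errors are genuinely of lower order, say $O(\int u^2\delta^{1-2s})$. You would then absorb them into the Hardy excess $[u]_s-\mathfrak{h}_{N,s}\int u^2\delta^{-2s}$, not into the Hardy term. In the local case this is what the substitution $u=\delta^{1/2}v$ achieves, through the one-dimensional bound $\int_0^\beta u'^2-\frac14\int_0^\beta u^2t^{-2}\ge\frac{1}{4\beta}\int_0^\beta u^2t^{-1}$.

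The paper's own derivation of \eqref{a100} does not settle this either. It replaces $|\Psi(\sigma,t)-\Psi(\bar\sigma,r)|$ by a multiple of $|t-r|$ inside a lower bound, whereas in fact $|\Psi(\sigma,t)-\Psi(\bar\sigma,r)|\ge|t-r|$.

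The same loss-free inequality is what the endpoint $\lambda=\lambda^*$ needs, so your sketch there is also incomplete. In Theorem~\ref{non-existence-theorem}, the comparison step $w_\alpha^+=0$ uses exactly the nonnegativity of $[w]_{s,\Omega_\beta}-\mathfrak{h}_{N,s}\int_{\Omega_\beta}w^2\delta^{-2s}$. With an $\eta$-loss it breaks down. Moreover, the barrier cannot be the pure power $\delta^{(2s-1)/2}$: it is not in $H^s_0(\Omega)$, since its Hardy integral diverges, so it cannot be used in an energy comparison. The paper instead uses $v_\alpha=\delta^{(2s-1)/2}(1-\log\delta)^{-\alpha}$ and lets $\alpha\downarrow\frac12$. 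This yields $u\ge\varepsilon\delta^{(2s-1)/2}(1-\log\delta)^{-1/2}$, which already forces $\int u^2\delta^{-2s}=\infty$.
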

In particular, we find that there exists $\lambda\in\R$ such that
\begin{equation}\label{a8}
\frac{c_{N,s}}{2}\iint_{\Omega\times\Omega}\frac{(u(x)-u(y))2}{|x-y|^{N+2s}}\ dxdy\geq\mathfrak{h}_{N,s}\int_{\Omega}\frac{u^2}{\delta^{2s}}\ dx+\lambda\int_{\Omega}u^2\ dx~~~~\forall u\in H^s_0(\Omega).
\end{equation}
Thus, the largest such constant is precisely $\lambda^*(s,\Omega)$, that is,
\begin{equation}\label{a9}
\lambda^*(s,\Omega)=\inf_{u\in H^s_0(\Omega)}\frac{\frac{c_{N,s}}{2}\displaystyle\iint_{\Omega\times\Omega}\frac{(u(x)-u(y))^2}{|x-y|^{N+2s}}\ dxdy-\mathfrak{h}_{N,s}\int_{\Omega}\frac{u^2}{\delta^{2s}}\ dx}{\displaystyle\int_{\Omega}u^2\ dx}.
\end{equation}
Owing to Theorem \ref{first-main-result}, the infimum in \eqref{a9} is not achieved. Notice that $\lambda^*(s,\Omega)$ is well-defined for every bounded Lipschitz open sets. In fact, for $u\in H^s_0(\Omega)$, the first term on the numerator in the above infimum is always finite. Moreover, by our assumption on the open set, we invoke \cite[Theorem 1.1]{dyda2004fractional} to see that the Hardy term $\displaystyle\int_{\Omega}\frac{u^2}{\delta^{2s}}\ dx$ is also finite.

Note that Theorem \ref{first-main-result} is the fractional analog of the main result of Brezis and Marcus \cite{brezis1997hardy}, which treats the local case. It is also important to note that if the infimum \eqref{a1} is achieved at a function $u$, then $u$ satisfies the Euler-Lagrange equation 
\begin{equation}\label{eigenvalue-problem}
(-\Delta)^s_{\Omega}u-\lambda u=\mathfrak{J}_{\lambda,s}(\Omega)\frac{u}{\delta^{2s}}~~~\text{in}~~\Omega. 
\end{equation}  
Thus, $\mathfrak{J}_{\lambda,s}(\Omega)$ can be considered as the principal weighted eigenvalue of $(-\Delta)^s_{\Omega}-\lambda$ with respect to the Hardy weight, and $u$ is a corresponding principal eigenfunction.\\

The proof of Theorem \ref{first-main-result} is divided into several  steps.\\

\textbf{Step 1.} First, we prove that $\mathfrak{J}_{\lambda,s}(\Omega)\leq\mathfrak{h}_{N,s}$ for all $\lambda\in \R$. The main idea uses the fact that $\mathfrak{h}_{N,s}$ is the sharp constant in the fractional Hardy inequality on the half-space, and then to consider a rescaled minimizing sequence that is supported in a cone with a vertex on the boundary of the open set where a geometric assumption is satisfied. \\

\textbf{Step 2.} We prove that $\mathfrak{J}_{\lambda,s}(\Omega)=\mathfrak{h}_{N,s}$ for some $\lambda$. The main ingredient in the proof of this result is the following inequality
\begin{equation}
\frac{c_{N,s}}{2}\iint_{\Omega_{\beta}\times\Omega_{\beta}}\frac{(u(x)-u(y))^2}{|x-y|^{N+2s}}\ dxdy\geq\mathfrak{h}_{N,s}\int_{\Omega_{\beta}}\frac{u(x)^2}{\delta(x)^{2s}}\ dx~~~~~~\forall u\in H^s_0(\Omega)
\end{equation}
(where $\Omega_{\beta}=\{x\in\Omega: \delta(x)<\beta\}$ is the so-called \textit{inner tubular neighborhood} of $\Omega$) which is valid for all $\beta$ sufficiently small.\\

\textbf{Step 3.} We prove that the infimum \eqref{a1} is achieved for every $\lambda>\lambda^*(s,\Omega)$. The proof follows essentially the same strategy as in \cite{brezis1997hardy}.\\

\textbf{Step 4.} We prove that the infimum \eqref{a1} is not achieved for any $\lambda\leq\lambda^*(s,\Omega)$. The proof requires a more delicate argument: it relies on the next non-existence result. 

\begin{thm}\label{non-existence-theorem}
Let $\Omega$ be a bounded open set in $\R^N$ of class $C^{1,1}$. Suppose that $u$ is a nonnegative function in $H^s_0(\Omega)\cap C(\Omega)$ satisfying the inequality 
	\begin{equation}\label{supersoution-lower-bound}
	(-\Delta)^s_{\Omega}u-\frac{\mathfrak{h}_{N,s}}{\delta^{2s}}u\geq-\frac{\eta}{\delta^{2s}}u~~~\text{in}~\Omega, 
	\end{equation}
	where $\eta$ is a continuous nonnegative function in $\overline{\Omega}$ such that
	\begin{equation}\label{condition-nonexistence}
	\lim\limits_{\delta(x)\to0}\eta(x)(\delta(x)^{1-s}\log \delta(x))^{2}=0.
	\end{equation}
	Then $u\equiv0$.
\end{thm}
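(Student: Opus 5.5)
The plan is to argue by contradiction through a comparison (maximum-principle) argument against an explicit subsolution near the boundary. Assume $u\not\equiv 0$. Then the inequality \eqref{supersoution-lower-bound} makes $u$ a nonnegative supersolution of $(-\Delta)^s_\Omega u\geq 0$ up to a lower-order perturbation. By a strong maximum principle for the regional operator, $u>0$ in $\Omega$, and $u\geq c>0$ on any compact subset. The target is to show that $u$ cannot belong to $H^s_0(\Omega)$, because it decays too slowly at $\partial\Omega$.

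The exponent comes from the half-space problem. The function $x_N^{\alpha}$ satisfies $(-\Delta)^s_{\R^N_+}x_N^{\alpha}=C(\alpha)x_N^{\alpha-2s}$. The sharp constant $\mathfrak{h}_{N,s}=\max_\alpha C(\alpha)$ is attained at the critical exponent $\alpha=\frac{2s-1}{2}$, which is the double root of $C(\alpha)=\mathfrak{h}_{N,s}$. At a double root, $x_N^{\alpha}\log(1/x_N)$ is also a solution. This suggests the comparison function
\[
w(x)=\delta(x)^{\frac{2s-1}{2}}\bigl(\log(1/\delta(x))\bigr)^{\theta}\quad\text{near }\partial\Omega,
\]
cut off and extended smoothly inside, with $\theta\in(0,1)$ or some small modification. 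Such a function is borderline: $\int_\Omega w^2\delta^{-2s}\,dx=\int \delta^{-1}(\log)^{2\theta}\,d\delta=\infty$.

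\textbf{Key computation.} Using the $C^{1,1}$ regularity, flatten the boundary with tubular coordinates, where $\delta$ is $C^{1,1}$ in $\Omega_\beta$. Show that
\[
(-\Delta)^s_\Omega w-\frac{\mathfrak{h}_{N,s}}{\delta^{2s}}w\leq -\frac{c\,w}{\delta^{2s}(\log(1/\delta))^{2}}+O\bigl(\delta^{1-2s}\bigr)w \quad\text{in }\Omega_\beta .
\]
The leading negative term comes from the second-order expansion of $C(\alpha)$ around its maximum applied to the log factor. The $O(\delta^{1-2s})$ error comes from curvature. Condition \eqref{condition-nonexistence} says exactly that $\eta\,\delta^{-2s}=o\bigl(\delta^{-2s}\,\delta^{2s-2}(\log\delta)^{-2}\bigr)$. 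Weighted against $w$, this means $\eta w/\delta^{2s}$ is dominated by the gain term after the appropriate normalization. I expect the precise bookkeeping of powers to force a slight adjustment of $w$; the log power $\theta$ is chosen so that the gain dominates both $\eta$ and the curvature error. The outcome is that $w_\varepsilon=\varepsilon w$ is a strict subsolution of $L v:=(-\Delta)^s_\Omega v-(\mathfrak{h}_{N,s}-\eta)\delta^{-2s}v\leq 0$ in $\Omega_\beta$, for $\beta$ small.

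\textbf{Comparison.} Choose $\varepsilon$ so that $\varepsilon w\leq u$ on $\Omega\setminus\Omega_\beta$, which is possible since $u$ is bounded below there. Test the inequality $L(\varepsilon w-u)\leq 0$ against $(\varepsilon w-u)^+$, which is supported in $\Omega_\beta$. Then use the Hardy inequality of Step 2 on $\Omega_\beta$, with constant $\mathfrak{h}_{N,s}$ and the positivity of $\eta$, to get $(\varepsilon w-u)^+=0$. Hence $u\geq\varepsilon w$ near $\partial\Omega$, so $\int_\Omega u^2\delta^{-2s}=\infty$. This contradicts Dyda's Hardy inequality, since $u\in H^s_0(\Omega)$ forces $\int u^2\delta^{-2s}<\infty$. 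Therefore $u\equiv 0$.

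\textbf{Main obstacle.} The hardest step is the key computation. The regional operator is nonlocal, so computing $(-\Delta)^s_\Omega$ of $\delta^\alpha(\log 1/\delta)^\theta$ requires careful control of the far-field contribution and of the curvature error of the flattening in $C^{1,1}$ domains. The testing step is also delicate, because $w\notin H^s_0$. I would handle it by first truncating $w$ as $\min(w,M)$, or by replacing $\delta$ with $\delta+\tau$, and then passing to the limit.
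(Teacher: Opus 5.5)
Your architecture matches the paper's: strong maximum principle, an explicit subsolution in $\Omega_\beta$, comparison tested against $(\varepsilon w-u)^+$ via \eqref{a100}, and a contradiction with $u\delta^{-s}\in L^2(\Omega)$. The barrier you chose, however, has the wrong sign.

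Write $p_0=\frac{2s-1}{2}$, $\ell=\log(1/\delta)$ and $(-\Delta)^s_{\R^N_+}x_N^p=C(p)\,x_N^{p-2s}$. Differentiating $C(p)=c\,\mathrm{PV}\!\int_0^\infty(1-r^p)|1-r|^{-1-2s}\,dr$ twice in $p$ gives $C''<0$. So $C$ is strictly concave with maximum $\mathfrak{h}_{N,s}$ at $p_0$, and in the half-space model the expansion you invoke reads $(-\Delta)^s\big(x_N^{p_0}g(\ell)\big)-\mathfrak{h}_{N,s}x_N^{-2s}\,x_N^{p_0}g(\ell)\approx\frac{C''(p_0)}{2}\,g''(\ell)\,x_N^{p_0-2s}$ (exact for $g=\ell^2$). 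For $g=\ell^\theta$ with $\theta\in(0,1)$ one has $g''<0$, so the right-hand side is positive. Your $w$ is therefore a \emph{super}solution, and the claimed gain $-c\,w\,\delta^{-2s}\ell^{-2}$ has the wrong sign. The local analogue makes this transparent: for $\phi=t^{1/2}\ell^{\theta}$ one computes $-\phi''-\frac{\phi}{4t^2}=\theta(1-\theta)\,t^{-3/2}\ell^{\theta-2}>0$. A gain of the right sign requires $g''>0$, i.e.\ a \emph{negative} power of the logarithm. This is why the paper uses $\chi_\alpha(t)=t^{\frac{2s-1}{2}}(1-\log t)^{-\alpha}$.

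With $g=(1+\ell)^{-\alpha}$, the Hardy integral of $\delta^{p_0}g(\ell)$ is finite exactly when $\alpha>\frac12$. A subsolution whose Hardy integral diverges cannot be used in the comparison, because $(\varepsilon w-u)^+$ need not belong to $H^s_0(\Omega_\beta)$ and \eqref{a100} is then unavailable. Your proposed repairs do not address this. First, $\min(w,M)=w$ near $\partial\Omega$, where $w\to0$. Second, replacing $\delta$ by $\delta+\tau$ destroys the zero boundary values, so $(\varepsilon w_\tau-u)^+\notin H^s_0$, and it also loses the explicit computation.

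The missing idea is the Brezis--Marcus limiting argument. Perform the comparison for every $\alpha\in(\frac12,1)$ with $\beta$ and $\varepsilon$ independent of $\alpha$; this is possible since the gain factor satisfies $\alpha(\alpha+1)\ge\frac34$ and $v_\alpha\le\delta^{p_0}$. This gives $u\ge\varepsilon\,\delta^{p_0}(1-\log\delta)^{-\alpha}$ in $\Omega_\beta$, and then let $\alpha\downarrow\frac12$. Only the limit has a (logarithmically) divergent Hardy integral.

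Finally, your handling of $\eta$ does not close, and no adjustment of $w$ can fix it. The hypothesis \eqref{condition-nonexistence} only gives $\eta=o(\delta^{2s-2}\ell^{-2})$. Since $\delta^{2s-2}\to\infty$, this does not make $\eta\,\delta^{-2s}w$ smaller than a gain of size $\delta^{-2s}\ell^{-2}w$; in fact, for $\eta\in C(\overline\Omega)$ the condition holds automatically. The argument needs $\eta(\log\delta)^2\to0$, which is what Lemma \ref{lema3} actually supplies ($\eta=|\lambda^*|\delta^{2s}$). With $\eta$ a positive constant the conclusion is false. A nonnegative minimizer from Lemma \ref{lema2} with $\lambda>\lambda^*(s,\Omega)$ satisfies \eqref{supersoution-lower-bound} with $\eta=\mathfrak{h}_{N,s}-\mathfrak{J}_{\lambda,s}(\Omega)+|\lambda|\sup_\Omega\delta^{2s}$, yet is not identically zero.
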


As a byproduct of Theorem \ref{first-main-result}, we derive the following corollary.
\begin{cor}\label{first-main-result-corl}
The infimum in \eqref{a3} is achieved if and only if $$\mu_{N,s}(\Omega)<\mathfrak{h}_{N,s}.$$
Moreover, if $\mu_{N,s}(\Omega)$ admits a minimizer $u\in H^s_0(\Omega)$ then $u$ is, up to a constant, the unique weak solution to
\begin{align}
(-\Delta)^s_{\Omega}u=\mu_{N,s}(\Omega)~\frac{u}{\delta^{2s}}\quad \text{ in } \Omega,\quad u>0 \quad \text{ in } \Omega, \quad \text{ and }~~ u=0 \text{ on } \partial\Omega.
\end{align}
\end{cor}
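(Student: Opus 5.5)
We specialize Theorem \ref{first-main-result} to $\lambda=0$, where $\mathfrak{J}_{0,s}(\Omega)=\mu_{N,s}(\Omega)$. By \eqref{a6}--\eqref{a7}, $\mu_{N,s}(\Omega)<\mathfrak{h}_{N,s}$ holds exactly when $0>\lambda^*(s,\Omega)$. The theorem also says the infimum is achieved precisely in this range. So $\mu_{N,s}(\Omega)$ is achieved if and only if $\mu_{N,s}(\Omega)<\mathfrak{h}_{N,s}$. This gives the first assertion. One must check that the infimum over $C_c^\infty(\Omega)$ in \eqref{a3} equals the one over $H^s_0(\Omega)$ in \eqref{a1}. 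This follows from density, since both terms of the quotient are continuous in the $H^s_0$-norm, the Hardy term being controlled by \cite[Theorem 1.1]{dyda2004fractional}.

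For the second part, let $u\in H^s_0(\Omega)$ be a minimizer.

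\textbf{Positivity.} Since $|u(x)|-|u(y)|$ is bounded in absolute value by $|u(x)-u(y)|$, the function $|u|$ is also a minimizer. We may therefore assume $u\ge0$. Taking the first variation shows that $u$ is a weak solution of
\[
(-\Delta)^s_\Omega u=\mu_{N,s}(\Omega)\,\frac{u}{\delta^{2s}}.
\]
The right-hand side is nonnegative, so $u$ is a nonnegative weak supersolution of $(-\Delta)^s_\Omega u\ge0$. A strong maximum principle for the regional fractional Laplacian then gives $u>0$ in $\Omega$. It can be obtained from a weak Harnack inequality, or directly: at a point where $u=0$, the operator equals $-c_{N,s}\,\mathrm{P.V.}\int_\Omega u(y)|x-y|^{-N-2s}\,dy<0$ unless $u\equiv0$. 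Doing this pointwise requires some local regularity of $u$, for instance continuity from standard interior estimates, since the weight $\delta^{-2s}$ is bounded on compact subsets. The boundary condition $u=0$ on $\partial\Omega$ is encoded in $u\in H^s_0(\Omega)$.

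\textbf{Uniqueness.} Every minimizer has constant sign, since $|u|$ is a minimizer and hence positive. Let $u,v$ be two positive minimizers. Choose $t$ such that $w=u-tv$ satisfies $\int_\Omega w\,\phi\,dx=0$ for some fixed positive $\phi\in C_c^\infty(\Omega)$. The minimizers together with $0$ form the null space of the nonnegative quadratic form
\[
Q(w)=[w]_s-\mu_{N,s}(\Omega)\int_\Omega \frac{w^2}{\delta^{2s}}\,dx ,
\]
which is therefore a linear subspace. Hence $w$ is a minimizer or $w=0$. If $w\neq0$, then $w$ has a strict sign, which contradicts $\int_\Omega w\,\phi\,dx=0$. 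Therefore $w=0$ and $u=tv$.

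A weak solution as in the corollary is also a minimizer: testing the equation with $u$ gives $Q(u)=0$. This closes the uniqueness up to constants.

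The main obstacle I expect is justifying the strong maximum principle for a weak supersolution of the regional operator with the singular weight. I would handle it locally on compact subsets of $\Omega$, where the weight is bounded, using a weak Harnack inequality for $(-\Delta)^s_\Omega$.
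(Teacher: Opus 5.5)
Your first part (specializing Theorem~\ref{first-main-result} to $\lambda=0$, with the density remark) and your positivity step ($|u|$ is a minimizer, Euler--Lagrange equation, strong maximum principle) are the paper's argument. The paper simply cites a strong maximum principle for the regional operator \cite{abatangelo2023hopf} where you sketch one.

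You differ genuinely on uniqueness. The paper uses a Picone-type ground state representation (Proposition 2.3 of \cite{frank2009sharp}). Writing $u=vw$, it gets
\[
Q(u)\ge \frac{c_{N,s}}{4}\iint_{\Omega\times\Omega}\frac{(w(x)-w(y))^2}{|x-y|^{N+2s}}\,v(x)v(y)\,dx\,dy ,
\]
so $Q(u)=0$ forces $w$ to be constant. You instead use that the zero set of the nonnegative form $Q$ is a linear subspace, by Cauchy--Schwarz for semidefinite forms. Then you run the classical simplicity argument: every nonzero element is a minimizer, hence of strict sign, hence cannot annihilate a nonnegative bump.

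Your route is purely Hilbert-space. It needs only the lattice property and the strong maximum principle you already used for positivity. It also avoids justifying a ground-state identity for a weak solution with the singular potential $\delta^{-2s}$. The paper's route gives a quantitative remainder term and, once the identity is justified, needs no sign information on the competitor.

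Your closing remark is a real addition. Testing a positive weak solution with itself gives $Q(u)=0$, which upgrades uniqueness among minimizers to uniqueness among weak solutions, as the statement claims. The paper only compares two minimizers.

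One step needs tightening. From ``$|w|$ is a minimizer and hence positive'' you only get $w\neq 0$ a.e., not that $w$ keeps one sign. You can close this with your own linearity observation:
\begin{itemize}
\item $w^{\pm}=\frac12(|w|\pm w)$ both lie in the null space of $Q$;
\item each is either $0$ or a nonnegative minimizer, hence positive a.e. by the strong maximum principle;
\item $w^+w^-=0$, so one of them must vanish.
\end{itemize}
Alternatively, equality $[|w|]_s=[w]_s$ together with $(|a|-|b|)^2\le (a-b)^2$, with equality iff $ab\ge 0$, forces $w(x)w(y)\ge0$ for a.e. $(x,y)$. The paper's footnote makes the same compressed claim, so this is a minor fix, not a flaw in your approach.
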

The first part of the above corollary indicates that a minimizing sequence cannot concentrate at the boundary of the domain. This corollary presents some similarities with the study of fractional Sobolev inequalities
on domains, see \cite{fall2023existence,frank2025sharp,frank2018minimizers}. 

In \cite{brezis1997hardy} Brezis and Marcus proved that, if $\Omega$ is an arbitrary convex domain, the largest constant $\lambda(\Omega)$ in the inequality
\begin{equation*}
    \int_{\Omega}|\nabla u(x)|^2\ dx\geq\frac{1}{4}\int_{\Omega}\frac{u(x)^2}{\delta(x)^2}\ dx+\lambda(\Omega)\int_{\Omega}u(x)^2\ dx,~~~u\in H^1_0(\Omega),
\end{equation*}
satisfies
\begin{equation*}
    \lambda(\Omega)\geq\frac{1}{4\diam^2(\Omega)}.
\end{equation*}
They also questioned in \cite{brezis1997hardy} whether the diameter could be replaced by the volume of $\Omega$, i.e, whether $\lambda(\Omega)\geq\alpha|\Omega|^{-\frac{2}{N}}$ for some universal constant $\alpha>0$. This question was answered in the affirmative in \cite{hoffmann2002geometrical}, see also \cite{Tidblom}, where it was shown that $\lambda(\Omega)\geq c|\Omega|^{-\frac{2}{N}}$ with $c=c(N)=\displaystyle\frac{N^{\frac{N-2}{N}}|\mathbb{S}^{N-1}|^{\frac{2}{N}}}{4}$.

Our next result extends this result to fractional settings. It reads as follows. 

\begin{thm}\label{Geom-convex-Hardy}
Let $\Omega\subset\R^N$ be a convex open set. Then
\begin{equation*}\label{}
\frac{c_{N,s}}{2}\iint_{\Omega\times\Omega}\frac{(u(x)-u(y))^2}{|x-y|^{N+2s}}\ dxdy\geq\mathfrak{h}_{N,s}\int_{\Omega}\frac{u^2}{\delta^{2s}}\ dx+a(N,s)|\Omega|^{-\frac{2s}{N}}\int_{\Omega}u^2\ dx,
\end{equation*}
for all $u\in H^s_0(\Omega)$, where $a(N,s)$ is given by
\begin{equation}\label{geometrichardyconstant0}
a(N,s)=\mathfrak{h}_{N,s}2^{-2s}(2^{2s}-1)\frac{\sqrt{\pi}\,\Gamma(\frac{N+2s}{2})}{\Gamma(\frac{N}{2})\Gamma(\frac{1+2s}{2})}\left(\frac{N}{|\mathbb{S}^{N-1}|}\right)^{-\frac{2s}{N}}.
\end{equation}
\end{thm}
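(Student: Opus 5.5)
We follow the one-dimensional reduction of Loss and Sloane \cite{loss2010hardy}, but keep track of a remainder term instead of discarding it. The starting point is their representation of the Gagliardo seminorm on a convex set through lines. For $w\in\mathbb{S}^{N-1}$ and $x\in\Omega$, set
$$d_w(x)=\inf\{|t|: x+tw\notin\Omega\},$$
and let $\Omega_{x,w}=\{t: x+tw\in\Omega\}$, which is an interval by convexity. The Loss--Sloane formula then reads
\begin{equation*}
\iint_{\Omega\times\Omega}\frac{(u(x)-u(y))^2}{|x-y|^{N+2s}}\,dxdy=\frac12\int_{\mathbb{S}^{N-1}}\int_{\{x\cdot w=0\}}\iint_{\Omega_{x,w}^2}\frac{(u(x+tw)-u(x+\tau w))^2}{|t-\tau|^{1+2s}}\,dt\,d\tau\,d\mathcal{H}^{N-1}(x)\,dw .
\end{equation*}

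The key step is a sharp one-dimensional Hardy inequality on a bounded interval $I=(a,b)$ with an extra $L^2$ term:
\begin{equation*}
\frac12\iint_{I\times I}\frac{(f(t)-f(\tau))^2}{|t-\tau|^{1+2s}}\,dt\,d\tau\ \ge\ \kappa_{1,2s}\int_I f^2\left(\frac{1}{(t-a)^{2s}}+\frac{1}{(b-t)^{2s}}\right)dt .
\end{equation*}
Loss and Sloane prove this with the weight $\frac{1}{(t-a)^{2s}}+\frac1{(b-t)^{2s}}$, which is larger than $\min(t-a,b-t)^{-2s}$. The excess over the distance weight $\mathrm{dist}(t,\partial I)^{-2s}$ is
$$\max(t-a,b-t)^{-2s}\ \ge\ |I|^{-2s}.$$
We would refine this by comparing $\frac{1}{(t-a)^{2s}}+\frac{1}{(b-t)^{2s}}$ with $\min^{-2s}$ more carefully. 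The factor $(2^{2s}-1)2^{-2s}$ in $a(N,s)$ suggests the bound
$$\frac{1}{(t-a)^{2s}}+\frac{1}{(b-t)^{2s}}\ \ge\ \frac{1}{\min(t-a,b-t)^{2s}}+(2^{2s}-1)\frac{2^{-2s}\cdot 4^{s}}{|I|^{2s}}$$
or a similar variant, and the exact elementary inequality will be fixed by matching constants. Plugging this bound into the line integral gives
$$[u]_s\ \ge\ \mathfrak{h}_{N,s}\int_\Omega \frac{u^2}{\delta^{2s}}\,dx+\mathrm{const}\int_\Omega u(x)^2\,\frac{1}{|\mathbb{S}^{N-1}|}\int_{\mathbb{S}^{N-1}}\frac{dw}{\ell_w(x)^{2s}}\,dx .$$
Here $\ell_w(x)$ is the length of the chord through $x$ in direction $w$. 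For the Hardy term we use that, after averaging over directions, $\int_{\mathbb{S}^{N-1}}d_w(x)^{-2s}\,dw\ge C\,\delta(x)^{-2s}$ with the Loss--Sloane constant; this is exactly how $\mathfrak{h}_{N,s}=c_{N,s}\kappa_{N,2s}$ arises from $\kappa_{1,2s}$ for convex sets, through the half-space computation.

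It remains to bound the remainder weight below by the volume. Since $\ell_w(x)\le d_w(x)+d_{-w}(x)\le 2\max(d_w(x),d_{-w}(x))$, we obtain
$$\int_{\mathbb{S}^{N-1}}\ell_w(x)^{-2s}\,dw\ \ge\ 2^{-2s}\int_{\mathbb{S}^{N-1}}\max(d_w,d_{-w})^{-2s}\,dw\ \ge\ 2^{-2s}\int_{\mathbb{S}^{N-1}} d_w(x)^{-2s}\,dw,$$
up to adjusting which of the two endpoint terms is kept. We then apply the standard rearrangement fact used by Hoffmann-Ostenhof--Hoffmann-Ostenhof--Laptev: among star-shaped sets about $x$ with given volume $|\Omega|\ge \frac{1}{N}\int_{\mathbb{S}^{N-1}}d_w(x)^N\,dw$, the integral $\int_{\mathbb{S}^{N-1}} d_w^{-2s}\,dw$ is minimized by the ball. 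Jensen's (or Hölder's) inequality gives
$$\int_{\mathbb{S}^{N-1}} d_w^{-2s}\,dw\ \ge\ |\mathbb{S}^{N-1}|\left(\frac{N|\Omega|}{|\mathbb{S}^{N-1}|}\right)^{-2s/N},$$
which produces the factor $\left(\frac{N}{|\mathbb{S}^{N-1}|}\right)^{-2s/N}|\Omega|^{-2s/N}$. The Gamma-function ratio $\frac{\sqrt\pi\,\Gamma(\frac{N+2s}{2})}{\Gamma(\frac N2)\Gamma(\frac{1+2s}{2})}$ is the usual ratio between the $N$-dimensional and one-dimensional normalizations, $\int_{\mathbb{S}^{N-1}}|w_N|^{2s}\,dw$ versus $|\mathbb{S}^{N-1}|$, which appears in converting $\kappa_{1,2s}$ to $\kappa_{N,2s}$.

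The main obstacle is the first half of the plan. Both the Hardy term and the $L^2$ remainder come from the same one-dimensional inequality, and we must split the one-dimensional weight so that the Hardy part still reproduces exactly $\mathfrak{h}_{N,s}\delta^{-2s}$ after averaging. Loss and Sloane obtain this by bounding $d_w(x)\le \delta(x)/|w\cdot\nu|$ for the nearest boundary normal $\nu$, using only the nearer endpoint of each chord. We would therefore keep the nearer-endpoint term for the Hardy part and use the farther-endpoint term $\max(d_w,d_{-w})^{-2s}\ge \ell_w^{-2s}$ for the remainder. The factor $2^{-2s}(2^{2s}-1)$ likely comes from a sharper one-dimensional statement in which the weight is $\kappa_{1,2s}\,\mathrm{dist}^{-2s}$ plus a remainder proportional to $(2^{2s}-1)|I|^{-2s}$; proving that sharpened interval inequality is where we expect to spend the effort.
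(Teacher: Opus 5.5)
Your architecture is the paper's: Loss--Sloane line decomposition, a one-dimensional inequality with a two-endpoint weight, the nearer endpoint for the Hardy term, the farther endpoint for the remainder, and Jensen/H\"older to reach the volume. However, the step that produces the factor $(2^{2s}-1)$ in $a(N,s)$ is missing, and the bound you propose for it is false.

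\textbf{The one-dimensional step.} Loss--Sloane's interval inequality carries the weight $\left(\frac{1}{t-a}+\frac{1}{b-t}\right)^{2s}$, which is what the paper's $M_{2s}$ encodes. This is strictly larger than $(t-a)^{-2s}+(b-t)^{-2s}$, the weight you stated. With your weight the excess over $\min(t-a,b-t)^{-2s}$ is $\max(t-a,b-t)^{-2s}$, and its infimum over $t\in I$ is exactly $|I|^{-2s}$, approached as $t\to a$ or $t\to b$. Since $2^{-2s}4^s=1$, your suggested bound reads $\max^{-2s}\ge(2^{2s}-1)|I|^{-2s}$. This fails near the endpoints because $2^{2s}-1>1$ for $s>\frac12$, and no pointwise variant with your weight can give a factor larger than $1$. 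That route yields at best $a(N,s)/(2^{2s}-1)$.

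The missing ingredient is Lemma \ref{alg-inq}: $(A+B)^\theta\ge A^\theta+(2^\theta-1)B^\theta$ for $A\ge B>0$ and $1<\theta\le 2$. Apply it to the correct Loss--Sloane weight with $A=1/\min(t-a,b-t)$, $B=1/\max(t-a,b-t)$, $\theta=2s$. Together with $\max\le|I|$, this gives immediately the ``sharpened interval inequality'' you expected to spend effort on, with remainder $(2^{2s}-1)|I|^{-2s}$.

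\textbf{The volume step.} Your chain $\int\ell_w^{-2s}\ge 2^{-2s}\int\max(d_w,d_{-w})^{-2s}\ge 2^{-2s}\int d_w^{-2s}$ fails under either reading of $d_w$.
\begin{itemize}
\item With your definition, $d_w=d_{-w}$ is the two-sided distance. Then $\ell_w\ge 2d_w$, so $\ell_w\le d_w+d_{-w}$ is false and $\ell_w^{-2s}\le 2^{-2s}d_w^{-2s}$.
\item If $d_{\pm w}$ means the one-sided distances $\tau_{\pm w}$, then $\max(\tau_w,\tau_{-w})^{-2s}\ge\tau_w^{-2s}$ is reversed. Indeed, convexity of $t\mapsto t^{-2s}$ and the symmetry $w\mapsto -w$ give $\int\ell_w^{-2s}\,d\omega\le 2^{-2s}\int\tau_w^{-2s}\,d\omega$.
\end{itemize}
The correct argument applies Jensen directly to the chord length: $\int\ell_w^{-2s}\,d\omega\ge\left(\int\ell_w\,d\omega\right)^{-2s}$. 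Then H\"older gives $\int\ell_w\,d\omega=2\int\tau_w\,d\omega\le 2\left(\int\tau_w^N\,d\omega\right)^{1/N}=2\left(N|\Omega|/|\mathbb{S}^{N-1}|\right)^{1/N}$, using the normalized measure $d\omega$ and the fact that a convex set is star-shaped about each of its points. This is the source of the factor $2^{-2s}$.

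With these two corrections, the $(2^{2s}-1)$ from Lemma \ref{alg-inq} and the Gamma ratio from converting $\kappa_{1,2s}$ into $\kappa_{N,2s}$ give exactly $a(N,s)$. Your argument then coincides with the paper's: Loss--Sloane with $M_{2s}$, Lemma \ref{alg-inq}, Lemma \ref{D-lower-estimate}, and $m_{2s}\le\delta$, $\Omega_x=\Omega$ for convex $\Omega$.
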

The proof of Theorem \ref{Geom-convex-Hardy} is a consequence of the following geometric improvement of the fractional Hardy inequality, see Section \ref{section:proof of second main result}.
\begin{thm}\label{Geom-Hardy} 
Let $\Omega\subset\R^N$ be an open set. Then
\begin{equation*}\label{}
\frac{c_{N,s}}{2}\iint_{\Omega\times\Omega}\frac{(u(x)-u(y))^2}{|x-y|^{N+2s}}\ dxdy\geq\mathfrak{h}_{N,s}\int_{\Omega}\frac{u^2}{m_{2s}^{2s}}\ dx+a(N,s)\int_{\Omega}\frac{u^2}{|\Omega_x|^{\frac{2s}{N}}}\ dx,
\end{equation*}
for all $u\in H^s_0(\Omega)$.
\end{thm}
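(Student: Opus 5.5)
The plan is to follow the Loss--Sloane method of integrating over lines. I assume the notation introduced just before the statement is the standard one. For $x\in\Omega$ and $w\in\mathbb{S}^{N-1}$, let $d_{\pm,w}(x)=\inf\{t>0: x\pm tw\notin\Omega\}$ and $d_w(x)=\min(d_{+,w},d_{-,w})$. The mean distance $m_{2s}$ is defined by $m_{2s}(x)^{-2s}=\frac{1}{C_{N,s}}\int_{\mathbb{S}^{N-1}} d_w(x)^{-2s}\,dw$ with $C_{N,s}=\int_{\mathbb{S}^{N-1}}|w_N|^{2s}\,dw$. Finally, $\Omega_x=\{x+tw: 0\le t<d_{+,w}(x)\}$ is the part of $\Omega$ star-shaped with respect to $x$, so that
\begin{equation*}
|\Omega_x|=\frac1N\int_{\mathbb{S}^{N-1}} d_{+,w}(x)^N\,dw .
\end{equation*}

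\textbf{Step 1 (reduction to one dimension).} I will write the Gagliardo seminorm in polar-type coordinates. This gives
\begin{equation*}
\iint_{\Omega\times\Omega}\frac{(u(x)-u(y))^2}{|x-y|^{N+2s}}\,dx\,dy=\frac12\int_{\mathbb{S}^{N-1}}\int_{w^\perp}\iint_{\ell\cap\Omega}\frac{(u(x)-u(y))^2}{|x-y|^{1+2s}}\,dx\,dy\,d\mathcal H^{N-1}\,dw,
\end{equation*}
where $\ell$ runs over the lines $\{z+tw\}$ with $z\in w^\perp$. Each set $\ell\cap\Omega$ is a disjoint union of intervals. Dropping the interactions between different intervals only decreases the left-hand side, so it suffices to prove a one-dimensional inequality on a single interval $I=(a,b)$.

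\textbf{Step 2 (improved 1D Hardy inequality).} The inequality I would establish is
\begin{equation*}
\frac12\iint_{I\times I}\frac{(v(t)-v(\tau))^2}{|t-\tau|^{1+2s}}\,dt\,d\tau\ \ge\ \kappa_{1,2s}\int_I v^2\Big(\frac1{t-a}+\frac1{b-t}\Big)^{2s}dt .
\end{equation*}
This is the bound Loss and Sloane obtain on intervals by the ground-state representation with the weight $((t-a)(b-t))^{(2s-1)/2}$, and I would quote or reprove it. Next, with $\alpha=t-a$ and $\beta=b-t$, I will use the elementary inequality
\begin{equation*}
\Big(\frac1\alpha+\frac1\beta\Big)^{2s}\ \ge\ \min(\alpha,\beta)^{-2s}+(2^{2s}-1)\Big(\frac{2}{\alpha+\beta}\Big)^{2s}\ \ge\ \min(\alpha,\beta)^{-2s}+(2^{2s}-1)\,2^{-2s}\max(\alpha,\beta)^{-2s}.
\end{equation*}
It is an equality at $\alpha=\beta$. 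In general it should follow by normalizing $\alpha=1\le\beta$ and checking monotonicity of a single-variable function. This step is the source of the factor $2^{-2s}(2^{2s}-1)$ in \eqref{geometrichardyconstant0}. Notice that $\max(\alpha,\beta)^{-2s}\ge$ the average of $d_{+,w}^{-2s}$ and $d_{-,w}^{-2s}$ is false in general, so I will instead keep the term $d_{+,w}(x)^{-2s}$ after symmetrizing in $w\leftrightarrow -w$. Summing over $\pm w$ allows one to bound the resulting term from below by a multiple of $d_{+,w}^{-2s}+d_{-,w}^{-2s}$ restricted to the farther endpoint; I expect some care with the constant here.

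\textbf{Step 3 (integration over directions).} Reassembling via Step 1 and the normalization $c_{N,s}\kappa_{N,2s}=\mathfrak h_{N,s}$, the $d_w^{-2s}$ terms produce exactly $\mathfrak h_{N,s}\int_\Omega u^2 m_{2s}^{-2s}$, as in Loss--Sloane. The Gamma-factor ratio in $a(N,s)$ arises from converting $\kappa_{1,2s}$ to $\kappa_{N,2s}$ through $C_{N,s}$. For the remainder, I will apply Jensen's inequality to the convex function $r\mapsto r^{-2s/N}$ with $r=d_{+,w}^N$:
\begin{equation*}
\frac{1}{|\mathbb{S}^{N-1}|}\int_{\mathbb{S}^{N-1}} d_{+,w}^{-2s}\,dw\ \ge\Big(\frac{1}{|\mathbb{S}^{N-1}|}\int_{\mathbb{S}^{N-1}} d_{+,w}^N\,dw\Big)^{-2s/N}=\Big(\frac{N|\Omega_x|}{|\mathbb{S}^{N-1}|}\Big)^{-2s/N}.
\end{equation*}
This yields the factor $(N/|\mathbb{S}^{N-1}|)^{-2s/N}|\Omega_x|^{-2s/N}$. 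Finally, Theorem \ref{Geom-convex-Hardy} then follows at once, since for convex $\Omega$ one has $m_{2s}\le\delta$ and $\Omega_x=\Omega$.

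\textbf{Main obstacle.} I expect the main obstacle to be Step 2: obtaining the sharp 1D remainder, and matching it to the directional quantity $d_{+,w}$ that defines $|\Omega_x|$ rather than to $\max(d_+,d_-)$. If the $\pm w$ symmetrization loses a factor, I would first try to write the 1D remainder directly as $(2^{2s}-1)2^{-2s}\cdot\frac12\big(d_+^{-2s}+d_-^{-2s}\big)$ minus an admissible error, checking on the interval's midpoint that this is tight.
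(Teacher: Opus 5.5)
Your architecture matches the paper's. The paper quotes the Loss--Sloane inequality with weight $M_{2s}^{-2s}$, which is what your Steps 1--2 rederive, then splits the one-dimensional weight algebraically and finishes with Jensen/H\"older on the sphere. However, the two steps that produce the constant $a(N,s)$ fail as written.

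\textbf{Step 2.} Your intermediate inequality
\[
\Big(\frac1\alpha+\frac1\beta\Big)^{2s}\ge\min(\alpha,\beta)^{-2s}+(2^{2s}-1)\Big(\frac{2}{\alpha+\beta}\Big)^{2s}
\]
is false. Normalize $\alpha=1\le\beta$: the difference of the two sides vanishes at $\beta=1$ but has derivative $-s<0$ there. Concretely, for $s=\frac34$, $\alpha=1$, $\beta=2$, the left side is $(3/2)^{3/2}\approx1.84$ and the right side is $1+(2^{3/2}-1)(2/3)^{3/2}\approx 2.00$.

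What is true is the paper's Lemma \ref{alg-inq}, which follows from monotonicity of $t\mapsto(t+1)^{2s}-t^{2s}$ at $t=\max/\min\ge1$:
\[
\Big(\frac1\alpha+\frac1\beta\Big)^{2s}\ge\min(\alpha,\beta)^{-2s}+(2^{2s}-1)\max(\alpha,\beta)^{-2s}.
\]
There is no factor $2^{-2s}$ here. The $2^{-2s}$ in \eqref{geometrichardyconstant0} does not originate in this step, and inserting it here costs a factor you cannot recover later.

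\textbf{Step 3.} This is where the missing idea lies. The remainder coming from the interval through $x$ is $\max(d_{+,w},d_{-,w})^{-2s}$. This is already even in $w$, so symmetrizing $w\leftrightarrow -w$ gains nothing. Your Jensen bound concerns $\int d_{+,w}^{-2s}\,d\omega$, which does not help, since $\max^{-2s}\le d_{+,w}^{-2s}$.

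In fact $\int\max^{-2s}d\omega\ge(\int d_{+,w}^N d\omega)^{-2s/N}$ fails in general. Take $d_{+,w}=L$ on a hemisphere and $\varepsilon$ on the opposite one, essentially the situation for $x$ near the flat face of a half-ball. Then the left side is $L^{-2s}$, while the right side tends to $2^{2s/N}L^{-2s}$. So with your coefficient $(2^{2s}-1)2^{-2s}$ you cannot reach $a(N,s)$.

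What you need is an upper bound on $\max$ whose spherical average is controlled by $|\Omega_x|$. The paper does this as follows:
\begin{enumerate}
\item use $\max\le d_{+,w}+d_{-,w}$;
\item apply Jensen to $r\mapsto r^{-2s}$;
\item use $\int d_{-,w}\,d\omega=\int d_{+,w}\,d\omega$;
\item apply H\"older, $\int d_{+,w}\,d\omega\le(\int d_{+,w}^N d\omega)^{1/N}$.
\end{enumerate}
This gives $\int\max^{-2s}d\omega\ge 2^{-2s}(N|\Omega_x|/|\mathbb{S}^{N-1}|)^{-2s/N}$, and this is where the $2^{-2s}$ comes from.

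Alternatively, keep your own Jensen for $r\mapsto r^{-2s/N}$ but apply it to $r=\max^N\le d_{+,w}^N+d_{-,w}^N$. This gives the even better factor $2^{-2s/N}$. Combined with the corrected Step 2, either route proves the theorem with a constant at least $a(N,s)$.

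Working with the component interval of $\ell\cap\Omega$ containing $x$, as you do, is actually the right setup for general open sets. The paper's $\mathfrak{D}_\nu$ is a supremum over the whole line, and its claimed bound $\mathfrak{D}_\nu\le D_\nu$ fails when that line section is disconnected. For example, in the annulus $1<|y|<3$ with $x=(2,0)$ and $\nu=(-1,0)$, one has $\mathfrak{D}_\nu=5>2=D_\nu$. By contrast, $\max(d_{+,w},d_{-,w})\le d_{+,w}+d_{-,w}$ always holds.
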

Here $m_{2s}(x)$ denotes the fractional version of Davies's distance and $\Omega_x$ is the set of points $z$ in $\Omega$ such that the segment $\left[x,z\right]$ is contained in $\Omega$, see Section \ref{section:proof of second main result} for more details.

Furthermore, for bounded convex sets, one can improve the result in Theorem \ref{Geom-convex-Hardy}.

\begin{thm}\label{Geom-convex-Hardy-bndd}
Let $\Omega\subset\R^N$ be a bounded convex open set. Then
\begin{equation*}\label{}
\frac{c_{N,s}}{2}\iint_{\Omega\times\Omega}\frac{(u(x)-u(y))^2}{|x-y|^{N+2s}}\ dxdy\geq\mathfrak{h}_{N,s}\int_{\Omega}\frac{u^2}{\delta^{2s}}\ dx+\mathfrak{C}(\Omega,N,s)\int_{\Omega}u^2\ dx,
\end{equation*}
for all $u\in H^s_0(\Omega)$, where
\begin{equation}\label{geometrichardyconstant}
\mathfrak{C}(\Omega,N,s)=\left(a(N,s)+b(N,s)~\frac{|\Omega|^{\frac 1N}}{\diam(\Omega)}\right) |\Omega|^{-\frac{2s}{N}},
\end{equation}
and
\begin{equation}
b(N,s)=2^{2s-1}(4-2^{3-2s})\frac{\Gamma(\frac{N+2s}{2})}
{\Gamma(\frac{N}{2})\Gamma(1-s)}\left(\frac{N}{|\mathbb{S}^{N-1}|}\right)^{-\frac{2s-1}{N}}.
\end{equation}
\end{thm}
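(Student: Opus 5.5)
We prove Theorem \ref{Geom-convex-Hardy-bndd} by going back to the mechanism behind Theorem \ref{Geom-Hardy} and Theorem \ref{Geom-convex-Hardy}, and keeping a term that was previously thrown away. The fractional Hardy argument in the Loss--Sloane style (and presumably behind Theorem \ref{Geom-Hardy}) reduces to one-dimensional Hardy inequalities on the chords of $\Omega$. For each direction $\omega\in\mathbb{S}^{N-1}$ and each line $\ell$ in direction $\omega$, $\ell\cap\Omega$ is an interval $(a,b)$ because $\Omega$ is convex. On this interval we use a sharp one-dimensional inequality of the form
\begin{equation*}
\iint_{(a,b)^2}\frac{(f(t)-f(\tau))^2}{|t-\tau|^{1+2s}}\,dt\,d\tau\geq \kappa_{1,2s}\int_a^b f^2\Bigl(\frac{1}{(t-a)^{2s}}+\frac{1}{(b-t)^{2s}}\Bigr)dt+R_{a,b}(f),
\end{equation*}
where $R_{a,b}(f)\geq 0$ is a remainder.

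In the bounded case $R_{a,b}(f)$ can be made quantitative. Writing $d_\omega(x)=\min(t-a,b-t)$, the difference between the full interval weight $(t-a)^{-2s}+(b-t)^{-2s}$ and the half-line weight $d_\omega^{-2s}$ is at least $(b-a)^{-2s}$-type. There is also the extra boundary term coming from the finite length $L=b-a$. Computing the ground-state remainder for $w(t)=((t-a)(b-t))^{(2s-1)/2}$, one extracts an additional term $\gtrsim L^{-1}d_\omega(x)^{1-2s}$.

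Integrating over lines and directions, with the standard identity $\int_{\mathbb{S}^{N-1}}|\omega_N|^{2s}d\omega$ converting to $c_{N,s}$ constants, gives three contributions:
\begin{enumerate}
\item the term $\mathfrak{h}_{N,s}\int u^2 m_{2s}^{-2s}\geq\mathfrak{h}_{N,s}\int u^2\delta^{-2s}$, using convexity, since $d_\omega(x)\geq\delta(x)$ for convex sets;
\item the term $a(N,s)\int u^2|\Omega_x|^{-2s/N}$ exactly as in Theorem \ref{Geom-Hardy}, with $\Omega_x=\Omega$ for convex $\Omega$;
\item the new term $\int u^2\int_{\mathbb{S}^{N-1}}L_\omega(x)^{-1}d_\omega(x)^{1-2s}\,d\omega$.
\end{enumerate}
For the third term, bound $L_\omega(x)\leq\diam(\Omega)$. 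Then estimate $\int_{\mathbb{S}^{N-1}}d_\omega(x)^{1-2s}d\omega$ from below with the same rearrangement/Jensen argument used for $a(N,s)$. Since $1-2s<0$, Jensen for the convex function $r\mapsto r^{-(2s-1)}$ together with $\int_{\mathbb{S}^{N-1}}d_\omega^N\,d\omega\leq N|\Omega|$ (polar coordinates) gives
\begin{equation*}
\int_{\mathbb{S}^{N-1}}d_\omega^{1-2s}\,d\omega\geq|\mathbb{S}^{N-1}|\left(\frac{N|\Omega|}{|\mathbb{S}^{N-1}|}\right)^{-\frac{2s-1}{N}}.
\end{equation*}
This produces $b(N,s)\,\diam(\Omega)^{-1}|\Omega|^{-\frac{2s-1}{N}}=b(N,s)\frac{|\Omega|^{1/N}}{\diam\Omega}|\Omega|^{-\frac{2s}{N}}$, matching \eqref{geometrichardyconstant}. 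Summing the three contributions yields the claimed inequality.

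The main obstacle is the sharp one-dimensional improvement on a bounded interval: identifying the explicit remainder with constant $2^{2s-1}(4-2^{3-2s})/\Gamma(1-s)$. We would compute $(-\Delta)^s_{(a,b)}w/w$ for $w=((t-a)(b-t))^{(2s-1)/2}$ and subtract the half-line potentials. Explicit Beta-function integrals should show that the excess is at least a positive multiple of $L^{-1}d^{1-2s}$, and the factor $4-2^{3-2s}$ suggests evaluating the worst case at the midpoint $d=L/2$. Checking positivity of this excess uniformly in $t$ is where we expect the real work to lie.
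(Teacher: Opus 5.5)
\textbf{Verdict: genuine gap.} Your architecture is the one the paper uses, but the decisive one-dimensional inequality is left unproven.

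The architecture is line decomposition, a one-dimensional inequality with remainder on each chord, then averaging over directions with a Jensen bound. The direction-averaging part is sound. Take the remainder in the form $L_\omega^{-1}d_\omega^{1-2s}$, use $L_\omega\le\diam(\Omega)$, and apply Jensen with $\int_{\mathbb{S}^{N-1}}d_\omega^N\,d\sigma\le\int_{\mathbb{S}^{N-1}}\tau_\omega^N\,d\sigma=N|\Omega_x|\le N|\Omega|$. This gives the same number as the paper's route through $\frac{1}{d_\nu}+\frac{1}{\mathfrak{D}_\nu}\ge\frac{2}{\mathfrak{D}_\nu}$ and Lemma \ref{D-lower-estimate} with exponent $2s-1$.

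One justification is backwards, though. In item (1), $d_\omega(x)\ge\delta(x)$ holds for every open set and only bounds $m_{2s}^{-2s}$ from above. What gives $m_{2s}\le\delta$ is the supporting-hyperplane estimate $d_\omega(x)\,|\cos(e,\omega)|\le\delta(x)$ for convex $\Omega$, with $e$ pointing to a nearest boundary point. Integrating it against the normalization $\int_{\mathbb{S}^{N-1}}|\cos(e,\omega)|^{2s}\,d\omega$ gives the bound, as in \eqref{m_d_geom}.

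The gap is the interval inequality itself, which you explicitly leave open and on whose exact constant all of $b(N,s)$ depends. The paper does not prove it. It quotes Dyda's inequality \eqref{dyda-Hardy}, which is what you get by averaging over lines and directions (and using $b-a\le\diam(\Omega)$) the interval inequality
\begin{equation*}
\frac12\int_a^b\!\!\int_a^b\frac{(u(t)-u(\tau))^2}{|t-\tau|^{1+2s}}\,dt\,d\tau\ge\kappa_{1,2s}\int_a^b u^2\Bigl(\frac{1}{t-a}+\frac{1}{b-t}\Bigr)^{2s}dt+\frac{4-2^{3-2s}}{2s\,(b-a)}\int_a^b u^2\Bigl(\frac{1}{t-a}+\frac{1}{b-t}\Bigr)^{2s-1}dt .
\end{equation*}
So the interval computation must produce exactly $\frac{4-2^{3-2s}}{2s}$. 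The factors $2^{2s-1}$ and $\Gamma(1-s)$ in $b(N,s)$ come from $c_{N,s}$ and the spherical averaging, not from the interval. A remainder ``$\gtrsim L^{-1}d^{1-2s}$'' with an unspecified constant proves the theorem only with some unspecified $b$.

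There is a second problem with the inequality you display. Its weight $(t-a)^{-2s}+(b-t)^{-2s}$ is, for $2s>1$, strictly smaller than the Loss--Sloane weight $\bigl(\frac{1}{t-a}+\frac{1}{b-t}\bigr)^{2s}$. With it, the excess over $d_\omega^{-2s}$ is only $D^{-2s}$, not the $(2^{2s}-1)D^{-2s}$ that Lemma \ref{alg-inq} extracts, so item (2) would not reproduce $a(N,s)$. You also cannot obtain both terms by adding Theorem \ref{Geom-Hardy} to a separate remainder estimate, since each bounds the same seminorm from below.

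What you need is a single interval inequality carrying the full Loss--Sloane weight and the remainder simultaneously. Either cite it, as the paper does, or carry out your ground-state computation in full. With $u=wv$ and $w=((t-a)(b-t))^{(2s-1)/2}$, it reduces to a pointwise lower bound for $\mathcal{L}w/w$ by the two weights above, uniformly on $(a,b)$, where $\mathcal{L}w(t)=\mathrm{P.V.}\int_a^b\frac{w(t)-w(\tau)}{|t-\tau|^{1+2s}}\,d\tau$.
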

Moreover, the same arguments yield the following result that improves  Theorem 3 in \cite{dyda remder}. 

\begin{thm}\label{Geom-convex-Hardy-bndd+}
Let $\Omega\subset\R^N$ be a bounded convex open set. Then
\begin{equation*}\label{}
\frac{c_{N,s}}{2}\iint_{\Omega\times\Omega}\frac{(u(x)-u(y))^2}{|x-y|^{N+2s}}\ dxdy\geq\mathfrak{h}_{N,s}\int_{\Omega}\frac{u^2}{\delta^{2s}}\ dx+a(N,s)|\Omega|^{-\frac{2s}{N}}\int_{\Omega}u^2\ dx+\mathcal{K}\int_{\Omega}\frac{u^2}{\delta^{2s-1}}\ dx,
\end{equation*}
for all $u\in H^s_0(\Omega)$, where
\begin{equation}
\mathcal{K}=\mathcal{K}(\Omega,N,s)=2^{2s-1}(4-2^{3-2s})\frac{\Gamma(s)\Gamma(\frac{N+2s}{2})}
{\Gamma(1-s)\Gamma(\frac{N+2s-1}{2})}\diam(\Omega)^{-1}.
\end{equation}
\end{thm}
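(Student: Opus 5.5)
This inequality will be derived from the pointwise mechanism behind Theorem \ref{Geom-Hardy}, in the variant used for Theorem \ref{Geom-convex-Hardy-bndd}, keeping one additional one-dimensional remainder instead of bounding it below by a constant. The first step is the standard reduction via Loss--Sloane. Write the Gagliardo seminorm in polar form, $|x-y|^{-N-2s}dxdy$ as an integral over directions $\omega\in\mathbb S^{N-1}$ and over lines parallel to $\omega$, so that
$$[u]_s=\frac{c_{N,s}}{4}\int_{\mathbb S^{N-1}}\int_{\omega^\perp}\iint_{\ell\cap\Omega\times\ell\cap\Omega}\frac{(u(a)-u(b))^2}{|a-b|^{1+2s}}\,da\,db\,d\mathcal H^{N-1}\,d\omega .$$
Because $\Omega$ is convex, each $\ell\cap\Omega$ is a single interval $(\alpha,\beta)$. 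On an interval I will use the sharpened one-dimensional fractional Hardy inequality with remainder, as in \cite{dyda remder} and \cite{loss2010hardy}:
$$\frac12\iint_{(\alpha,\beta)^2}\frac{(f(a)-f(b))^2}{|a-b|^{1+2s}}\ge \kappa_{1,2s}\int_\alpha^\beta f^2\Big(\frac1{d_\omega^{2s}}\Big)+C_1\int_\alpha^\beta \frac{f^2}{(\beta-\alpha)\, d_\omega^{2s-1}},\qquad d_\omega(t)=\min(t-\alpha,\beta-t).$$
More precisely, the main term should be taken in the form $\kappa(\frac1{(t-\alpha)^{2s}}+\frac1{(\beta-t)^{2s}})$, which yields both the $a$-type term (through $(t-\alpha)^{-2s}+(\beta-t)^{-2s}\ge$ combinations involving $2^{2s}$) and the $2^{2s-1}(4-2^{3-2s})$ factor. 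Reintegrating over lines and directions then gives, pointwise in $x$, the weight $\int_{\mathbb S^{N-1}} d_\omega(x)^{-2s}d\omega$ plus a remainder $\int_{\mathbb S^{N-1}}\frac{d\omega}{|\ell_{x,\omega}\cap\Omega|\,d_\omega(x)^{2s-1}}$.

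The second step splits the first weight exactly as in Theorem \ref{Geom-convex-Hardy}. The part corresponding to the half-line Hardy term, $\kappa\int d\omega/d_\omega^{2s}$ with $d_\omega\le$ distance to the boundary along the ray, gives $\mathfrak h_{N,s}m_{2s}^{-2s}\ge\mathfrak h_{N,s}\delta^{-2s}$, using $m_{2s}\le\delta$ for convex sets (a supporting hyperplane argument). The excess part gives $a(N,s)|\Omega_x|^{-2s/N}=a(N,s)|\Omega|^{-2s/N}$, since $\Omega_x=\Omega$ for convex $\Omega$. Both of these follow Theorem \ref{Geom-Hardy}, which I may assume. That leaves only the new term to handle.

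The third step, which I expect to be the main obstacle, is to lower-bound the remainder by $\mathcal K\,\delta(x)^{1-2s}$. I will use $|\ell\cap\Omega|\le\diam(\Omega)$, and then bound $\int_{\mathbb S^{N-1}}d_\omega(x)^{1-2s}d\omega$ from below. Since $1-2s<0$, a larger $d_\omega$ hurts, so I need an upper bound on $d_\omega$. Let $\nu$ be the normal to a supporting hyperplane at a point nearest to $x$. Convexity gives $d_\omega(x)\le \delta(x)/|\omega\cdot\nu|$, so
$$\int_{\mathbb S^{N-1}}d_\omega^{1-2s}\,d\omega\ge\delta^{1-2s}\int_{\mathbb S^{N-1}}|\omega_N|^{2s-1}d\omega=\delta^{1-2s}\,\frac{2\pi^{\frac{N-1}{2}}\Gamma(s)}{\Gamma(\frac{N+2s-1}{2})}.$$
This explains the ratio $\Gamma(s)/\Gamma(\frac{N+2s-1}{2})$ in $\mathcal K$. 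The remaining work is bookkeeping: combine this beta-integral with $c_{N,s}/4$ and the one-dimensional constant $C_1$. The delicate point is to make sure that the one-dimensional remainder genuinely carries the factor $(4-2^{3-2s})/(\beta-\alpha)$. To get it I would first try a symmetric decomposition $(t-\alpha)^{-2s}+(\beta-t)^{-2s}-d_\omega^{-2s}\ge(2^{2s}-2)\ldots$. I would also check that the same splitting is not double-counted with the $a(N,s)$ term. If it is, I will split the one-dimensional excess convexly between the two contributions.
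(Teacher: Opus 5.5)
Your outline follows the paper's route. The paper quotes the already-sliced form \eqref{dyda-Hardy} of Dyda's one-dimensional inequality with remainder. It then bounds $M_{2s}^{-2s}$ from below by $\delta^{-2s}$ plus the $a(N,s)$ term via Lemmas \ref{alg-inq} and \ref{D-lower-estimate}. Finally it uses $M_{2s-1}^{-(2s-1)}\ge m_{2s-1}^{-(2s-1)}\ge\delta^{-(2s-1)}$, which is your supporting-hyperplane computation; the factor $1/|\ell\cap\Omega|\ge 1/\diam(\Omega)$ is already inside \eqref{dyda-Hardy}.

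However, the one-dimensional input you write down is not the right one, and the ways you propose to recover the constants from it fail. The Loss--Sloane/Dyda main weight on a chord $(\alpha,\beta)$ is $\kappa_{1,2s}\bigl(\frac{1}{t-\alpha}+\frac{1}{\beta-t}\bigr)^{2s}$, a power of the sum, not $\frac{1}{(t-\alpha)^{2s}}+\frac{1}{(\beta-t)^{2s}}$. With your sum of powers, the excess over $d_\omega^{-2s}$ is exactly $\mathfrak{D}_\omega^{-2s}$. Lemma \ref{D-lower-estimate} then gives only $a(N,s)/(2^{2s}-1)$: the factor $2^{2s}-1$ in $a(N,s)$ comes precisely from Lemma \ref{alg-inq} applied to $(A+B)^{2s}$ with $A=1/d_\omega\ge B=1/\mathfrak{D}_\omega$. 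Note also that Theorem \ref{Geom-Hardy} cannot be used as a black box here, since two separate lower bounds for $[u]_s$ cannot be added. You need its pointwise weight estimate inside a single inequality that also carries the remainder.

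The remainder cannot be manufactured from the excess either. With your weight, $\mathfrak{D}_\omega\ge(\beta-\alpha)/2$ makes the excess bounded along each chord, whereas $(\beta-\alpha)^{-1}d_\omega^{1-2s}$ blows up at the endpoints. With the correct weight, Lemma \ref{alg-inq} is an equality at $A=B$, so no $A^{2s-1}B$ term can be added on top of it direction by direction. Splitting the excess convexly, as in your fallback, yields only $\theta\,a(N,s)$ together with a fraction $1-\theta$ of a remainder, which is not the stated inequality.

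The missing point is that Dyda's theorem supplies the remainder $\frac{4-2^{3-2s}}{2s(\beta-\alpha)}\bigl(\frac{1}{t-\alpha}+\frac{1}{\beta-t}\bigr)^{2s-1}$ \emph{in addition to} the full Loss--Sloane term. Used in that form, nothing is double counted. After slicing, the first term is treated exactly as in the proof of Theorem \ref{Geom-Hardy}. The second is handled through $\bigl(\frac{1}{d_\omega}+\frac{1}{\mathfrak{D}_\omega}\bigr)^{2s-1}\ge d_\omega^{1-2s}$, $\beta-\alpha\le\diam(\Omega)$ and your hyperplane bound. In particular, the factor $2^{2s-1}(4-2^{3-2s})$ in $\mathcal K$ comes from $c_{N,s}\cdot\frac{4-2^{3-2s}}{2s}$, not from the main term.

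A warning for the bookkeeping: with this constant your computation gives $c_{N,s}\,\frac{4-2^{3-2s}}{2s}\,\frac{\pi^{\frac{N-1}{2}}\Gamma(s)}{\Gamma(\frac{N+2s-1}{2})}\,\diam(\Omega)^{-1}=\pi^{-1/2}\mathcal K$. This is also exactly what the paper's own step from \eqref{dyda-Hardy} via $M_{2s-1}^{-(2s-1)}\ge\delta^{-(2s-1)}$ produces. So the printed $\mathcal K$ appears to carry a spurious factor $\sqrt{\pi}$; in $b(N,s)$ that factor is absorbed by the normalization of $M_{2s-1}$, but here it is not. You should not expect to reach the printed constant exactly.
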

As a consequence of Theorem \ref{Geom-convex-Hardy}, we obtain a lower bound for $\lambda^*(s,\Omega)$. More precisely, the following holds.
\begin{thm}\label{second-main-result}
Let $\Omega$ be an open convex set. Then 
\begin{equation}\label{second-main-result-convex}
\lambda^*(s,\Omega)\geq a(N,s)|\Omega|^{-\frac{2s}{N}}.
\end{equation}
\end{thm}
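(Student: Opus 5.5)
Theorem \ref{second-main-result} follows directly from Theorem \ref{Geom-convex-Hardy} and the variational characterization \eqref{a9} of $\lambda^*(s,\Omega)$. Since $\lambda^*(s,\Omega)$ is defined as an infimum over $u\in H^s_0(\Omega)$ of a Rayleigh-type quotient, it suffices to bound the numerator of that quotient from below pointwise in $u$.

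The steps are as follows. Fix $u\in H^s_0(\Omega)$ with $u\not\equiv 0$. Theorem \ref{Geom-convex-Hardy} applies because $\Omega$ is convex, and it gives
\begin{equation*}
\frac{c_{N,s}}{2}\iint_{\Omega\times\Omega}\frac{(u(x)-u(y))^2}{|x-y|^{N+2s}}\,dxdy-\mathfrak{h}_{N,s}\int_{\Omega}\frac{u^2}{\delta^{2s}}\,dx\geq a(N,s)|\Omega|^{-\frac{2s}{N}}\int_{\Omega}u^2\,dx .
\end{equation*}
Dividing by $\int_\Omega u^2\,dx>0$ and taking the infimum over $u$ in \eqref{a9} yields \eqref{second-main-result-convex}.

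Some bookkeeping is needed about the setting. If $|\Omega|=\infty$ (an unbounded convex set), the right-hand side of \eqref{second-main-result-convex} is $0$. In that case the claim reduces to $\lambda^*\geq 0$, which is exactly the fractional Hardy inequality with constant $\mathfrak{h}_{N,s}$ on convex sets (Loss--Sloane \eqref{a4}), and it is still covered by the same display. For bounded convex $\Omega$, the set is Lipschitz, so as remarked after \eqref{a9} the quantity $\lambda^*(s,\Omega)$ is well-defined and both terms in the numerator are finite. If $\Omega$ is moreover $C^{1,1}$, this $\lambda^*$ coincides with the threshold of Theorem \ref{first-main-result}.

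The only real content lies in Theorem \ref{Geom-convex-Hardy}, which we assume. That theorem is itself obtained from Theorem \ref{Geom-Hardy} by two observations. First, for convex $\Omega$ we have $m_{2s}\le\delta$, so the Hardy term with $m_{2s}$ dominates the one with $\delta$. Second, $\Omega_x=\Omega$ for every $x\in\Omega$. Consequently there is no substantive obstacle in this step, and the proof is a one-line consequence of Theorem \ref{Geom-convex-Hardy}.
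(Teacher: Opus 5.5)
Your proposal is correct and is exactly the paper's argument: the paper proves Theorem~\ref{second-main-result} in one line, as a consequence of Theorem~\ref{Geom-convex-Hardy} combined with the variational characterization \eqref{a9}. That is precisely your step of dividing the improved Hardy inequality by $\int_\Omega u^2\,dx$ and taking the infimum; your extra bookkeeping (the convention for $|\Omega|=\infty$ and the identification of \eqref{a9} with the threshold of Theorem~\ref{first-main-result} for $C^{1,1}$ sets) is consistent with this and harmless.
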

We stress that \eqref{second-main-result-convex} is not true in general for non-convex open sets. However, for a bounded open set of class $C^{1,1}$, using that $\mathfrak{h}_{N,s}=0$\footnote{Notice that for $s\in\left(0,\frac{1}{2}\right]$ the space $H^s_0(\Omega)$ contains the characteristic function $1_\Omega$, see \cite{fall2023existence}, and then we also have $\mu_{N,s}(\Omega)=0$.} for $s=\frac{1}{2}$, we were able to prove that $\lambda^*(s,\Omega)\geq 0$ when ``$s$ is close to $\frac{1}{2}$". More precisely, we have the following asymptotic result.
\begin{thm}\label{thm_lmada_asym}
Let $\Omega$ be a bounded domain of class $C^{1,1}$ in $\R^N$, $N\geq2$. Then there exists $s_*=s_*(\Omega)>\frac12$ such that
\begin{equation*}
\lambda^*(s,\Omega)\geq 0 \quad\text{ for all  } s\in\left(\frac 12,\, s_*(\Omega)\right).
\end{equation*}
Moreover,
\begin{align*}
\lim_{s\to \frac{1}{2}^+}\lambda^*(s,\Omega)=0.
\end{align*}
\end{thm}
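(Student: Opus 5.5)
The plan is to prove two one-sided statements: an upper bound $\limsup_{s\to\frac12^+}\lambda^*(s,\Omega)\le 0$, obtained from explicit test functions, and the lower bound $\lambda^*(s,\Omega)\ge0$ for $s$ close to $\frac12$, obtained by contradiction and compactness. Together they give $\lim_{s\to\frac12^+}\lambda^*(s,\Omega)=0$.

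\textbf{Upper bound.} By \eqref{a9} and $\mathfrak{h}_{N,s}\ge0$, for every $\varphi\in C^\infty_c(\Omega)$ we have
\[
\lambda^*(s,\Omega)\le \frac{[\varphi]_s}{\int_\Omega\varphi^2\,dx}.
\]
Since $1_\Omega\in H^{1/2}_0(\Omega)$ and the regional seminorm of $1_\Omega$ vanishes (as recalled in the footnote, see \cite{fall2023existence}), for every $\epsilon>0$ there is $\varphi_\epsilon\in C^\infty_c(\Omega)$ with $[\varphi_\epsilon]_{1/2}<\epsilon$ and $\int_\Omega\varphi_\epsilon^2\ge|\Omega|/2$. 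For this fixed $\varphi_\epsilon$, the map $s\mapsto [\varphi_\epsilon]_s$ is continuous at $s=\frac12$. Indeed, $c_{N,s}$ is continuous there, and dominated convergence applies: the integrand is bounded by $\|\nabla\varphi_\epsilon\|_\infty^2|x-y|^{2-N-2s}$ near the diagonal and by $4\|\varphi_\epsilon\|_\infty^2|x-y|^{-N-2s}$ off it, both uniformly for $s\in[\frac12,\frac34]$. Hence $\limsup_{s\to\frac12^+}\lambda^*(s,\Omega)\le 2\epsilon/|\Omega|$, and letting $\epsilon\to0$ gives the upper bound.

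\textbf{Lower bound.} Observe that $\lambda^*(s,\Omega)\ge0$ is equivalent to $\mathfrak J_{0,s}(\Omega)=\mu_{N,s}(\Omega)=\mathfrak h_{N,s}$. Indeed, by Theorem \ref{first-main-result}, $\mathfrak J_{0,s}=\mathfrak h_{N,s}$ if and only if $0\le\lambda^*$. So we argue by contradiction. Suppose $s_k\to\frac12^+$ and $\lambda^*(s_k,\Omega)<0$. By Corollary \ref{first-main-result-corl}, $\mu_k:=\mu_{N,s_k}(\Omega)<\mathfrak h_{N,s_k}$ is attained by a positive solution $u_k$ of
\[
(-\Delta)^{s_k}_\Omega u_k=\mu_k\,\frac{u_k}{\delta^{2s_k}},
\]
which we normalize so that $\int_\Omega u_k^2\,dx=1$. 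A simple lower bound on $\mu_k$ in terms of $\lambda_{1,s_k}$ is not available here, since $\lambda_{1,s_k}\to0$ by the upper-bound argument. Instead we use the gap: since $\mu_k<\mathfrak h_{N,s_k}\to0$ (the explicit formula gives $\kappa_{N,1}=0$), we have $[u_k]_{s_k}=\mu_k\int u_k^2\delta^{-2s_k}$.

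The aim is to show that a ground state with sub-half-space Hardy quotient must have mass that stays away from the boundary, and then to derive a contradiction in the limit $s\to\frac12$. The tool is the localization behind Step 2. Take $\chi_1^2+\chi_2^2=1$ with $\chi_1$ supported in $\Omega_{2\beta}$ and $\chi_1=1$ on $\Omega_\beta$, with $\beta$ chosen uniformly for $s\in(\frac12,\frac34)$. The IMS formula gives
\[
[u]_s\ \ge\ [\chi_1u]_s+[\chi_2u]_s-c_{N,s}\int_\Omega L_s(x)\,u^2\,dx,
\qquad L_s(x)\le C\beta^{-2s}.
\]
Applying the tubular inequality of Step 2 to $\chi_1u$ yields
\[
(\mathfrak h_{N,s_k}-\mu_k)\int_{\Omega_{\beta}}\frac{u_k^2}{\delta^{2s_k}}\;\le\;\mu_k\beta^{-2s_k}+C\beta^{-2s_k},
\]
which bounds the boundary mass. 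Next, pass to the limit: along a subsequence $u_k\to u$ in $L^2$ with $[u]_{1/2}\le\liminf[u_k]_{s_k}=0$, using a uniform compactness of $H^{s}_0$ for $s$ near $\frac12$, which must be checked. Hence $u$ is constant, $u=|\Omega|^{-1/2}$ on each component. I would then test the equation against $\delta^{\,s_k-1/2}$-type profiles, or apply Theorem \ref{non-existence-theorem} with $\eta=\mathfrak h_{N,s_k}-\mu_k$ after showing that this difference is small relative to $(\delta^{1-s}\log\delta)^{-2}$. The goal is to contradict $\mu_k<\mathfrak h_{N,s_k}$ for a limit profile that is constant up to the boundary.

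\textbf{Main obstacle.} The hardest step is making the lower bound rigorous, that is, obtaining \emph{uniform in $s$} versions of the Step 2 tubular inequality (a uniform $\beta$) and of the compactness. Both $\mathfrak h_{N,s}$ and the relevant gaps degenerate as $s\to\frac12$. If the contradiction route stalls, the fallback is to compare the two rates directly: show $\mathfrak h_{N,s}\sim c(2s-1)$ from the formula for $\kappa_{N,2s}$, and try to prove a lower bound $[u]_s-\mathfrak h_{N,s}\int u^2\delta^{-2s}\ge -o(2s-1)\int u^2$ with an explicit remainder coming from the curvature of $\partial\Omega$, which is bounded because $\Omega$ is $C^{1,1}$.
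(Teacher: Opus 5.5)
\textbf{Upper bound.} This part is correct and is essentially the paper's argument: near-optimizers approximating $1_\Omega$ in $H^{1/2}_0(\Omega)$, plus continuity of $s\mapsto[\varphi]_s$ (cf.\ Lemma~\ref{lambda_conti}). Discarding the Hardy term via $\mathfrak{h}_{N,s}\ge0$ is cleaner than the paper's detour through the classical Hardy inequality.

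\textbf{The gap: the lower bound.} You need it both for the first assertion and for $\liminf_{s\to\frac12^+}\lambda^*(s,\Omega)\ge0$. You correctly reduce it to $\mu_{N,s}(\Omega)=\mathfrak{h}_{N,s}$ for $s$ near $\frac12$. That is exactly Theorem~\ref{thm_hardy_asym}, and the paper's proof of this step is just a citation of it. Your sketch does not prove it, for three reasons.

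(i) With $\int_\Omega u_k^2=1$ you only know $[u_k]_{s_k}=\mu_k\int_\Omega u_k^2\delta^{-2s_k}$. Your boundary estimate only gives $\int_{\Omega_\beta}u_k^2\delta^{-2s_k}\lesssim\beta^{-2s_k}/(\mathfrak{h}_{N,s_k}-\mu_k)$. So even bounding $[u_k]_{s_k}$, let alone showing $[u_k]_{s_k}\to0$, needs control of $\mu_k/(\mathfrak{h}_{N,s_k}-\mu_k)$, and nothing provides it. If you normalize instead by $\int_\Omega u_k^2\delta^{-2s_k}=1$, you get $[u_k]_{s_k}=\mu_k\to0$, but then the constant limit may be $0$.

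(ii) A constant limit is not contradictory anyway. Constants have zero $H^{1/2}$-energy and are exactly the near-optimizers in your own upper bound. The information $\mu_k<\mathfrak{h}_{N,s_k}$ lives in a shrinking boundary layer that $L^2$-convergence discards.

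(iii) Feeding $\eta\equiv\mathfrak{h}_{N,s_k}-\mu_k$ into Theorem~\ref{non-existence-theorem} proves too much. Any positive constant satisfies its hypothesis automatically, since $\delta^{2-2s}\log^2\delta\to0$. Read literally, the theorem would then exclude every minimizer of $\mu_{N,s}(\Omega)$ for all $s\in(\frac12,1)$ and all $C^{1,1}$ domains. This is incompatible with $s\to1$ on smooth domains with $\mu(\Omega)<\frac14$: testing a fixed $\varphi\in C^\infty_c(\Omega)$ gives $\mu_{N,s}(\Omega)<\mathfrak{h}_{N,s}$ for $s$ near $1$, hence a minimizer by Corollary~\ref{first-main-result-corl}.

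Your fallback, even if carried out, would only give $\lambda^*(s,\Omega)\ge-o(2s-1)$. That yields the limit, but not the sign.

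\textbf{Citing Theorem~\ref{thm_hardy_asym} does not close the gap either, as the paper stands.} Its proof needs $\int_\Omega\delta^{(2s_n-1-\tau)q}\,dx=\infty$, i.e.\ $\tau-\frac1q\ge2s_n-1$. But the constraints $2<q<\frac{2N}{N-2(s_n-\tau)}$ imposed for the embedding give $\tau-\frac Nq<s_n-\frac N2$. Since $q>2$, this yields $\tau-\frac1q<s_n-\frac N2+\frac{N-1}{2}=s_n-\frac12<2s_n-1$, so the required divergence never occurs.

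This failure is structural, not a matter of tuning parameters. $\delta^{2s-1}$ itself lies in $H^s_0(\Omega)$ for every $s>\frac12$, because $2s-1>s-\frac12$. So a pointwise bound $u\ge c\,\delta^{2s-1}$ can never contradict membership in $H^s_0(\Omega)$ or in any space into which it embeds.

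Similarly, any fixed-$\beta$ localization, as in your IMS step, costs $O(c_{N,s}\beta^{-2s})\int_\Omega u^2$. That cost does not vanish as $s\to\frac12$, while $\lambda^*(s,\Omega)\le\lambda_{1,s}\to0$.

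Proving the exact sign $\lambda^*(s,\Omega)\ge0$ therefore needs a sharper boundary analysis, uniform in $s$, at order $2s-1$ and including curvature terms. Neither your proposal nor the paper's written argument supplies it.
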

We prove Theorem \ref{thm_lmada_asym} by using a continuity argument and by exploiting Theorem \ref{thm_hardy_asym} below.

Before stating our final result, we make some comments and recall related results available in the literature. It is well known that, in the classical Hardy inequality with singular weight concentrated on the boundary, the optimal Hardy constant $\mu_{N,1}(\Omega)$ associated with an open set $\Omega$ satisfies 
$$
\mu_{N,1}(\Omega)\leq \mathfrak{h}_{N,1}=\frac14.\footnote{Here $\mathfrak{h}_{N,1}:=\mu_{N,1}(\R^N_+)$ denotes the Hardy constant of the half-space.}
$$
Moreover, the topology and geometry of $\Omega$ play a crucial role in determining whether the equality
$$
\mu_{N,1}(\Omega)=\frac14,
$$
holds. For instance, this is the case for convex domains, see \cite{Dav94,marcus1998best, mat}, and, more generally, for weakly mean convex domains; see \cite{yanli}. The same conclusion also holds for annular domains and for a large class of simply connected domains; see \cite{avkh,bali,LaSo}. On the other hand, the strict inequality also holds for a large family of nonconvex domains, see \cite{LaSo,marcus1998best, Tidblom}.\\

In the fractional setting, the only result currently available in this direction, see \cite{bogdan2011best,loss2010hardy}, states that for convex open sets,
$$
\mu_{N,s}(\Omega)=\mathfrak{h}_{N,s},\quad \text{ for all } s\in\left(\frac{1}{2},\,1\right).
$$
\par
Our next result shows, rather surprisingly, that for $s$ close to $\frac12$, neither the topology nor the geometry of the open set $\Omega$ affects the value of the optimal constant. More precisely, the fractional Hardy constant becomes independent of $\Omega$, exhibiting a behavior markedly different from the local case. To the best of our knowledge, such a phenomenon has not been observed before in the nonlocal setting. Our result reads as follows 
\begin{thm}\label{thm_hardy_asym}
Let $\Omega$ be a bounded domain of class $C^{1,1}$ in $\R^N$, $N\geq2$. Then there exists $s_*=s_*(\Omega)>\frac12$ such that
\begin{align*}
\mu_{N,s}(\Omega)=\mathfrak{h}_{N,s},\quad \text{for all } \frac12 < s \leq s_*.
\end{align*}
In particular, $\mu_{N,s}(\Omega)$ is never achieved for $s$ sufficiently close to $\frac12$.
\end{thm}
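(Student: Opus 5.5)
By Theorem \ref{first-main-result} (Step 1) we already know $\mu_{N,s}(\Omega)=\mathfrak{J}_{0,s}(\Omega)\le\mathfrak{h}_{N,s}$ for every $s\in(\frac12,1)$. By \eqref{a6}, the reverse inequality $\mu_{N,s}(\Omega)\ge\mathfrak{h}_{N,s}$ holds exactly when $0\le\lambda^*(s,\Omega)$. So we must show $\lambda^*(s,\Omega)\ge0$ for $s$ close to $\frac12$, which is the first half of Theorem \ref{thm_lmada_asym}. Since that theorem is itself proved via the present one, we argue directly. Once equality is known, non-attainment follows from Corollary \ref{first-main-result-corl}: the infimum is achieved only if $\mu_{N,s}(\Omega)<\mathfrak{h}_{N,s}$.

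\textbf{Plan: split into a boundary layer and an interior.} Fix $\beta>0$ small, depending only on $\Omega$, so that the inner tubular neighbourhood $\Omega_\beta$ is well defined. The Step 2 inequality
\[
[u]_{s,\Omega_\beta}\ge\mathfrak{h}_{N,s}\int_{\Omega_\beta}\frac{u^2}{\delta^{2s}}\,dx
\]
is the basic tool near the boundary. I would check that the admissible $\beta$ can be chosen uniformly for $s\in(\frac12,\frac34]$; the $C^{1,1}$ geometry enters only through curvature bounds, which do not depend on $s$. On the interior region $\{\delta\ge\beta/2\}$, the Hardy term is at most $(\beta/2)^{-2s}\int u^2$. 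Using a partition of unity $\chi_1^2+\chi_2^2=1$, with $\chi_1$ supported in $\Omega_\beta$, an IMS-type localization for the Gagliardo form gives
\[
[u]_s\ge[\chi_1u]_s+[\chi_2u]_s-C_\beta\int_\Omega u^2,
\]
with $C_\beta$ bounded uniformly in $s$ near $\frac12$. Combining these,
\[
[u]_s-\mathfrak{h}_{N,s}\int_\Omega\frac{u^2}{\delta^{2s}}\,dx\ge[\chi_2u]_s-\bigl(C_\beta+\mathfrak{h}_{N,s}(\beta/2)^{-2s}\bigr)\int u^2 .
\]
This only yields $\lambda^*(s,\Omega)\ge -C$, which is not sharp enough.

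\textbf{Getting the sign right as $s\to\frac12^+$.} The key observation is that $\mathfrak{h}_{N,s}\to0$, since $\kappa_{N,2s}$ vanishes at $s=\frac12$ because $\Gamma(\frac12)\Gamma(1)/\sqrt\pi=1$. The $\mathfrak{h}$-dependent error terms therefore vanish in the limit, but the localization error $C_\beta\int u^2$ does not. To avoid it, I would argue by contradiction. Suppose $s_k\downarrow\frac12$ and $\lambda^*(s_k,\Omega)<0$. Then by Theorem \ref{first-main-result} the infimum $\mu_{N,s_k}(\Omega)<\mathfrak{h}_{N,s_k}$ is attained by some $u_k>0$ with $\int u_k^2/\delta^{2s_k}=1$ and
\[
[u_k]_{s_k}<\mathfrak{h}_{N,s_k}\to0 .
\]
By Theorem \ref{non-existence-theorem} and the Step 2 inequality, mass cannot escape to the boundary in a way compatible with the bound $<\mathfrak{h}$. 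More precisely, write $\varepsilon_k:=\mathfrak{h}_{N,s_k}-\mu_{N,s_k}>0$. Using the tubular-neighbourhood inequality, the interior portion satisfies
\[
[\chi_2u_k]_{s_k}\lesssim\mathfrak{h}_{N,s_k}\int_{\{\delta>\beta/2\}}\frac{u_k^2}{\delta^{2s_k}}\,dx\to0 .
\]
Uniform fractional Poincaré/compactness then forces $\chi_2u_k$ to converge to a constant. The Hardy weight at $s=\frac12$ together with $H^{1/2}_0$ compactness shows this constant must vanish. Consequently the normalized Hardy mass concentrates in $\Omega_\beta$, where the Step 2 inequality gives $[u_k]_{s_k}\ge\mathfrak{h}_{N,s_k}(1-o(1))$ up to a cross-term. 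The cross-term must then be shown to be $o(\varepsilon_k)$, which contradicts minimality.

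\textbf{Main obstacle.} The hardest step is making all constants uniform in $s\to\frac12^+$. This concerns the Step 2 inequality in $\Omega_\beta$ and the quantitative comparison between the localization error and the gap $\mathfrak{h}_{N,s}-\mu_{N,s}$, both of which degenerate at $s=\frac12$. As a fallback, I would exploit that $c_{N,s}$ stays bounded and the weight $\delta^{-2s}$ is barely nonintegrable near $s=\frac12$. Concretely, I would compare directly with the half-space ground state $x_N^{(2s-1)/2}$ transplanted into tubular coordinates, and show that its curvature correction is $O(\mathfrak{h}_{N,s})$ times an integrable weight, hence absorbable for $s$ near $\frac12$.
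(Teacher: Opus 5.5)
The contradiction setup (minimisers $u_k$ with $[u_k]_{s_k}<\mathfrak{h}_{N,s_k}\to0$) is the same as the paper's. The soft part of your argument can be made rigorous, but not as written. The IMS identity leaves an error $C_\beta\int_\Omega u_k^2\,dx$ that is only bounded, so it does not give $[\chi_2u_k]_{s_k}\lesssim\mathfrak{h}_{N,s_k}\int_{\{\delta>\beta/2\}}u_k^2\delta^{-2s_k}\,dx$. Argue instead as follows. Since $|x-y|^{2s_k-1}\le\diam(\Omega)^{2s_k-1}$, you get $[u_k]_{\frac12}\lesssim[u_k]_{s_k}\to0$. Compactness of $H^{\frac12}(\Omega)\hookrightarrow L^2(\Omega)$ then gives $u_k\to c$ in $L^2$ and a.e.\ along a subsequence. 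Fatou applied to $\int_\Omega u_k^2\delta^{-2s_k}\,dx=1$ yields $c^2\int_\Omega\delta^{-1}\,dx\le1$, so $c=0$.

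The genuine gap is the final step. Once $u_k\to0$ in $L^2$, localisation plus the tubular inequality only give
\[
\varepsilon_k=\mathfrak{h}_{N,s_k}-[u_k]_{s_k}\le\mathfrak{h}_{N,s_k}\int_\Omega\chi_2^2\frac{u_k^2}{\delta^{2s_k}}\,dx+C_\beta\int_\Omega u_k^2\,dx .
\]
This bounds a positive gap above by positive error terms, which is perfectly consistent. Nothing bounds $\varepsilon_k$ from below, so proving that the errors are $o(1)$, or even $o(\mathfrak{h}_{N,s_k})$, gives nothing. You need an inequality whose error has an exact sign. The compactness facts ($[u_k]_{s_k}\to0$, $u_k\to0$ in $L^2$, Hardy mass concentrating at $\partial\Omega$) cannot tell whether $\mu_{N,s_k}$ equals $\mathfrak{h}_{N,s_k}$ or lies slightly below it.

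You also cannot simply quote the tubular inequality with $\beta$ uniform in $s$. The paper's derivation uses $|\Psi(\sigma,t)-\Psi(\bar\sigma,r)|^{-N-2s}\ge C|t-r|^{-N-2s}$, which fails for $\sigma\neq\bar\sigma$, so you would have to prove that inequality yourself.

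Your fallback does not supply the missing exact bound either. A ground-state proof of the sharp constant needs $(-\Delta)^s_\Omega w\ge\mathfrak{h}_{N,s}\,w\,\delta^{-2s}$ with no negative remainder, so there is no spare term into which a curvature error could be ``absorbed''. Heuristically, the error is also not $O(\mathfrak{h}_{N,s})$:
\begin{itemize}
\item The bracket in $\kappa_{N,2s}$ vanishes to second order at $s=\frac12$, so $\mathfrak{h}_{N,s}\asymp(2s-1)^2$.
\item Since $(-\Delta)^s_\Omega$ kills constants, $(-\Delta)^s_\Omega\delta^{\frac{2s-1}{2}}\approx\frac{2s-1}{2}(-\Delta)^s_\Omega\log\delta$, so curvature generically enters at order $2s-1$.
\item Relative to the Hardy term the correction is of size (curvature)$\times\delta/(2s-1)$. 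It therefore dominates on $\{\delta\gtrsim 2s-1\}$, and there is no reason for it to have a favourable sign when $\Omega$ is not convex.
\end{itemize}

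The paper closes its contradiction differently, with a pointwise bound instead of an energy comparison. It takes $u_n\ge c\,\delta^{2s_n-1}$ from Green function estimates (Lemma \ref{w-lower esti}), the embedding of Theorem \ref{valdi}, and a $W^{\tau,q}$-Hardy inequality with $q\tau>1$. These are meant to make $\int_\Omega\delta^{(2s_n-1)q-q\tau}\,dx$ both finite and infinite.

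Do not try to borrow that step. For every $s\in(\frac12,1)$ we have $\delta^{2s-1}\in H^s_0(\Omega)$: its Gagliardo seminorm is finite since $2s-1>s-\frac12$, and it vanishes on $\partial\Omega$. Applying the same chain to $\delta^{2s_n-1}$ itself shows that chain cannot make the integral diverge. Concretely, $q\le\frac{2N}{N-2(s_n-\tau)}$ and $(2s_n-1-\tau)q<-1$ together force $\tau>\frac12+\frac{2N-1}{N-1}\bigl(s_n-\frac12\bigr)>s_n$, so the required $(q,\tau)$ do not exist.

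What is missing, in your sketch and in the paper alike, is an exact lower bound that sees the geometry of $\partial\Omega$. One example would be a positive supersolution of $(-\Delta)^s_\Omega w\ge\mathfrak{h}_{N,s}\,w\,\delta^{-2s}$ on all of $\Omega$ for $s$ close to $\frac12$.
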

The proof relies on a contradiction argument where we have used a general fractional Hardy inequality, Green function estimates for the regional fractional Laplacian\footnote{ This is the reason why the assumption $\Omega \in C^{1,1}$ is required.} and an embedding result for fractional Sobolev spaces.\\

The paper is organized as follows. In Section \ref{section:proof of first main result}, we start by establishing the sharp bound $\mathfrak{J}_{\lambda,s}(\Omega)\leq\mathfrak{h}_{N,s}$ for all $\lambda$. This is contained in Lemma \ref{lema1}. In Lemma \ref{J achieved}, we prove $\mathfrak{J}_{\lambda,s}(\Omega)=\mathfrak{h}_{N,s}$ for some $\lambda$. Next, in Lemma \ref{lema2}, we prove that the infimum in \eqref{a1} is achieved for any $\lambda>\lambda^*(s,\Omega)$, and in Lemma \ref{lema3} we prove the non-attainment of \eqref{a1} for $\lambda\leq\lambda^*(s,\Omega)$. Theorem \ref{first-main-result} is a consequence of these results. This section also contains the proof of Theorem \ref{non-existence-theorem} and Corollary \ref{first-main-result-corl}. In Section \ref{section:proof of second main result}, we prove Theorems \ref{Geom-convex-Hardy}~\&~\ref{second-main-result} using a new geometric improvement of the fractional Hardy inequality, Theorem \ref{Geom-Hardy}. This section also contains the proof of Theorems \ref{Geom-convex-Hardy-bndd}~\&~\ref{Geom-convex-Hardy-bndd+}. Section \ref{main-asym}, contains the proof of Theorems~\ref{thm_lmada_asym}~\&~\ref{thm_hardy_asym}. We conclude the paper in the Appendix, where we prove an asymptotic estimate, namely \eqref{asymptotic-k} and an embedding Theorem.

\section{Proof of Theorem \ref{first-main-result}}\label{section:proof of first main result}
The goal of this section is to prove Theorem \ref{first-main-result}. For this, we first collect some useful results. We start with the following sharp inequality.
\begin{lemma} \label{lema1}
Let $\Omega$ be an open set in $\R^N$ of class $C^{1,1}$. Then
\begin{equation}\label{upper-bound}
\mathfrak{J}_{\lambda,s}(\Omega)\leq\mathfrak{h}_{N,s}.
\end{equation}
for all $\lambda\in \R$.
\end{lemma}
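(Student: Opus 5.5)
We prove the bound with a blow-up argument at a boundary point: test functions concentrating there see only a half-space, so they reproduce $\mathfrak{h}_{N,s}$, while the $\lambda$-term becomes negligible. Fix $\lambda\in\R$ and $\varepsilon>0$. By the definition of $\mathfrak{h}_{N,s}=\mu_{N,s}(\R^N_+)$ and density, there is $\phi\in C^\infty_c(\R^N_+)$ with
\[
\frac{c_{N,s}}{2}\iint_{\R^N_+\times\R^N_+}\frac{(\phi(x)-\phi(y))^2}{|x-y|^{N+2s}}\,dxdy\le(\mathfrak{h}_{N,s}+\varepsilon)\int_{\R^N_+}\frac{\phi^2}{x_N^{2s}}\,dx .
\]
Pick $x_0\in\partial\Omega$. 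After a rigid motion, assume $x_0=0$ and that the inner normal there is $e_N$. Set $\phi_r(x)=\phi(x/r)$. Since $\supp\phi$ is a compact subset of $\R^N_+$, it lies in a cone $\{x_N>\theta|x|\}$ and is bounded away from $0$.

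\textbf{Step 1 (support).} By the $C^{1,1}$ regularity, $\Omega$ contains the truncated cone $\{x_N>\theta|x|\}\cap B_\rho$ for some $\rho>0$, using the interior ball condition. Hence $\phi_r\in C^\infty_c(\Omega)$ for $r$ small.

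\textbf{Step 2 (Hardy term).} For $x\in\supp\phi_r$ we have $|x|\le Cr$. The $C^{1,1}$ bound on the boundary then gives $|\delta(x)-x_N|\le C|x|^2\le C' r\,x_N$, because $x_N\ge\theta|x|$ on the cone. After scaling,
\[
\int_\Omega\frac{\phi_r^2}{\delta^{2s}}\,dx=r^{N-2s}\bigl(1+O(r)\bigr)\int_{\R^N_+}\frac{\phi^2}{x_N^{2s}}\,dx .
\]

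\textbf{Step 3 (Gagliardo term).} Since $\Omega\times\Omega$ differs from $\R^N_+\times\R^N_+$, we use $\phi_r\ge0$ pointwise on both sides and bound the numerator through the decomposition $\iint_{\Omega\times\Omega}=\iint_{\supp\times\supp}+2\iint_{\supp\phi_r\times(\Omega\setminus\supp\phi_r)}$. The interaction term equals $2\int\phi_r(x)^2\int_{\Omega\setminus\supp}|x-y|^{-N-2s}\,dy\,dx$. Comparing $\Omega$ with $\R^N_+$ near $0$, the symmetric difference $(\Omega\triangle\R^N_+)\cap B_R$ lies in the region $|y_N|\le C|y'|^2$. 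For $x$ in the cone at scale $r$, this region contributes $\int_{\Omega\triangle\R^N_+}|x-y|^{-N-2s}\,dy\le C r^{1-2s}$, which is smaller by a factor $r$ than the main term of order $r^{-2s}$. Points of $\Omega$ far away contribute $O(1)$. After rescaling, the full Gagliardo term is therefore $r^{N-2s}$ times the half-space quantity plus $O(r^{N-2s+1})$. Meanwhile $\lambda\int\phi_r^2=O(r^N)$.

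\textbf{Conclusion.} Dividing the two estimates gives $\mathfrak{J}_{\lambda,s}(\Omega)\le\mathfrak{h}_{N,s}+\varepsilon+O(r)+O(r^{2s})$. Letting $r\to0$ and then $\varepsilon\to0$ proves the lemma.

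The main obstacle is Step 3, namely controlling the nonlocal interaction between $\supp\phi_r$ and the region where $\Omega$ and $\R^N_+$ differ. Because $\supp\phi$ is a compact subset of the open half-space, every $x\in\supp\phi_r$ has $\dist(x,\partial\R^N_+)\gtrsim r$. The kernel is therefore integrable over the thin region $\{|y_N|\le C|y'|^2\}$, and a direct computation in $y'$ gives the $O(r^{1-2s})$ bound above.
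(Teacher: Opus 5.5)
Your proposal is correct and follows the same blow-up argument as the paper: a rescaled near-optimal half-space test function supported in a cone at a boundary point, $\delta$ compared with $x_N$ via boundary regularity, and a $\lambda$-term of relative size $O(r^{2s})$. Your Step 3 fills a gap the paper passes over, since its proof replaces $\iint_{\Omega\times\Omega}$ from \eqref{b0} by $\iint_{\R^N_+\times\R^N_+}$ without comment; this is not automatic for the regional seminorm when $\Omega\not\subset\R^N_+$. Your bound $\int_{\Omega\setminus\R^N_+}|x-y|^{-N-2s}\,dy\le Cr^{1-2s}+O(1)$ for $x\in\supp\phi_r$ (from the thin set $\{|y_N|\le C|y'|^2\}$ and $2s>1$) is exactly what justifies that replacement, up to a relative error $O(r)+O(r^{2s})$.
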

\begin{proof}
To prove \eqref{upper-bound}, we adapt the proof of Theorem 5 in Marcus-Mizel-Pinchover \cite{marcus1998best} for the local case. We first note that 
\begin{equation}\label{b0}
\mathfrak{J}_{\lambda,s}(\Omega)\leq\frac{\frac{c_{N,s}}{2}\displaystyle\iint_{\Omega\times\Omega}\frac{(\phi(x)-\phi(y))^2}{|x-y|^{N+2s}}\ dxdy+|\lambda|\displaystyle\int_{\Omega}\phi(x)^2\, dx}{\displaystyle\int_{\Omega}\frac{\phi(x)^2}{\delta(x)^{2s}}\ dx},~~\forall\lambda\in\R.
\end{equation}
Since $\Omega$ is of class $C^1$, then any point at the boundary has a tangent hyperplane.  Fix $P\in\partial\Omega$ and let $\Pi$ be the tangent hyperplane at $\partial\Omega$ in $P$. Then arguing as in the proof of \cite[Theorem 4.1]{lamberti2019lp}, we get that condition $(2.2)$ in \cite[Theorem 5]{marcus1998best} is satisfied, that is, there exists a neighborhood $U$ of $P$ such that
 \begin{equation}\label{condition}
 |\dist(x,\Pi)-\delta(x)|\leq o(1)\dist(x,P),~~\forall x\in U\cap\Omega
 \end{equation}
where $o(1)\to0$ as $x\to P$, and $U\cap\Omega$ contains segment $\overline{PQ}$ perpendicular to $\Pi$.

Without loss of generality, we can assume that $P=O$, $\Pi=\{x\in\R^N: x_N=0\}$ and that $\Omega$ contains a segment $\{(0,x_N): 0<x_N<b\}$. \\
Let $\varepsilon>0$, since $\mu_{N,s}(\R^N_+)=\mathfrak{h}_{N,s}$, there exists $\phi\in C^{\infty}_c(\R^N_+)$ such that
\begin{equation}\label{b1}
\frac{\frac{c_{N,s}}{2}\displaystyle\iint_{\R^N_+\times\R^N_+}\frac{(\phi(x)-\phi(y))^2}{|x-y|^{N+2s}}\ dxdy}{\int_{\R^N_+}\frac{\phi^2}{x_N^{2s}}\ dx}<\mathfrak{h}_{N,s}+\varepsilon.
\end{equation}
Moreover, there exists $R>0$ such that 
\begin{equation*}
\supp~\phi\subset \mathcal{C}_R=\{x=(\bar x,\,x_N):\; x_N>0, |\bar x|<R\,x_N\}. 
\end{equation*}
In view of \eqref{condition} and by the scale invariance of the left-hand side of \eqref{b1} and $\mathcal{C}_R$ we may assume that, for $\varepsilon>0$ sufficiently small,
\begin{align}\label{cond+}
\supp~\phi\subset U\cap\Omega\,~~~ \text{ and }~~~ \delta(\varepsilon x)\leq (1+o(1))~\varepsilon x_N,\quad \text{for all } x\in\supp~\phi.
 \end{align}
 Here, $o(1)$ is such that $o(1)\to0$ as $x\to O$. We now define the rescaled function
\begin{equation*}
\phi_{\varepsilon}(x)=\phi\left(\frac{x}{\varepsilon}\right).
\end{equation*}
Then, it holds that
\begin{align*}
\int_{\Omega}\displaystyle\frac{\phi_{\varepsilon}(x)^2}{\delta(x)^{2s}}\,dx=
\varepsilon^{N}\int_{U\cap\Omega}\frac{\phi(x)^2}{\delta(\varepsilon x)^{2s}}\ dx.
\end{align*}
Thus, by using \eqref{cond+}, it follows that
\begin{equation}\label{b2}
\int_{\Omega}\frac{\phi_{\varepsilon}(x)^2}{\delta(x)^{2s}}\ dx\geq (1+o(1))\varepsilon^{N-2s}\int_{\R^N_+}\frac{\phi(x)^2}{x_N^{2s}}\ dx.
\end{equation}
On the other hand,
\begin{align}\label{b30}
\int_{\Omega}\phi_{\varepsilon}(x)^2\ dx=\int_{\R^N_+}\phi_{\varepsilon}(x)^2\ dx=\varepsilon^{N}\int_{\R^N_+}\phi(x)^2\, dx
\end{align}
and
\begin{align}\label{b3}
\iint_{\R^N_+\times\R^N_+}\frac{(\phi_{\varepsilon}(x)-\phi_{\varepsilon}(y))^2}{|x-y|^{N+2s}}\ dxdy=\varepsilon^{N-2s}\iint_{\R^N_+\times\R^N_+}\frac{(\phi(x)-\phi(y))^2}{|x-y|^{N+2s}}\ dxdy.
\end{align}
So, from \eqref{b2}, \eqref{b30}, and \eqref{b3}, and recalling \eqref{b0}, we get
\begin{align*}
\mathfrak{J}_{\lambda,s}(\Omega)&\leq\frac{\frac{c_{N,s}}{2}\displaystyle\iint_{\R^N_+\times\R^N_+}\frac{(\phi_{\varepsilon}(x)-\phi_{\varepsilon}(y))^2}{|x-y|^{N+2s}}\ dxdy+|\lambda|\int_{\Omega}\phi_{\varepsilon}(x)^2\ dx}{\displaystyle\int_{\Omega}\frac{\phi_{\varepsilon}(x)^2}{\delta(x)^{2s}}\ dx}\\
&\leq(1+o(1))\frac{\frac{c_{N,s}}{2}\displaystyle\iint_{\R^N_+\times\R^N_+}\frac{(\phi(x)-\phi(y))^2}{|x-y|^{N+2s}}\ dxdy+|\lambda|\varepsilon^{2s}\int_{\R_+^N}\phi(x)^2\ dx}{\displaystyle\int_{\R^N_+}\frac{\phi(x)^2}{x_N^{2s}}\ dx}\\
&\leq(1+o(1))~(\mathfrak{h}_{N,s}+\varepsilon)+o(\varepsilon^{2s}),
\end{align*}
where, in the latter, we used \eqref{b1}. Since $\varepsilon$ can be chosen arbitrarily small, \eqref{upper-bound} follows. This concludes the proof of the Lemma.
\end{proof}

\begin{lemma}\label{J achieved}
Let $\Omega$ be an open bounded set in $\R^N$ of class $C^{1,1}$. Then there exists $\lambda\in\R$ such that
\begin{equation}\label{lower-bound-j}
\mathfrak{J}_{\lambda,s}(\Omega)= \mathfrak{h}_{N,s}.
\end{equation}
\end{lemma}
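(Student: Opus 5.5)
By Lemma \ref{lema1} we already have $\mathfrak{J}_{\lambda,s}(\Omega)\leq\mathfrak{h}_{N,s}$ for every $\lambda$. It therefore suffices to find some $\lambda\in\R$, necessarily very negative, with
\begin{equation*}
\frac{c_{N,s}}{2}\iint_{\Omega\times\Omega}\frac{(u(x)-u(y))^2}{|x-y|^{N+2s}}\,dxdy-\lambda\int_\Omega u^2\,dx\geq \mathfrak{h}_{N,s}\int_\Omega\frac{u^2}{\delta^{2s}}\,dx\qquad\forall u\in H^s_0(\Omega).
\end{equation*}
We follow the strategy announced in Step 2. The key ingredient is a localized Hardy inequality in the inner tubular neighbourhood:
\begin{equation*}
\frac{c_{N,s}}{2}\iint_{\Omega_{\beta}\times\Omega_{\beta}}\frac{(u(x)-u(y))^2}{|x-y|^{N+2s}}\,dxdy\geq\mathfrak{h}_{N,s}\int_{\Omega_{\beta}}\frac{u^2}{\delta^{2s}}\,dx,
\end{equation*}
valid for all $\beta$ small and all $u\in H^s_0(\Omega)$.

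Granting this inequality, the lemma follows quickly. Since the double integral over $\Omega\times\Omega$ dominates the one over $\Omega_\beta\times\Omega_\beta$, we obtain
\begin{equation*}
[u]_s\geq \mathfrak{h}_{N,s}\int_{\Omega}\frac{u^2}{\delta^{2s}}\,dx-\mathfrak{h}_{N,s}\int_{\Omega\setminus\Omega_\beta}\frac{u^2}{\delta^{2s}}\,dx\geq\mathfrak{h}_{N,s}\int_{\Omega}\frac{u^2}{\delta^{2s}}\,dx-\mathfrak{h}_{N,s}\beta^{-2s}\int_\Omega u^2\,dx,
\end{equation*}
because $\delta\geq\beta$ on $\Omega\setminus\Omega_\beta$. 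Hence the displayed inequality holds with $\lambda=-\mathfrak{h}_{N,s}\beta^{-2s}$, which gives $\mathfrak{J}_{\lambda,s}(\Omega)\geq\mathfrak{h}_{N,s}$. Combined with Lemma \ref{lema1}, this yields equality.

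The main obstacle is the tubular inequality itself. I would prove it through a ground-state representation, building a positive weight $w$ on $\Omega_\beta$ that is almost a supersolution. Take $w=\delta^{(2s-1)/2}$, the half-space ground state $x_N^{(2s-1)/2}$ transplanted via the distance function, which is $C^{1,1}$ near $\partial\Omega$ because $\Omega$ is $C^{1,1}$. Write $u=wv$. Then
\begin{equation*}
[u]_{s,\Omega_\beta}=\frac{c_{N,s}}{2}\iint_{\Omega_\beta\times\Omega_\beta} w(x)w(y)\frac{(v(x)-v(y))^2}{|x-y|^{N+2s}}\,dxdy+\int_{\Omega_\beta}\frac{(-\Delta)^s_{\Omega_\beta}w}{w}\,u^2\,dx,
\end{equation*}
so it suffices to show $(-\Delta)^s_{\Omega_\beta}w\geq\mathfrak{h}_{N,s}\,w\,\delta^{-2s}$ in $\Omega_\beta$.

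This pointwise estimate is delicate. Near $\partial\Omega$, flattening the boundary yields $(-\Delta)^s_{\Omega_\beta} w=\mathfrak{h}_{N,s}w\delta^{-2s}+O(\delta^{(2s-1)/2-2s+1})$: the curvature error is one order better, of size $\delta^{-2s}\cdot\delta$. Near the inner boundary $\{\delta=\beta\}$ the censoring removes mass, which only helps, since the removed terms have $w(x)-w(y)$ of either sign and must be controlled. The error term has an indefinite sign, and I expect it to be unmanageable in exactly this form.

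To absorb the error, I would modify the weight to $w=\delta^{(2s-1)/2}(1+A\delta^{\theta})$, or use $\delta^{(2s-1)/2}$ with the Hardy constant slightly perturbed, choosing $A$ and $\theta\in(0,1)$ so that the leading correction is positive and of order $\delta^{\theta-2s}w$, which dominates the $O(\delta^{1-2s})w$ error for $\beta$ small. If this sign bookkeeping fails, the fallback is to prove the tubular inequality with a remainder $-C\int_{\Omega_\beta}u^2\delta^{1-2s}\,dx$ instead. That remainder is still bounded by $C\beta\int_{\Omega_\beta} u^2/\delta^{2s}\,dx$, which cannot be absorbed directly. A cleaner route is therefore a partition of unity along $\partial\Omega$, comparing each piece with the half-space Hardy inequality of \cite{bogdan2011best} and controlling the localization errors by $C\int u^2$. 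Those errors are exactly what is absorbed into $\lambda$, so this route still proves the lemma even without the sharp tubular inequality.
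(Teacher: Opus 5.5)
Your reduction from the tubular inequality to the lemma is correct, and it is shorter than the paper's. The paper applies \eqref{a100} only to $\varrho u$, with a cutoff $\varrho$ supported in $\Omega_{\beta/2}$, and then pays for $[\varrho u]_s-[u]_s$ with a $C\int_\Omega u^2$ term. You instead restrict the seminorm to $\Omega_\beta\times\Omega_\beta$ and get $\lambda=-\mathfrak{h}_{N,s}\beta^{-2s}$ at once; the price is that you need \eqref{a100} also for functions not vanishing on $\{\delta=\beta\}$.

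The genuine gap is that the tubular inequality with the \emph{exact} constant $\mathfrak{h}_{N,s}$, which is the whole content of the lemma, is never proved. Your ground-state plan stops where the work begins. To carry it out you would have to:
\begin{enumerate}
\item establish $(-\Delta)^s_{\Omega}\delta^{q}=C(q)\delta^{q-2s}+O(\delta^{q+1-2s})$ near a $C^{1,1}$ boundary, with $C(q)$ the half-space constant for $x_N^q$;
\item fix the sign of $A$. Since $q\mapsto C(q)$ is strictly concave and symmetric about $\frac{2s-1}{2}$, where it attains its maximum $\mathfrak{h}_{N,s}$, the weight $w=\delta^{(2s-1)/2}(1+A\delta^{\theta})$ produces the leading correction $A\big(C(\frac{2s-1}{2}+\theta)-\mathfrak{h}_{N,s}\big)\delta^{\theta-2s}w$, which is positive only if $A<0$;
\item control the far field and the inner boundary, and justify the ground-state identity for such $u$.
\end{enumerate}
On the inner boundary your remark is muddled. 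For $x\in\Omega_\beta$, $y\in\Omega\setminus\Omega_\beta$ and $w$ increasing in $\delta$, one has $w(x)-w(y)<0$, so the censored terms have a definite, favourable sign.

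The fallback does not close the gap. Flattening $\partial\Omega$ on a patch of size $r$ by a $C^{1,1}$ chart $\Phi$ distorts both the kernel and the weight multiplicatively: $|\Phi(x)-\Phi(y)|=(1+O(r))|x-y|$, and $\delta=(1+O(r))X_N$ in the flattened coordinates $X=\Phi(x)$. A partition of unity plus the half-space inequality of \cite{bogdan2011best} therefore gives only $[u]_s\ge(\mathfrak{h}_{N,s}-Cr)\int_\Omega u^2\delta^{-2s}dx-C_r\int_\Omega u^2dx$. Only the localization (IMS-type) commutator is of the form $C\int u^2$. This yields $\mathfrak{J}_{\lambda,s}(\Omega)\ge\mathfrak{h}_{N,s}-\varepsilon$ for $\lambda\le\lambda_\varepsilon$, never equality, and a loss in the constant cannot be traded for a more negative $\lambda$.

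Getting the sharp constant requires using that the curvature error is $O(\delta)$-relative, i.e.\ of size $\int u^2\delta^{1-2s}$, as you yourself observe. It must then be absorbed by a genuinely improved inequality on $\Omega_\beta$: the modified ground state with $A<0$, or a half-space Hardy inequality with a logarithmic remainder in the spirit of \cite{brezis1997hardy}.

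You also cannot borrow the paper's own derivation of \eqref{a100} to fill this, for three reasons:
\begin{enumerate}
\item it bounds $|\Psi(\sigma,t)-\Psi(\bar\sigma,r)|^{-N-2s}$ from below by $C|t-r|^{-N-2s}$, whereas the $1$-Lipschitz property of $\delta$ gives the reverse inequality;
\item it applies a one-dimensional Hardy inequality to differences taken at two boundary points $\sigma\neq\bar\sigma$;
\item it ends with a constant of order $\beta^{1-N}$, which is incompatible with the upper bound $\mathfrak{h}_{N,s}$ imposed on $\Omega_\beta$ by the concentrating test functions of Lemma \ref{lema1}.
\end{enumerate}
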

\begin{proof}
In view of Lemma \ref{lema1}, we only need to show the existence of $\lambda\in\R$ such that
\begin{equation}\label{lower-bound-jj}
\mathfrak{J}_{\lambda,s}(\Omega)\geq \mathfrak{h}_{N,s}.
\end{equation}
To this end, we define for $\beta>0$ sufficiently small (say $\beta<\bar\beta$), 
\begin{equation*}
\Omega_{\beta}:=\{x\in\Omega: \delta(x)<\beta\}.
\end{equation*}

Note that for every $x\in\Omega_{\beta}$, there exists a unique point $\sigma(x)\in\partial\Omega$ such that $\delta(x)=|x-\sigma(x)|$. As in \cite{djitte} we also define $\Psi: \partial \Omega \times (0, \bar\beta) \to \Omega_{\bar\beta}$ as the map 
\begin{equation}\label{def:localCoordBoundary}
\Psi(\sigma,r):= \sigma - r \nu(\sigma),
\end{equation}
where $\nu: \partial \Omega \to \R^N$ is the outward unit  normal vector field. Since  $\partial \Omega\in C^{1,1}$ by assumption, the map $\Psi$ is Lipschitz, moreover for  $\bar\beta>0$ sufficiently small it is  bi-Lipschitz.  In particular, $\Psi$ is a.e. differentiable, with 
\begin{align}\label{diffeo-dom}
\Psi(\partial \Omega \times (0, \bar\beta))= \Omega_{\bar\beta},\; \text{  and  }\;\delta(\Psi(\sigma,t)) = t,
\end{align}
 for $\sigma \in \partial \Omega, 0 \le t < \bar\beta$. Furthermore 
\begin{equation}\label{jacobian-bounds}
\|{\rm Jac}_{{\Psi}}\|_{L^\infty(\Omega_{\bar\beta})}< +\infty\quad \text{and}\quad |{\rm Jac}_{{\Psi}}(\sigma,t)-1|\leq c t \quad \text{for a.e. $\sigma \in \partial \Omega$},
\end{equation}
where $c$ is a constant depending only on $\partial\Omega$, $\bar\beta$, and the choice of local coordinates on $\partial\Omega$. For every $u\in L^1(\Omega_{\beta})$,
\begin{equation}\label{change-of-variable}
    \int_{\Omega_{\beta}}u(x)\ dx=\int_{0}^{\beta}\int_{\partial\Omega}u(\Psi(\sigma,t)){\rm Jac}_{{\Psi}}(\sigma,t)\ d\sigma dt,
\end{equation}
where $d\sigma$ denotes the surface element on $\partial\Omega$ (see e.g. \cite{brezis1997hardy}). In the same way as in \eqref{change-of-variable}, we also have
\begin{align*}\label{ar}
[u]_{s,\Omega_\beta}:=&\frac{c_{N,s}}{2}\iint_{\Omega_{\beta}\times\Omega_{\beta}}\frac{(u(x)-u(y))^2}{|x-y|^{N+2s}}\ dxdy\\
&=\frac{c_{N,s}}{2}\iint_{\partial\Omega\times\partial\Omega}\int_0^\beta\int_0^\beta\frac{(u(\Psi(\sigma,t))-u(\Psi(\bar\sigma,r)))^2}{|\Psi(\sigma,t)-\Psi(\bar\sigma,r)|^{N+2s}}\, {\rm Jac}_{{\Psi}}(\sigma,t){\rm Jac}_{{\Psi}}(\bar\sigma,r) dtdr\,d\sigma d\bar\sigma.
\end{align*}
Using that $\Psi$ is Lipschitz, and a scaling, we obtain
\begin{align*} 
&[u]_{s,\Omega_\beta}
\geq \frac{c_{N,s}}{2}\iint_{\partial\Omega\times\partial\Omega}\int_0^\beta\int_0^\beta\frac{(u(\Psi(\sigma,t))-u(\Psi(\bar\sigma,r)))^2}{|t-r|^{N+2s}}\, C(\bar\beta){\rm Jac}_{{\Psi}}(\sigma,t){\rm Jac}_{{\Psi}}(\bar\sigma,r) dtdr\,d\sigma d\bar\sigma\\
&\geq \frac{c_{N,s}}{2}\frac{C(\bar\beta)}{\beta^{N-2+2s}}\iint_{\partial\Omega\times\partial\Omega}\int_0^1\int_0^1\frac{(u(\Psi(\sigma,\beta t))-u(\Psi(\bar\sigma,\beta r)))^2}{|t-r|^{N+2s}}\, {\rm Jac}_{{\Psi}}(\sigma,\beta t){\rm Jac}_{{\Psi}}(\bar\sigma,\beta r) dtdr\,d\sigma d\bar\sigma\\
&\geq \frac{c_{N,s}}{2}\frac{C(\bar\beta)}{\beta^{N-2+2s}}\iint_{\partial\Omega\times\partial\Omega}\int_0^1\int_0^1\frac{(u(\Psi(\sigma,\beta t))-u(\Psi(\bar\sigma,\beta r)))^2}{|t-r|^{1+2s}}\, {\rm Jac}_{{\Psi}}(\sigma,\beta t){\rm Jac}_{{\Psi}}(\bar\sigma,\beta r) dtdr\,d\sigma d\bar\sigma,
\end{align*}
where we have used, in the last line, that $|t-s|<1$ which implies that $|t-s|^{N+2s}\leq |t-s|^{1+2s}$. By the one-dimensional fractional Hardy inequality, see \cite[Theorem 2.5]{loss2010hardy} , \eqref{diffeo-dom} and \eqref{jacobian-bounds}, we have
\begin{align*}
[u]_{s,\Omega_\beta}
&\geq c_{N,s}\kappa_{1,2s}\frac{C(\bar\beta)}{\beta^{N-2+2s}}(1+o(1))\iint_{\partial\Omega\times\partial\Omega}\int_0^1\frac{(u(\Psi(\sigma,\beta t)))^2}{t^{2s}}\,  dt\,d\sigma d\bar\sigma\\
&\geq
c_{N,s}\kappa_{1,2s}\frac{C(\bar\beta)}{\beta^{N-2}}(1+o(1))\iint_{\partial\Omega\times\partial\Omega}\int_0^1\frac{(u(\Psi(\sigma,\beta t)))^2}{\delta(\Psi(\sigma,\beta t))^{2s}}\,  dt\,d\sigma d\bar\sigma\\
&\geq c_{N,s}\kappa_{1,2s}\frac{C(\bar\beta)}{\beta^{N-2}}(1+o(1))\iint_{\partial\Omega\times\partial\Omega}\int_0^1\frac{(u(\Psi(\sigma,\beta t)))^2}{\delta(\Psi(\sigma,\beta t))^{2s}}\frac{{\rm Jac}_{{\Psi}}(\sigma,\beta t)}{{\rm Jac}_{{\Psi}}(\sigma,\beta t)}\,  dt\,d\sigma d\bar\sigma\\
&\geq c_{N,s}\kappa_{1,2s}\frac{C(\bar\beta)}{\beta^{N-2}}\frac{(1+o(1))}{\|{\rm Jac}_{{\Psi}}\|_{L^\infty(\Omega_\beta)}}\iint_{\partial\Omega\times\partial\Omega}\int_0^1\frac{(u(\Psi(\sigma,\beta t)))^2}{\delta(\Psi(\sigma,\beta t))^{2s}}{\rm Jac}_{{\Psi}}(\sigma,\beta t)\,  dt\,d\sigma d\bar\sigma,
\end{align*}
where  $o(1)\to 0$ as $\beta\to 0$. Finally, we have shown that
\begin{align*}
\frac{c_{N,s}}{2}\iint_{\Omega_{\beta}\times\Omega_{\beta}}\frac{(u(x)-u(y))^2}{|x-y|^{N+2s}}\ dxdy\geq
c_{N,s}\kappa_{1,2s}\frac{C(\bar\beta)}{\beta^{N-1}}\frac{(1+o(1))|\partial\Omega|}{\|{\rm Jac}_{{\Psi}}\|_{L^\infty(\Omega_{\bar\beta})}}\int_{\Omega_\beta}\frac{|u(x)|^2}{\delta(x)^{2s}}\, dx. 
\end{align*}
We may now choose $\beta$ sufficiently small such that 
\begin{equation*}
\frac{C(\bar\beta)}{\beta^{N-1}}\frac{(1+o(1))|\partial\Omega|}{\|{\rm Jac}_{{\Psi}}\|_{L^\infty(\Omega_{\bar\beta})}}\geq \frac{\pi^{\frac{N-1}{2}}\Gamma(\frac{1+2s}{2})}{\Gamma(\frac{N+2s}{2})}.
\end{equation*}
Hence,
\begin{align}\label{a100}
  \nonumber  \frac{c_{N,s}}{2}\iint_{\Omega_{\beta}\times\Omega_{\beta}}\frac{(u(x)-u(y))^2}{|x-y|^{N+2s}}\ dxdy&\geq c_{N,s}\frac{\pi^{\frac{N-1}{2}}\Gamma(\frac{1+2s}{2})}{\Gamma(\frac{N+2s}{2})}\kappa_{1,2s}\int_{\Omega_\beta}\frac{|u(x)|^2}{\delta(x)^{2s}}\, dx\\
    &=c_{N,s}\kappa_{N,2s}\int_{\Omega_\beta}\frac{|u(x)|^2}{\delta(x)^{2s}}\, dx=\mathfrak{h}_{N,s}\int_{\Omega_\beta}\frac{|u(x)|^2}{\delta(x)^{2s}}\, dx. 
\end{align}

Next, take $\varrho\in C^{\infty}(\Omega)$ such that $0\leq\varrho\leq1$, $\varrho\equiv1$ in $\Omega_{\frac{\beta}{4}}$ and $\varrho\equiv0$ in $\Omega\setminus\Omega_{\frac{\beta}{2}}$. Then, for $u\in H^s_0(\Omega)$,
    \begin{align}\label{a131}
   \nonumber \int_{\Omega}\frac{u^2}{\delta^{2s}}\ dx&=\int_{\Omega_{\frac{\beta}{4}}}\frac{u^2}{\delta^{2s}}\ dx+\int_{\Omega\setminus\Omega_{\frac{\beta}{4}}}\frac{u^2}{\delta^{2s}}\ dx\\
    \nonumber&=\int_{\Omega_{\frac{\beta}{4}}}\frac{(\varrho u)^2}{\delta^{2s}}\ dx+\int_{\Omega\setminus\Omega_{\frac{\beta}{4}}}\frac{u^2}{\delta^{2s}}\ dx\\
    &\leq \int_{\Omega}\frac{(\varrho u)^2}{\delta^{2s}}\ dx+\left(\frac{4}{\beta}\right)^{2s}\int_{\Omega}u^2\ dx.
    \end{align}
Now, by \eqref{a100}, we have 
    \begin{equation}\label{a15}
    \int_{\Omega}\frac{(\varrho u)^2}{\delta^{2s}}\ dx\leq\frac{1}{\mathfrak{h}_{N,s}}\frac{c_{N,s}}{2}\iint_{\Omega\times\Omega}\frac{((\varrho u)(x)-(\varrho u)(y))^2}{|x-y|^{N+2s}}\ dxdy.
    \end{equation}
Using the Elementary Identity
    \begin{equation*}
    ((\varrho u)(x)-(\varrho u)(y))^2=\varrho(x)\varrho(y)(u(x)-u(y))^2+(\varrho(x)-\varrho(y))(\varrho(x)u(x)^2-\varrho(y)u(y)^2),
    \end{equation*}
we have 
\begin{align}\label{a16}
\nonumber&\iint_{\Omega\times\Omega}\frac{((\varrho u)(x)-(\varrho u)(y))^2}{|x-y|^{N+2s}}\ dxdy\\
\nonumber&=\iint_{\Omega\times\Omega}\varrho(x)\varrho(y)\frac{(u(x)-u(y))^2}{|x-y|^{N+2s}}\ dxdy +\iint_{\Omega\times\Omega}\frac{(\varrho(x)-\varrho(y))}{|x-y|^{N+2s}}(\varrho(x)u(x)^2-\varrho(y)u(y)^2)\ dxdy\\
&\leq\iint_{\Omega\times\Omega}\frac{(u(x)-u(y))^2}{|x-y|^{N+2s}}\ dxdy +2\iint_{\Omega\times\Omega}\varrho(x)u(x)^2\frac{(\varrho(x)-\varrho(y))}{|x-y|^{N+2s}}\ dxdy.
\end{align}
We estimate the second term on the right-hand side of \eqref{a16} as 
\begin{align}\label{a17}
    \nonumber&\iint_{\Omega\times\Omega}\varrho(x)u(x)^2\frac{(\varrho(x)-\varrho(y))}{|x-y|^{N+2s}}\ dxdy\\
    \nonumber&=\iint_{|x-y|\geq1}\varrho(x)u(x)^2\frac{(\varrho(x)-\varrho(y))}{|x-y|^{N+2s}}\ dxdy+\iint_{|x-y|<1}\varrho(x)u(x)^2\frac{(\varrho(x)-\varrho(y))}{|x-y|^{N+2s}}\ dxdy\\
    \nonumber&\leq2|\Omega|\int_{\Omega}u^2\ dx+\int_{\Omega}u(x)^2\ dx\int_{B_1(x)}\frac{(\varrho(x)-\varrho(y))}{|x-y|^{N+2s}}\ dy\\
    \nonumber&\leq2|\Omega|\int_{\Omega}u^2\ dx+\frac{1}{2}\int_{\Omega}u(x)^2\ dx\int_{B_1}\frac{2\varrho(x)-\varrho(x+z)-\varrho(x-z)}{|z|^{N+2s}}\ dz\\
    \nonumber&\leq2|\Omega|\int_{\Omega}u^2\ dx+\frac{\|D^2\varrho\|_{L^{\infty}(B_1)}}{2}\int_{\Omega}u(x)^2\ dx\int_{B_1}|z|^{2-N-2s}\ dz\\
    &=\left(2|\Omega|+\frac{\|D^2\varrho\|_{L^{\infty}(B_1)}|\mathbb{S}^{N-1}|}{4(1-s)}\right)\int_{\Omega}u^2\ dx.
\end{align}
Thus, plugging \eqref{a17} into \eqref{a16}, we get 
    \begin{align}\label{a18}
    \nonumber&\iint_{\Omega\times\Omega}\frac{((\varrho u)(x)-(\varrho u)(y))^2}{|x-y|^{N+2s}}\ dxdy\\
    &\leq\iint_{\Omega\times\Omega}\frac{(u(x)-u(y))^2}{|x-y|^{N+2s}}\ dxdy+\left(4|\Omega|+\frac{\|D^2\varrho\|_{L^{\infty}(B_1)}|\mathbb{S}^{N-1}|}{2(1-s)}\right)\int_{\Omega}u^2\ dx.
    \end{align}
    Hence, it follows from \eqref{a15} that 
     \begin{equation}\label{a19}
    \int_{\Omega}\frac{(\varrho u)^2}{\delta^{2s}}\ dx\leq\frac{1}{\mathfrak{h}_{N,s}}\frac{c_{N,s}}{2}\iint_{\Omega\times\Omega}\frac{(u(x)-u(y))^2}{|x-y|^{N+2s}}\ dxdy+c_1\int_{\Omega}u^2\ dx,
    \end{equation}
    where $c_1=\frac{1}{\mathfrak{h}_{N,s}}\frac{c_{N,s}}{2}\left(2|\Omega|+\frac{\|D^2\varrho\|_{L^{\infty}(B_1)}|\mathbb{S}^{N-1}|}{2(1-s)}\right)$. Therefore, \eqref{a19} and \eqref{a131} yield
    \begin{equation*}
    \int_{\Omega}\frac{u^2}{\delta^{2s}}\ dx\leq\frac{1}{\mathfrak{h}_{N,s}}\frac{c_{N,s}}{2}\iint_{\Omega\times\Omega}\frac{(u(x)-u(y))^2}{|x-y|^{N+2s}}\ dxdy+(c_1+c_2)\int_{\Omega}u^2\ dx,
    \end{equation*}
    that is,
    \begin{equation*}
    \frac{c_{N,s}}{2}\iint_{\Omega\times\Omega}\frac{(u(x)-u(y))^2}{|x-y|^{N+2s}}\ dxdy\geq \mathfrak{h}_{N,s}\int_{\Omega}\frac{u^2}{\delta^{2s}}\ dx+\lambda\int_{\Omega}u^2\ dx
    \end{equation*}
    where $\lambda=-\mathfrak{h}_{N,s}(c_1+c_2)$ with $c_2=\left(\frac{4}{\beta}\right)^{2s}$. Thus, \eqref{lower-bound-jj} is established. This completes the proof. 
\end{proof}

\begin{remark}
	Define 
	\begin{equation}
	\lambda^*(s,\Omega)=\sup\{\lambda\in\R: \mathfrak{J}_{\lambda,s}(\Omega)=\mathfrak{h}_{N,s}\}.
	\end{equation}
	Then \eqref{a6} and \eqref{a7} are valid. As mentioned in the introduction, the function $\lambda\mapsto\mathfrak{J}_{\lambda,s}(\Omega)$ is concave and non-increasing on $\R$ and $\lim_{\lambda\to+\infty}\mathfrak{J}_{\lambda,s}(\Omega)=-\infty$. This fact and Lemma \ref{lema1} imply that $-\infty<\lambda^*(s,\Omega)<\infty$.
\end{remark}
We have the following existence result. 
\begin{lemma} \label{lema2}
Let $\Omega$ be a bounded open set in $\R^N$ of class $C^{1,1}$. Then any minimizing sequence for $\mathfrak{J}_{\lambda,s}(\Omega)$, normalized in $L^2(\Omega; \delta^{-2s})$, is relatively compact in $H^s_0(\Omega)$ for every $\lambda>\lambda^*(s,\Omega)$. In particular, the infimum is achieved. 
\end{lemma}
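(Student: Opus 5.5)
Fix $\lambda>\lambda^*(s,\Omega)$. By \eqref{a7} we have $\mathfrak{J}_{\lambda,s}(\Omega)<\mathfrak{h}_{N,s}$; this strict gap is the only input we use. Let $(u_n)\subset H^s_0(\Omega)$ be a minimizing sequence with $\int_\Omega u_n^2\delta^{-2s}\,dx=1$, so that
\[
[u_n]_s-\lambda\|u_n\|_{L^2(\Omega)}^2=\mathfrak{J}_{\lambda,s}(\Omega)+o(1).
\]
Since $\delta$ is bounded on $\Omega$, $\|u_n\|_{L^2}^2\le (\diam\Omega)^{2s}$. Hence $[u_n]_s$ is bounded, and $(u_n)$ is bounded in $H^s_0(\Omega)$. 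Up to a subsequence, $u_n\rightharpoonup u$ weakly in $H^s_0(\Omega)$, $u_n\to u$ strongly in $L^2(\Omega)$ by the compact embedding on a bounded Lipschitz set, and $u_n\to u$ a.e. Put $v_n=u_n-u$, so that $v_n\rightharpoonup0$ in $H^s_0$ and $v_n\to0$ in $L^2$.

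The key step is to show that the Hardy mass of $v_n$ vanishes. Weak convergence gives the splitting
\[
[u_n]_s=[u]_s+[v_n]_s+o(1),\qquad \int_\Omega\frac{u_n^2}{\delta^{2s}}=\int_\Omega\frac{u^2}{\delta^{2s}}+\int_\Omega\frac{v_n^2}{\delta^{2s}}+o(1),
\]
where the second uses weak convergence in the weighted space $L^2(\delta^{-2s})$, in which $(u_n)$ is bounded. Let $\mu^*:=\lambda^*(s,\Omega)$. By \eqref{a9}, applied to $v_n$,
\[
[v_n]_s\ge \mathfrak{h}_{N,s}\int_\Omega\frac{v_n^2}{\delta^{2s}}+\mu^*\|v_n\|_{L^2}^2=\mathfrak{h}_{N,s}\int_\Omega\frac{v_n^2}{\delta^{2s}}+o(1).
\]
By the definition of $\mathfrak{J}_{\lambda,s}(\Omega)$ applied to $u$,
\[
[u]_s-\lambda\|u\|_{L^2}^2\ge \mathfrak{J}_{\lambda,s}(\Omega)\int_\Omega\frac{u^2}{\delta^{2s}}.
\]
Set $t=\int_\Omega u^2\delta^{-2s}$, so that $\int_\Omega v_n^2\delta^{-2s}\to 1-t$. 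Adding the two bounds and using $\|u_n\|_{L^2}\to\|u\|_{L^2}$,
\[
\mathfrak{J}_{\lambda,s}(\Omega)=\lim_n\big([u_n]_s-\lambda\|u_n\|_{L^2}^2\big)\ge \mathfrak{J}_{\lambda,s}(\Omega)\,t+\mathfrak{h}_{N,s}(1-t)+\limsup_n\Big([v_n]_s-\mathfrak{h}_{N,s}\int_\Omega\frac{v_n^2}{\delta^{2s}}\Big).
\]
The last limsup is $\ge0$. Therefore $(\mathfrak{h}_{N,s}-\mathfrak{J}_{\lambda,s}(\Omega))(1-t)\le0$. Since the gap is strictly positive, $t\ge1$. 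Weak lower semicontinuity of the weighted norm gives $t\le1$, so $t=1$.

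Plugging $t=1$ back into the inequality forces $\limsup_n([v_n]_s-\mathfrak{h}_{N,s}\int v_n^2\delta^{-2s})\le0$, while $\int_\Omega v_n^2\delta^{-2s}\to0$. Hence $[v_n]_s\to0$, i.e.\ $u_n\to u$ strongly in $H^s_0(\Omega)$. Then $u$ satisfies the normalization and attains the infimum, since the quotient is continuous under strong $H^s_0$ convergence, using Hardy's inequality \eqref{hardy-inequality-in-domains} for the denominator.

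The main obstacle is justifying the lower bound on $[v_n]_s$ with the sharp constant $\mathfrak{h}_{N,s}$ up to an $L^2$ error. This is exactly what \eqref{a9} (equivalently Lemma \ref{J achieved} together with the finiteness of $\lambda^*$) supplies, and it is where the $C^{1,1}$ regularity enters. I will also check that $\lambda^*$ is finite and that the weighted-space Brezis–Lieb splitting is legitimate; the latter follows since $L^2(\delta^{-2s})$ is a Hilbert space containing $H^s_0$ continuously.
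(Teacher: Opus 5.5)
Your proof is correct and follows essentially the same route as the paper. You extract a weak limit, split both the Gagliardo seminorm and the weighted $L^2(\Omega;\delta^{-2s})$ norm, bound $[v_n]_s\ge \mathfrak{h}_{N,s}\int_\Omega v_n^2\delta^{-2s}\,dx+o(1)$, and use the strict gap $\mathfrak{J}_{\lambda,s}(\Omega)<\mathfrak{h}_{N,s}$ to force $\int_\Omega u^2\delta^{-2s}\,dx=1$ and $[v_n]_s\to0$. The only differences are cosmetic: you apply \eqref{a9} with $\lambda^*(s,\Omega)$ itself where the paper uses some $\bar\lambda<\lambda^*(s,\Omega)$, and you justify the weighted splitting through the continuous embedding $H^s_0(\Omega)\hookrightarrow L^2(\Omega;\delta^{-2s})$ rather than through strong local convergence.
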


\begin{proof}
	Let $\{u_k\}$ be a minimizing sequence for $\mathfrak{J}_{\lambda,s}(\Omega)$, normalized so that
	\begin{equation}\label{normalized-condition}
	\int_{\Omega}\frac{u_k^2}{\delta^{2s}}\ dx=1~~~\forall k.
	\end{equation}
Then the sequence is bounded in $H^s_0(\Omega)$ and consequently, up to a subsequence, there exists $u\in H^s_0(\Omega)$ such that
\begin{align}\label{d1}
u_k\rightharpoonup u~~\text{weakly in}~H^s_0(\Omega),~
u_k\to u~~\text{strongly in}~L^2(\Omega),~\text{and}~u_k\to u~~\text{a.e. in}~\Omega.
\end{align}
Set $v_k=u_k-u$. From \eqref{d1}, we have that
\begin{equation}\label{d2}
v_k\to 0~~\text{strongly in}~L^2(\Omega),~v_k\rightharpoonup 0~~ \text{weakly in}~H^s_0(\Omega),~ \text{and}~\frac{v_k}{\delta}\rightharpoonup 0~~ \text{weakly in}~ L^2(\Omega).
\end{equation}
Notice that the latter is a direct consequence of the fact that $\{\frac{v_k}{\delta}\}$ is bounded in $L^2(\Omega)$ and strongly converges to zero in $L^2_{loc}(\Omega)$. Using \eqref{d1} and \eqref{d2}, we get
\begin{align}\label{d3}
\nonumber \mathfrak{J}_{\lambda,s}(\Omega)+o(1)&=\frac{c_{N,s}}{2}\iint_{\Omega\times\Omega}\frac{(u_k(x)-u_k(y))^2}{|x-y|^{N+2s}}\ dxdy-\lambda\int_{\Omega}u_k^2\ dx\\
&=\frac{c_{N,s}}{2}\iint_{\Omega\times\Omega}\frac{(u(x)-u(y))^2}{|x-y|^{N+2s}}\ dxdy-\lambda\int_{\Omega}u^2\ dx\\
\nonumber&~~~~~~~+\frac{c_{N,s}}{2}\iint_{\Omega\times\Omega}\frac{(v_k(x)-v_k(y))^2}{|x-y|^{N+2s}}\ dxdy+o(1),
\end{align}
and
\begin{equation}\label{d4}
1=\int_{\Omega}\frac{u_k^2}{\delta^{2s}}\ dx=\int_{\Omega}\frac{u^2}{\delta^{2s}}\ dx+\int_{\Omega}\frac{v_k^2}{\delta^{2s}}\ dx+o(1).
\end{equation}
Let $\overline{\lambda}<\lambda^*(s,\Omega)$ so that $\mathfrak{J}_{\ov\lambda,s}(\Omega)=\mathfrak{h}_{N,s}$. Then 
\begin{equation}\label{d5}
\frac{c_{N,s}}{2}\iint_{\Omega\times\Omega}\frac{(v_k(x)-v_k(y))^2}{|x-y|^{N+2s}}\ dxdy-\overline{\lambda}\int_{\Omega}v_k^2\ dx\geq\mathfrak{h}_{N,s}\int_{\Omega}\frac{v_k^2}{\delta^{2s}}\ dx.
\end{equation}
By \eqref{d2} and \eqref{d4}, it follows from \eqref{d5} that
\begin{equation*}
\frac{c_{N,s}}{2}\iint_{\Omega\times\Omega}\frac{(v_k(x)-v_k(y))^2}{|x-y|^{N+2s}}\ dxdy\geq\mathfrak{h}_{N,s}\left(1-\int_{\Omega}\frac{u^2}{\delta^{2s}}\ dx\right)+o(1).
\end{equation*}
From this inequality and \eqref{d3}, we get
\begin{equation*}
\frac{c_{N,s}}{2}\iint_{\Omega\times\Omega}\frac{(u(x)-u(y))^2}{|x-y|^{N+2s}}\ dxdy+\mathfrak{h}_{N,s}\left(1-\int_{\Omega}\frac{u^2}{\delta^{2s}}\ dx\right)-\lambda\int_{\Omega}u^2\ dx\leq \mathfrak{J}_{\lambda,s}(\Omega),
\end{equation*}
which, together with (thanks to the definition of $\mathfrak{J}_{\lambda,s}(\Omega)$) 
\begin{equation*}
\frac{c_{N,s}}{2}\iint_{\Omega\times\Omega}\frac{(u(x)-u(y))^2}{|x-y|^{N+2s}}\ dxdy-\lambda\int_{\Omega}u^2\ dx\geq \mathfrak{J}_{\lambda,s}(\Omega)\int_{\Omega}\frac{u^2}{\delta^{2s}}\ dx,
\end{equation*}
yield
\begin{equation*}
\Big(\mathfrak{J}_{\lambda,s}(\Omega)-\mathfrak{h}_{N,s}\Big)\left(\int_{\Omega}\frac{u^2}{\delta^{2s}}\ dx-1\right)\leq0. 
\end{equation*}
Since $\mathfrak{J}_{\lambda,s}(\Omega)<\mathfrak{h}_{N,s}$ and $\displaystyle\int_{\Omega}\frac{u^2}{\delta^{2s}}\ dx\leq1$ we deduce from the above inequality that $\displaystyle\int_{\Omega}\frac{u^2}{\delta^{2s}}\ dx=1$. Therefore, by \eqref{d3}, $u$ is a minimizer for $\mathfrak{J}_{\lambda,s}(\Omega)$ and $v_k\to0$ strongly in $H^s_0(\Omega)$. Thus, $u_k\to u$ strongly in $H^s_0(\Omega)$.
\end{proof}

We have the following nonexistence result.
\begin{lemma} \label{lema3}
	Let $\Omega$ be a bounded open set in $\R^N$ of class $C^{1,1}$. If $\lambda\leq\lambda^*(s,\Omega)$, the infimum \eqref{a1} is not attained.
\end{lemma}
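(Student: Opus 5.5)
We argue by contradiction, as in Brezis--Marcus, and reduce everything to Theorem \ref{non-existence-theorem}. The case $\lambda<\lambda^*(s,\Omega)$ is elementary. Suppose $u\not\equiv0$ attains $\mathfrak{J}_{\lambda,s}(\Omega)$. Then $\mathfrak{J}_{\lambda,s}(\Omega)=\mathfrak{h}_{N,s}$, and $u$ is an admissible test function for $\mathfrak{J}_{\lambda',s}(\Omega)$ with $\lambda<\lambda'<\lambda^*(s,\Omega)$. This gives
\begin{equation*}
\mathfrak{J}_{\lambda',s}(\Omega)\leq \mathfrak{h}_{N,s}-(\lambda'-\lambda)\frac{\int_\Omega u^2\,dx}{\int_\Omega u^2\delta^{-2s}\,dx}<\mathfrak{h}_{N,s},
\end{equation*}
which contradicts \eqref{a6}. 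So only $\lambda=\lambda^*:=\lambda^*(s,\Omega)$ remains.

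For $\lambda=\lambda^*$ we first note that $\mathfrak{J}_{\lambda^*,s}(\Omega)=\mathfrak{h}_{N,s}$. This follows from the continuity of the concave function $\lambda\mapsto\mathfrak{J}_{\lambda,s}(\Omega)$ together with \eqref{a6}. Suppose $u$ is a minimizer.
\begin{itemize}
\item Since $[|u|]_s\leq[u]_s$, we may replace $u$ by $|u|$ and assume $u\geq0$.
\item The Euler--Lagrange equation \eqref{eigenvalue-problem} becomes
\begin{equation*}
(-\Delta)^s_\Omega u=\frac{\mathfrak{h}_{N,s}}{\delta^{2s}}u+\lambda^*u\quad\text{in }\Omega
\end{equation*}
in the weak sense.
\item We need $u\in C(\Omega)$. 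This should follow from interior regularity for the regional fractional Laplacian, since the potential $\mathfrak{h}_{N,s}\delta^{-2s}+\lambda^*$ is locally bounded in $\Omega$. We can then invoke a strong maximum principle for positive supersolutions, obtaining $u>0$ in $\Omega$.
\end{itemize}

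Next we rewrite the equation as
\begin{equation*}
(-\Delta)^s_\Omega u-\frac{\mathfrak{h}_{N,s}}{\delta^{2s}}u=\lambda^*u\geq-\frac{\eta}{\delta^{2s}}u,\qquad \eta:=(\lambda^*)^-\delta^{2s}.
\end{equation*}
Here $\eta$ is continuous and nonnegative on $\overline\Omega$. Moreover $\eta(\delta^{1-s}\log\delta)^2=(\lambda^*)^-\delta^{2}\log^2\delta\to0$ as $\delta\to0$, so \eqref{condition-nonexistence} holds. If $\lambda^*\geq0$, one can even take $\eta=0$. Theorem \ref{non-existence-theorem} then gives $u\equiv0$, which contradicts the normalization $\int_\Omega u^2\delta^{-2s}\,dx=1$.

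We expect the main obstacle to be this regularity step, namely that the weak minimizer belongs to $H^s_0(\Omega)\cap C(\Omega)$ and satisfies the inequality in the sense required by Theorem \ref{non-existence-theorem}. Our first attempt would be to localize: on any $\Omega'\Subset\Omega$, split the regional operator into the fractional Laplacian of a cutoff of $u$ plus a smooth tail term. Standard Hölder estimates for $(-\Delta)^s$ with bounded right-hand side then give local continuity. Positivity, if needed, would follow from a weak Harnack inequality for nonnegative supersolutions.
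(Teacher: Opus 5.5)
Your proposal is correct and follows essentially the same route as the paper. For $\lambda<\lambda^*(s,\Omega)$ you use the same monotonicity contradiction. For $\lambda=\lambda^*(s,\Omega)$ you pass to a nonnegative minimizer solving the Euler--Lagrange equation, get positivity from the strong maximum principle, and conclude via Theorem \ref{non-existence-theorem}. Your explicit choice $\eta=(\lambda^*)^-\delta^{2s}$, the continuity argument for $\mathfrak{J}_{\lambda^*,s}(\Omega)=\mathfrak{h}_{N,s}$, and the remark that $u\in C(\Omega)$ needs interior regularity simply make precise points the paper leaves implicit.
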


\begin{proof}
	We consider the following two cases: $\lambda<\lambda^*(s,\Omega)$ and $\lambda=\lambda^*(s,\Omega)$.
	
	Case 1: $\lambda<\lambda^*(s,\Omega)$. We argue by contradiction. Suppose that for some $\overline{\lambda}<\lambda^*(s,\Omega)$, the infimum \eqref{a1} is attained at some $\overline{u}\in H^s_0(\Omega)$. Assume that $\overline{u}$ is normalized so that
	\begin{equation*}
	\int_{\Omega}\frac{\overline{u}^2}{\delta^{2s}}\ dx=1 ~~\text{and}~~\frac{c_{N,s}}{2}\iint_{\Omega\times\Omega}\frac{(\overline{u}(x)-\overline{u}(y))^2}{|x-y|^{N+2s}}\ dxdy-\overline{\lambda}\int_{\Omega}\overline{u}^2\ dx=\mathfrak{h}_{N,s}.
	\end{equation*}
	Then for $\overline{\lambda}<\lambda<\lambda^*(s,\Omega)$, we have
	\begin{align*}
	\mathfrak{h}_{N,s}=\mathfrak{J}_{\lambda,s}(\Omega)&\leq \frac{c_{N,s}}{2}\iint_{\Omega\times\Omega}\frac{(\overline{u}(x)-\overline{u}(y))^2}{|x-y|^{N+2s}}\ dxdy-\lambda\int_{\Omega}\overline{u}^2\ dx\\
	&<\frac{c_{N,s}}{2}\iint_{\Omega\times\Omega}\frac{(\overline{u}(x)-\overline{u}(y))^2}{|x-y|^{N+2s}}\ dxdy-\overline{\lambda}\int_{\Omega}\overline{u}^2\ dx=\mathfrak{h}_{N,s},
	\end{align*}
	which is a contradiction. 
	
	Case 2: $\lambda=\lambda^*(s,\Omega)$. Assume that the infimum $\mathfrak{J}_{\lambda^*_s,s}(\Omega)$ is attained at some $u\in H^s_0(\Omega)$. Then\footnote{Observe that if (for some $\lambda$) the infimum in \eqref{a1} is achieved by a function $w$ then it is also achieved by $|w|$. Thus, $w$ does not change its sign in $\Omega$. In particular, we may assume that $w\geq0$.} $u\geq0$ in $\Omega$ and satisfies
	\begin{equation*}
	(-\Delta)^s_{\Omega}u-\lambda^*_s u=\mathfrak{J}_{\lambda^*_s,s}(\Omega)\frac{u}{\delta^{2s}}~~~\text{in}~~\Omega,
	\end{equation*}
	where $\lambda^*_s=\lambda^*(s,\Omega)$. By the strong maximum principle, $u>0$ in $\Omega$. However, since $\mathfrak{J}_{\lambda^*_s,s}(\Omega)=\mathfrak{h}_{N,s}$, then by Theorem \ref{non-existence-theorem}, we deduce that $u\equiv0$, which is a contradiction.  
\end{proof}

\begin{proof}[Proof of Theorem \ref{non-existence-theorem}]
We adapt the proof of Theorem III in Brezis-Marcus \cite{brezis1997hardy} for the local case. Assume by contradiction that there exists a non-negative function $u$ as stated in the lemma and that $u\neq0$. By the strong maximum principle, $u>0$ in $\Omega$.\\	
Define
\begin{equation*}
L:=(-\Delta)^s_{\Omega}-\frac{\mathfrak{h}_{N,s}}{\delta^{2s}}+\frac{\eta}{\delta^{2s}}
\end{equation*}
so that
\begin{equation}\label{r0}
Lu\geq0~~\text{in}~\Omega.
\end{equation}
Now, for $\alpha>0$, we let $\chi_{\alpha}$ to be the function defined by 
\begin{equation*}
\chi_{\alpha}(t)=\left\{
\begin{aligned}
& t^{\frac{2s-1}{2}}(1-\log t)^{-\alpha},~~~\text{if}~~0<t\leq 1\\
& t^{\frac{2s-1}{2}},~~~~~~~~~~~~~~~~~~~\text{if}~~t>1.
\end{aligned}
\right.
\end{equation*}
Then
	\begin{equation}\label{chi-prime}
	\chi_{\alpha}'(t)=\left\{
	\begin{aligned}
	& \frac{(2s-1)}{2}t^{-\frac{3-2s}{2}}(1-\log t)^{-\alpha}+\alpha t^{-\frac{3-2s}{2}}(1-\log t)^{-\alpha-1},~~~\text{if}~~0<t\leq 1\\
	& \frac{(2s-1)}{2}t^{-\frac{3-2s}{2}},\qquad\qquad\qquad\qquad\qquad~~~~~~~~~~~~~~~~~~~~~~\text{if}~~t>1
	\end{aligned}
	\right.
	\end{equation}
	and 
		\begin{equation}\label{chi-doube-prime}
	\chi_{\alpha}''(t)=\left\{
	\begin{aligned}
	&-\frac{(2s-1)(3-2s)}{4}t^{-\frac{5-2s}{2}}(1-\log t)^{-\alpha}+\alpha(\alpha+1) t^{-\frac{5-2s}{2}}(1-\log t)^{-\alpha-2},~0<t\leq 1\\
	&-\frac{(2s-1)(3-2s)}{4}t^{-\frac{5-2s}{2}},\qquad\qquad\qquad\qquad\qquad\qquad\qquad~~~~~~~~~~~~~~~~~~~~t>1. 
	\end{aligned}
	\right.
	\end{equation}
     Let also $v_{\alpha}: \Omega\to\R$ be the function defined by 
\begin{equation*}
v_{\alpha}=\chi_{\alpha}\circ\delta.   
\end{equation*}
We assume $\alpha>\frac{1}{2}$ such that $v_{\alpha}\in H^s_0(\Omega)$. 
Let $\eta\in C^{\infty}_c(\R^N)$ be such that $\eta\geq0$ in $\R^N$ and $\supp~ \eta\subseteq B_1$. We also assume that $\displaystyle\int_{\R^N}\eta(x)\ dx=1$.  For all $\varepsilon\in(0,1]$, we let $\eta_{\varepsilon}$ be the mollifier defined as 
\begin{equation*}
\eta_{\varepsilon}(x)=\varepsilon^{-N}\eta\left(\frac{x}{\varepsilon}\right)~~~~x\in\R^N.
\end{equation*}
Put $\delta_{\varepsilon}=\eta_{\varepsilon}*\delta$. Then, by construction $\delta_{\varepsilon}$ is a smooth function, i.e., $\delta_{\varepsilon}\in C^{\infty}(\R^N)$ and $\delta_{\varepsilon}\to\delta$ as $\varepsilon\to\infty.$ Next, we introduce the following function
\begin{equation*}
v_{\alpha,\varepsilon}=\chi_{\alpha}\circ\delta_{\varepsilon}.
\end{equation*}
Then $v_{\alpha,\varepsilon}\to v_{\alpha}$ as $\varepsilon\to0$. Notice that $\chi_{\alpha}(0):=\lim_{t\to0}\chi_{\alpha}(t)=0$ and that $(-\Delta)^s_{\Omega}\delta_{\varepsilon}(x)$ exists for all $x\in\Omega$ and $|(-\Delta)^s_{\Omega}\delta_{\varepsilon}(x)|\leq C$ for some $C>0$. Therefore all the assumptions of Lemma 2.3 in \cite{chen2014semilinear} are fulfilled. We can therefore apply this lemma to see that\footnote{A sketch of the proof of Lemma 2.3 in \cite{chen2014semilinear} shows that the same proof still hold if we assume $\Omega$ to be an open set, not necessarily connected, since in our case $\delta(\ov\Omega)$ is an interval for any open set.}
    \begin{equation}\label{chen-veron-identity-n}
(-\Delta)^s_{\Omega}v_{\alpha,\varepsilon}(x)=(\chi_{\alpha}'\circ\delta_\varepsilon)(x)(-\Delta)^s_{\Omega}\delta_\varepsilon(x)-\frac{\chi_{\alpha}''\circ\delta_\varepsilon(z_x)}{2}\int_{\Omega}\frac{(\delta_\varepsilon(y)-\delta_\varepsilon(x))^2}{|y-x|^{N+2s}}\ dy
\end{equation}
for all $x\in\ov\Omega$ and for some $z_x\in\ov\Omega$. Note that 
\begin{equation}\label{q1}
    D^{a}\chi_{\alpha}\circ\delta_{\varepsilon}\to D^{a}\chi_{\alpha}\circ\delta~~\textnormal{strongly in}~\R^N~\textnormal{for all}~ a\in\N_0^{N}.
\end{equation}
Moreover, by dominated convergence theorem,
\begin{equation}\label{q2}
    \int_{\Omega}\frac{(\delta_\varepsilon(y)-\delta_\varepsilon(x))^2}{|y-x|^{N+2s}}\ dy\to \int_{\Omega}\frac{(\delta(y)-\delta(x))^2}{|y-x|^{N+2s}}\ dy~~\textnormal{as}~~\varepsilon\to 0.
\end{equation}
Now, for all $\phi\in C^{\infty}_c(\Omega_{\beta})$,
\begin{align}\label{q3}
   \nonumber &\int_{\Omega_{\beta}}\phi(x) (-\Delta)^s_{\Omega}\delta_{\varepsilon}(x)\ dx=\int_{\Omega}\phi(x) (-\Delta)^s_{\Omega}\delta_{\varepsilon}(x)\ dx=\int_{\Omega}\delta_{\varepsilon}(x) (-\Delta)^s_{\Omega}\phi(x)\ dx\\
    &\to \int_{\Omega}\delta(x) (-\Delta)^s_{\Omega}\phi(x)\ dx=\int_{\Omega}\phi(x) (-\Delta)^s_{\Omega}\delta(x)\ dx=\int_{\Omega_{\beta}}\phi(x) (-\Delta)^s_{\Omega}\delta(x)\ dx.
\end{align}
By setting
\begin{align*}
U_{\alpha,\varepsilon}(x):=(\chi_{\alpha}'\circ\delta_\varepsilon)(x)(-\Delta)^s_{\Omega}\delta_\varepsilon(x)-\frac{\chi_{\alpha}''\circ\delta_\varepsilon(z_x)}{2}\int_{\Omega}\frac{(\delta_\varepsilon(y)-\delta_\varepsilon(x))^2}{|y-x|^{N+2s}}\ dy,
\end{align*}
and
\begin{equation*}
U_{\alpha}(x):=(\chi_{\alpha}'\circ\delta)(x)(-\Delta)^s_{\Omega}\delta(x)-\frac{\chi_{\alpha}''\circ\delta(z_x)}{2}\int_{\Omega}\frac{(\delta(y)-\delta(x))^2}{|y-x|^{N+2s}}\ dy,
\end{equation*}
we infer from \eqref{q1}, \eqref{q2}, \eqref{q3}, and \eqref{chen-veron-identity-n} that 
\begin{equation*}
\int_{\Omega_{\beta}}[U_{\alpha}(x)-(-\Delta)^s_{\Omega}v_{\alpha}(x)]\phi(x)\ dx=0,~~~\forall\phi\in C^{\infty}_c(\Omega_{\beta}).
\end{equation*}   
In particular,
\begin{equation*}
(-\Delta)^s_{\Omega}v_{\alpha}(x)=U_{\alpha}(x)~~~~~\text{ in }\Omega_{\beta}.
\end{equation*}
Recalling \eqref{chi-prime} and \eqref{chi-doube-prime}, the above equation leads to
\begin{align*}
\nonumber&(-\Delta)^s_{\Omega}v_{\alpha}(x)=\frac{(2s-1)}{2}\delta(x)^{-\frac{(3-2s)}{2}}(1-\log \delta(x))^{-\alpha-1}\left(1-\log\delta(x)+\frac{2}{2s-1}\alpha\right) (-\Delta)^s_{\Omega}\delta(x)\\
&+ \frac{(2s-1)(3-2s)}{8}\delta(z_x)^{-\frac{(5-2s)}{2}}(1-\log\delta(z_x))^{-\alpha-2}\Big((1-\log\delta(z_x))^2+\gamma_{\alpha,s}(1-\log\delta(z_x))\Big) K(x)\\
&-\frac{\alpha(\alpha+1)}{2}\delta(z_x)^{-\frac{(5-2s)}{2}}(1-\log\delta(z_x))^{-\alpha-2}K(x),
\end{align*}
where 
\begin{equation*}
\gamma_{\alpha,s}=\frac{8\alpha(1-s)}{(2s-1)(3-2s)}~~~~\textnormal{and}~~~~K(x):=\int_{\Omega}\frac{(\delta(y)-\delta(x))^2}{|y-x|^{N+2s}}\ dy.
\end{equation*}
Consequently, it holds that  
\begin{align*}
&Lv_{\alpha}(x)=(-\Delta)^s_{\Omega}v_{\alpha}(x)-\frac{\mathfrak{h}_{N,s}}{\delta(x)^{2s}}v_{\alpha}(x)+\frac{\eta(x)}{\delta(x)^{2s}}v_{\alpha}(x)\\
&=\frac{(2s-1)}{2}\delta(x)^{-\frac{(3-2s)}{2}}(1-\log \delta(x))^{-\alpha-1}\Big(1-\log\delta(x)+\frac{2}{2s-1}\alpha\Big) (-\Delta)^s_{\Omega}\delta(x)\\
&+ \frac{(2s-1)(3-2s)}{8}\delta(z_x)^{-\frac{(5-2s)}{2}}(1-\log\delta(z_x))^{-\alpha-2}\Big((1-\log\delta(z_x))^2+\gamma_{\alpha,s}(1-\log\delta(z_x))\Big) K(x)\\
&-\frac{\alpha(\alpha+1)}{2}\delta(z_x)^{-\frac{(5-2s)}{2}}(1-\log\delta(z_x))^{-\alpha-2}K(x)+(\eta(x)-\mathfrak{h}_{N,s})\delta(x)^{-\frac{(2s+1)}{2}}(1-\log\delta(x))^{-\alpha}.
\end{align*}
 Since $(-\Delta)^s_{\Omega}\delta$ behaves like $-\delta^{1-2s}$ in $\Omega_\beta$, see \cite[Proposition 2.1]{chen2025liouville} and (see Appendix \ref{appendix2}) 
 \begin{equation}\label{asymptotic-k}
K(x)\sim\delta(x)^{2-2s}~~~\textnormal{as}~~~\delta(x)\to0,
 \end{equation}
assumption \eqref{condition-nonexistence} implies that, for small $\delta$, the dominant term on the right-hand side of the above equation is $\frac{(2s-1)}{2}\delta(x)^{-\frac{(3-2s)}{2}}(1-\log \delta(x))^{-\alpha-1}(1-\log\delta(x)+\frac{2}{2s-1}\alpha) (-\Delta)^s_{\Omega}\delta(x)$ which is non-positive in $\Omega_{\beta}$. Therefore, for sufficiently small $\beta$, independent of $\alpha$, 
	\begin{equation}\label{r2}
	Lv_{\alpha}\leq0~~~\text{in}~~\Omega_{\beta}.
	\end{equation}
	Since $u$ is positive, there exists $\varepsilon>0$ such that for all $\alpha\in(\frac{1}{2},1)$,
	\begin{equation*}
	u\geq\varepsilon v_{\alpha}~~~\text{in}~~\Sigma_{\beta}=\{x\in\Omega: \delta(x)=\beta\}.
	\end{equation*}
	Define $w_{\alpha}=\varepsilon v_{\alpha}-u$. Then, $w_{\alpha}^+\in H^s_0(\Omega_{\beta})$ and $Lw_{\alpha}\leq0$ in $\Omega_{\beta}$, thanks to \eqref{r0} and \eqref{r2}.  Thus,    
	\begin{align}\label{r3}
	\int_{\Omega_{\beta}}Lw_{\alpha}w_{\alpha}^+\ dx=\int_{\Omega_{\beta}}(-\Delta)^s_{\Omega}w_{\alpha}w_{\alpha}^+\ dx-\mathfrak{h}_{N,s}\int_{\Omega_{\beta}}\frac{w_{\alpha}w_{\alpha}^+}{\delta^{2s}}\ dx+\int_{\Omega_{\beta}}\eta\frac{w_{\alpha}w_{\alpha}^+}{\delta^{2s}}\ dx \leq0.
	\end{align}
	Now, utilizing the elementary identity $(a(x)-a(y))(a^+(x)-a^+(y))\geq(a^+(x)-a^+(y))^2$, we have
	\begin{align*}
	\int_{\Omega_{\beta}}(-\Delta)^s_{\Omega}w_{\alpha}w_{\alpha}^+\ dx&=\int_{\Omega}(-\Delta)^s_{\Omega}w_{\alpha}w_{\alpha}^+\ dx\\
    &=\frac{c_{N,s}}{2}\iint_{\Omega\times\Omega}\frac{(w_{\alpha}(x)-w_{\alpha}(y))(w_{\alpha}^+(x)-w_{\alpha}^+(y))}{|x-y|^{N+2s}}\ dxdy\\
	&\geq\frac{c_{N,s}}{2}\iint_{\Omega\times\Omega}\frac{(w_{\alpha}^+(x)-w_{\alpha}^+(y))^2}{|x-y|^{N+2s}}\ dxdy\\
	&\geq \frac{c_{N,s}}{2}\iint_{\Omega_{\beta}\times\Omega_{\beta}}\frac{(w_{\alpha}^+(x)-w_{\alpha}^+(y))^2}{|x-y|^{N+2s}}\ dxdy.
	\end{align*}
Thus, for $\beta$ sufficiently small, 
	\begin{align*}
		\int_{\Omega_{\beta}}(-\Delta)^s_{\Omega}w_{\alpha}w_{\alpha}^+\ dx&-\mathfrak{h}_{N,s} \int_{\Omega_{\beta}}\frac{(w_{\alpha}^+)^2}{\delta^{2s}}\ dx\\
		&\geq \frac{c_{N,s}}{2}\iint_{\Omega_{\beta}\times\Omega_{\beta}}\frac{(w_{\alpha}^+(x)-w_{\alpha}^+(y))^2}{|x-y|^{N+2s}}\ dxdy-\mathfrak{h}_{N,s}\int_{\Omega_{\beta}}\frac{(w_{\alpha}^+)^2}{\delta^{2s}}\ dx. 
	\end{align*}
The right-hand side of the above inequality is nonnegative, thanks to \eqref{a100}. Taking this into account, we deduce from \eqref{r3} that $w_{\alpha}^+=0$, that is, $u\geq\varepsilon v_{\alpha}$ in $\Omega_{\beta}$ for every $\alpha\in (\frac{1}{2},1)$. Thus, $u\geq\varepsilon(\delta^{2s-1}(1-\log\delta)^{-1})^{\frac{1}{2}}$ in $\Omega_{\beta}$ and consequently $\frac{u}{\delta^s}\notin L^2(\Omega_{\beta})$\footnote{Note that, sine $u\in H^s_0(\Omega)$, by fractional Hardy inequality we have $\frac{u}{\delta^s}\in L^2(\Omega)$.}, which contradicts the fact that $u\in H^s_0(\Omega)$. The proof is therefore finished.
\end{proof}
We are now ready to prove Theorem \ref{first-main-result}.

\begin{proof}[Proof of Theorem \ref{first-main-result} (completed)]
The proof follows from Lemmas \ref{lema1}, \ref{J achieved}, \ref{lema2} \& \ref{lema3}.
\end{proof}

\begin{proof}[Proof of Corollary \ref{first-main-result-corl}]
The first part of Corollary \ref{first-main-result-corl} is a straightforward application of Theorem \ref{first-main-result}. Let us now turn our attention to the second part. Let us assume that \eqref{a3} admits a minimizer $u\in H^s_0(\Omega)$, in particular, this implies that $\mu_{N,s}(\Omega)>0$. Since $|u|\in H^s_0(\Omega)$ and 
\[
\mu_{N,s}(\Omega)\le \big[|u|\big]_s\le [u]_s=\mu_{N,s}(\Omega),
\]
we can suppose, without loss of generality,  that $u$ is non-negative. Here we used the notation
$$[u]_s:=\frac{c_{N,s}}{2}\iint_{\Omega\times\Omega}\frac{(u(x)-u(y))^2}{|x-y|^{N+2s}}\ dxdy.$$
\par
Moreover, by standard arguments, $u$ is a minimizer of the functional
\[
\mathcal{I}(\varphi) := \frac{1}{2} \, [ \varphi ]_s- \frac{\mu_{N,s}(\Omega)}{2} \int_{\Omega} \frac{\varphi^2}{\delta^{2s}} \, dx, \qquad \text{ for all } \varphi \in H^{s}_0(\Omega),
\]
and, due to the normalization
\begin{align}\label{normalization*}
        \displaystyle\int_\Omega\frac{\varphi^2}{\delta^{2s}} \, dx=1,
\end{align}
$u$ is non-trivial. This implies that $u$ is a non-trivial non-negative weak solution to the Dirichlet problem 
\begin{align}\label{euler-lag-proof}
(-\Delta)^s_{\Omega}u=\mu_{N,s}(\Omega)~\frac{u}{\delta^{2s}}\text{ in }\Omega \quad \text{ and }\quad u=0 \text{ on } \partial\Omega.
\end{align}
By the strong maximum principle (see \cite[Proposition 1.7]{abatangelo2023hopf}), we obtain that $u >0$ almost everywhere in $\Omega$.
\par
Next, we show the uniqueness for the positive minimizer of $\mu_{N,s}(\Omega)$. For this, it is sufficient to use Picone's type inequality for regional fractional Laplacian.
Let us take $u, v \in H^s_0(\Omega)$ two positive minimizers of $\mu_{N,s}(\Omega)$. By Proposition 2.3 in \cite{frank2009sharp}, we have
\begin{align*}
[u]_s-\mu_{N,s}(\Omega)\int_\Omega\frac{u^2}{\delta^{2s}}\ dx\geq\frac{c_{N,s}}{4}\iint_{\Omega\times\Omega}\frac{(w(x)-w(y))^2}{|x-y|^{N+2s}}\,v(x)v(y) dxdy,
\end{align*}
where $w$ is such that $u=vw$. Since $u$ is a minimizer we have
\begin{align*}
\iint_{\Omega\times\Omega}\frac{(w(x)-w(y))^2}{|x-y|^{N+2s}}\,v(x)v(y) dxdy=0.
\end{align*}
Recalling that $v> 0$, we deduce that $w$ is constant. Hence $u= C v$ and then by the normalization assumption, \eqref{normalization*}, $C=1$. Thus $u=v$. This concludes the proof of the corollary.
\end{proof}

\section{Proof of Theorems \ref{Geom-convex-Hardy}~\&~\ref{second-main-result}}\label{section:proof of second main result}

In this section, we prove Theorems \ref{Geom-convex-Hardy}~\&~\ref{second-main-result}. The main ingredient is a new improved fractional Hardy inequality that we establish next.

\subsection{Geometric Hardy inequality}
In what follows, we will use the following notation. For $\nu\in\mathbb{S}^{N-1}$and $x\in\Omega$ we let: 
\begin{align*}
& \tau_{\nu}(x)=\inf\{t>0: x+t\nu\notin\Omega\},\quad d_{\nu,\Omega}(x)=\inf\{|t|: x+t\nu\notin\Omega \}, \\ \vspace{5mm}
& D_{\nu}(x)=\tau_\nu(x)+\tau_{-\nu}(x),\; \mathfrak{D}_\nu(x)=\sup\{|t|: x+t\nu\in\Omega \},\\ \vspace{5mm}
&\Omega_x:=\big\{z\in\Omega:\, x+t(z-x)\in\Omega,\, \forall t\in[0,1]\big\}.
\end{align*}
Observe that 
\begin{align*}
d_{\nu,\Omega}(x)=\min_{\Omega}\left\{\tau_{\nu}(x),\tau_{-\nu}(x)\right\},\quad
\delta(x)=\inf_{\mathbb{S}^{N-1}}\tau_\nu(x)\quad\text{and} \quad 
\mathfrak{D}_\nu(x)\leq D_{\nu}(x).\qquad\qquad
\end{align*}
We also have the following key observation.
\begin{align*}
|\Omega_x|:=\int_{\Omega_x}1\,dx&=\int_{\mathbb{S}^{N-1}}\int_0^{\tau_\nu(x)}t^{N-1}\,dtd\sigma(\nu)\\
&=\frac{1}{N}\int_{\mathbb{S}^{N-1}}\tau_\nu(x)^{N}\,d\sigma(\nu).
\end{align*}
Thus
\begin{align*}
|\Omega_x|=\frac{1}{N}\int_{\mathbb{S}^{N-1}}\tau_\nu(x)^{N}\,d\sigma(\nu).
\end{align*}
Let us now recall some notations from \cite{loss2010hardy}. For $0<\alpha<2$ we let
\begin{align*}
&M_{\alpha}(x)^{-\alpha}=\left(\int_{\mathbb{S}^{N-1}}|\cos(e,\nu)|^{\alpha}\,d\sigma(\nu)\right)^{-1}\displaystyle\int_{\mathbb{S}^{N-1}}\bigg(\frac{1}{d_{\nu,\Omega}(x)}+\frac{1}{\mathfrak{D}_\nu(x)}\bigg)^{\alpha}\,d\sigma(\nu),\\
\text{ and }&\nonumber\\
&m_{\alpha}(x)^{-\alpha}=\left(\int_{\mathbb{S}^{N-1}}|\cos(e,\nu)|^{\alpha}\,d\sigma(\nu)\right)^{-1}\displaystyle\int_{\mathbb{S}^{N-1}}\frac{1}{d_{\nu,\Omega}(x)^{\alpha}}\,d\sigma(\nu).
\end{align*}
In what follows we normalize $d\sigma(\nu)$ by letting $d\omega(\nu)=\displaystyle\frac{d\sigma(\nu)}{|\mathbb{S}^{N-1}|}$ so that $\displaystyle\int_{\mathbb{S}^{N-1}}d\omega(\nu)=1$, then we have 
\begin{align}
\nonumber&M_{\alpha}(x)^{-\alpha}=\frac{\sqrt{\pi}\,\Gamma(\frac{N+\alpha}{2})}{\Gamma(\frac{N}{2})\Gamma(\frac{1+\alpha}{2})}\displaystyle\int_{\mathbb{S}^{N-1}}\bigg(\frac{1}{d_{\nu,\Omega}(x)}+\frac{1}{\mathfrak{D}_\nu(x)}\bigg)^{\alpha}\,d\omega(\nu),\\
\text{ and }&\nonumber\\
\nonumber&m_{\alpha}(x)^{-\alpha}=\frac{\sqrt{\pi}\,\Gamma(\frac{N+\alpha}{2})}{\Gamma(\frac{N}{2})\Gamma(\frac{1+\alpha}{2})}\displaystyle\int_{\mathbb{S}^{N-1}}\frac{1}{d_{\nu,\Omega}(x)^{\alpha}}\,d\omega(\nu).
\end{align}
Here we use the fact that for any $e\in\mathbb{S}^{N-1}$
\begin{align}\label{cos-intg}
\int_{\mathbb{S}^{N-1}}|\cos(e,\nu)|^{\alpha}\,d\omega(\nu)=\frac{\Gamma(\frac{N}{2})\Gamma(\frac{1+\alpha}{2})}{\sqrt{\pi}\,\Gamma(\frac{N+\alpha}{2})}.
\end{align}
Furthermore, it holds that
\begin{align}\label{omega_x}
|\Omega_x|=\frac{|\mathbb{S}^{N-1|}}{N}\int_{\mathbb{S}^{N-1}}\tau_\nu(x)^{N}\,d\omega(\nu).
\end{align}
Finally, for a given convex open set $\Omega$ a classical geometric argument shows that 
\begin{align*}
m_{\alpha}(x)\leq \delta(x),\quad \text{ for all } x\in\Omega.
\end{align*}
Indeed, for $x\in\Omega$ let $e\in\mathbb{S}^{N-1}$ be such that $d_{e,\Omega}(x)=\delta(x)$ then 
\begin{align*}
d_{\nu,\Omega}(x)\cos(e,\nu)\leq\delta(x),\quad \text{ for all }\, \nu\in\mathbb{S}^{N-1}.
\end{align*}
Hence, 
\begin{align}\label{m_d_geom}
\int_{\mathbb{S}^{N-1}}\frac{1}{d_{\nu,\Omega}(x)^{\alpha}}\,d\omega(\nu)&\geq \int_{\mathbb{S}^{N-1}}|\cos(e,\nu)|^{\alpha}\frac{1}{\delta(x)^{\alpha}}\,d\omega(\nu) \nonumber\\
&\geq \int_{\mathbb{S}^{N-1}}|\cos(e,\nu)|^{\alpha}\,d\omega(\nu)\,\frac{1}{\delta(x)^{\alpha}}\\
&=\frac{\Gamma(\frac{N}{2})\Gamma(\frac{1+\alpha}{2})}{\sqrt{\pi}\,\Gamma(\frac{N+\alpha}{2})}\frac{1}{\delta(x)^{\alpha}},\nonumber
\end{align}
where we have used \eqref{cos-intg} in the last equality.

Next, we show that the following geometric fractional Hardy-type inequality holds.
\begin{thm}\label{Geom-Hardy-2} 
Let $\Omega\subset\R^N$ be an open set. Then
\begin{equation}\label{geo}
\frac{c_{N,s}}{2}\iint_{\Omega\times\Omega}\frac{(u(x)-u(y))^2}{|x-y|^{N+2s}}\ dxdy\geq\mathfrak{h}_{N,s}\int_{\Omega}\frac{u^2}{m_{2s}^{2s}}\ dx+a(N,s)\int_{\Omega}\frac{u^2}{|\Omega_x|^{\frac{2s}{N}}}\ dx,
\end{equation}
for all $u\in H^s_0(\Omega)$, where $a(N,s)$ is the constant given in \eqref{geometrichardyconstant0}.
\end{thm}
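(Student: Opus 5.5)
We follow the Loss--Sloane strategy from \cite{loss2010hardy} and reduce \eqref{geo} to a one-dimensional inequality on segments, which we then integrate over directions. The starting point is the Loss--Sloane representation of the Gagliardo seminorm in polar-type coordinates:
\begin{equation*}
\iint_{\Omega\times\Omega}\frac{(u(x)-u(y))^2}{|x-y|^{N+2s}}\,dxdy=\frac12\int_{\mathbb{S}^{N-1}}\int_{\{x:\,x\cdot\nu=0\}}\int_{\{t:\,x+t\nu\in\Omega\}}\int_{\{r:\,x+r\nu\in\Omega\}}\frac{(u(x+t\nu)-u(x+r\nu))^2}{|t-r|^{1+2s}}\,dr\,dt\,d\mathcal{H}^{N-1}(x)\,d\sigma(\nu).
\end{equation*}
Each line $\{x+t\nu\}$ meets $\Omega$ in a union of open intervals. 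By discarding the cross terms between distinct intervals, which is legitimate since the integrand is nonnegative, we get a lower bound by a sum of one-dimensional seminorms over single intervals $I=(a,b)$.

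On a single interval we use the sharp one-dimensional inequality with a remainder. For an interval of length $L=b-a$ and $d_I(t)=\min(t-a,b-t)$, Loss--Sloane \cite[Theorem 2.5 and its proof]{loss2010hardy} give
\begin{equation*}
\frac12\iint_{I\times I}\frac{(f(t)-f(r))^2}{|t-r|^{1+2s}}\,dtdr\geq\kappa_{1,2s}\int_I f^2\Big(\frac1{d_I}+\frac{1}{L-d_I}\Big)^{2s}dt\ \ \text{or at least}\ \geq\kappa_{1,2s}\int_I\frac{f^2}{d_I^{2s}}dt+\kappa'\int_I\frac{f^2}{L^{2s}}dt.
\end{equation*}
The remainder constant $\kappa'$ should be extracted by expanding $(1/d+1/(L-d))^{2s}$ and using $d\le L/2$. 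This is what should produce the factor $2^{-2s}(2^{2s}-1)$ in $a(N,s)$: one checks $(1/d+1/(L-d))^{2s}-d^{-2s}\geq (2^{2s}-1)\,2^{-2s}\cdot 2^{2s}L^{-2s}$-type bounds through convexity. At the point $x+t\nu$ we have $d_I=d_{\nu,\Omega}(x+t\nu)$ and $L=D_\nu(x+t\nu)$, modulo the cross-term issue (the interval containing the point is exactly the one with endpoints $\pm\tau_{\pm\nu}$).

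Next we reassemble, undoing the slicing. This yields
\begin{equation*}
[u]_s\geq \frac{c_{N,s}}2\kappa_{1,2s}\int_\Omega u(x)^2\int_{\mathbb{S}^{N-1}}\Big(\frac{1}{d_{\nu,\Omega}(x)^{2s}}+\frac{(2^{2s}-1)}{D_\nu(x)^{2s}}\Big)d\sigma(\nu)\,dx
\end{equation*}
up to the precise constant. Using the normalisation by $\int|\cos(e,\nu)|^{2s}$ from \eqref{cos-intg} and the identity $c_{N,s}\kappa_{N,2s}=\mathfrak{h}_{N,s}$ (as in \eqref{a100}), the first term is exactly $\mathfrak{h}_{N,s}\int u^2 m_{2s}^{-2s}$. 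For the second term, we bound $D_\nu^{-2s}\ge 2^{-2s}\big(\tfrac{\tau_\nu^{-2s}+\tau_{-\nu}^{-2s}}{2}\big)$ via $D_\nu\le 2\max(\tau_\nu,\tau_{-\nu})$, or more simply use symmetry in $\nu\mapsto-\nu$ together with convexity of $t\mapsto t^{-2s}$. We then apply Jensen's inequality for the convex map $y\mapsto y^{-2s/N}$ on the probability measure $d\omega$:
\begin{equation*}
\int_{\mathbb{S}^{N-1}}\tau_\nu^{-2s}\,d\omega=\int(\tau_\nu^{N})^{-2s/N}d\omega\geq\Big(\int\tau_\nu^N d\omega\Big)^{-2s/N}=\Big(\frac{N|\Omega_x|}{|\mathbb{S}^{N-1}|}\Big)^{-2s/N}
\end{equation*}
by \eqref{omega_x}. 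This gives the factor $(N/|\mathbb{S}^{N-1}|)^{-2s/N}|\Omega_x|^{-2s/N}$, and combining the constants gives $a(N,s)$.

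The main obstacle is the sharp one-dimensional remainder on an interval, namely obtaining exactly the constant $2^{-2s}(2^{2s}-1)\kappa_{1,2s}$ times $D_\nu^{-2s}$ in a form compatible with the Jensen step. We will first try the explicit Loss--Sloane weight $(1/d+1/(L-d))^{2s}$ and the elementary inequality $(1/d+1/(L-d))^{2s}-d^{-2s}\ge(2^{2s}-1)L^{-2s}$ for $d\le L/2$. This inequality holds with equality at $d=L/2$, and the function is decreasing in $d$, as should be checked. A density argument from $C_c^\infty(\Omega)$ to $H^s_0(\Omega)$ then finishes the proof via Fatou's lemma.
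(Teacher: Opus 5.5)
Your architecture is the paper's. You use a Loss--Sloane lower bound by a direction-averaged one-dimensional weight, which you rederive by slicing and discarding cross terms while the paper quotes it as \eqref{loss-Hardy}. You then split off $d_{\nu,\Omega}^{-2s}$ with an elementary inequality, and finish with a Jensen/H\"older step based on $|\Omega_x|=\frac{|\mathbb{S}^{N-1}|}{N}\int_{\mathbb{S}^{N-1}}\tau_\nu^N\,d\omega$. Your bookkeeping for the first term is correct: $\frac{c_{N,s}}{2}\kappa_{1,2s}\int_{\mathbb{S}^{N-1}}d_{\nu,\Omega}^{-2s}\,d\sigma=\mathfrak{h}_{N,s}\,m_{2s}^{-2s}$.

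The genuine gap is the last step. The pointwise bound $D_\nu^{-2s}\ge 2^{-2s}\cdot\frac12(\tau_\nu^{-2s}+\tau_{-\nu}^{-2s})$ is false. The estimate $D_\nu\le 2\max(\tau_\nu,\tau_{-\nu})$ only gives $D_\nu^{-2s}\ge 2^{-2s}\min(\tau_\nu^{-2s},\tau_{-\nu}^{-2s})$. Convexity of $t\mapsto t^{-2s}$ gives exactly the opposite inequality, $(\tau_\nu+\tau_{-\nu})^{-2s}\le 2^{-2s}\cdot\frac12(\tau_\nu^{-2s}+\tau_{-\nu}^{-2s})$. Integrating with the symmetry $\nu\mapsto-\nu$ therefore yields $\int D_\nu^{-2s}\,d\omega\le 2^{-2s}\int\tau_\nu^{-2s}\,d\omega$, which is the wrong direction.

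No lower bound of the form you need can hold. In a ball, $\int\tau_\nu^{-2s}\,d\omega\gtrsim\delta(x)^{-2s}$, whereas $\int D_\nu^{-2s}\,d\omega=O(\delta(x)^{\frac12-s})$. So the argument cannot pass through $\int\tau_\nu^{-2s}\,d\omega$ followed by Jensen for $y\mapsto y^{-2s/N}$; that Jensen inequality is true but bounds the wrong quantity.

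What works is the paper's Lemma~\ref{D-lower-estimate}: apply Jensen to $D_\nu$ itself, and H\"older afterwards,
\[
\int_{\mathbb{S}^{N-1}}D_\nu^{-2s}\,d\omega\ \ge\ \Big(\int_{\mathbb{S}^{N-1}}D_\nu\,d\omega\Big)^{-2s}=\Big(2\int_{\mathbb{S}^{N-1}}\tau_\nu\,d\omega\Big)^{-2s}\ \ge\ 2^{-2s}\Big(\int_{\mathbb{S}^{N-1}}\tau_\nu^{N}\,d\omega\Big)^{-\frac{2s}{N}}.
\]
The factor $2^{-2s}$ in $a(N,s)$ comes from this step. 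It does not come from the one-dimensional remainder, which contributes only $2^{2s}-1$.

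Your proposed verification of $(1/d+1/(L-d))^{2s}-d^{-2s}\ge(2^{2s}-1)L^{-2s}$ for $d\le L/2$ is also incorrect, although the inequality itself holds. At $d=L/2$ the left side equals $2^{2s}(2^{2s}-1)L^{-2s}$, so there is no equality there. The left side is also not monotone on $(0,L/2]$: it blows up as $d\to 0$ and has positive derivative at $d=L/2$. Prove it instead as in Lemma~\ref{alg-inq}. The map $t\mapsto(t+1)^{2s}-t^{2s}$ is nondecreasing, so $(a+b)^{2s}\ge a^{2s}+(2^{2s}-1)b^{2s}$ for $a=1/d\ge b=1/(L-d)$. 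Then use $(L-d)^{-2s}\ge L^{-2s}$.

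With these two repairs your constants combine to exactly $a(N,s)$. Your interval-by-interval route then has one advantage over the paper's. Your remainder involves $\max(\tau_\nu,\tau_{-\nu})\le D_\nu$ for the chord through $x$, which is valid in every open set. The paper instead uses $\mathfrak{D}_\nu=\sup\{|t|:x+t\nu\in\Omega\}$ together with $\mathfrak{D}_\nu\le D_\nu$. That comparison is fine when the line through $x$ meets $\Omega$ in a single interval (e.g.\ for convex $\Omega$), but it fails in general. For instance, at $x=0$ in $B_\varepsilon(0)\cup\{R<|y|<2R\}$ one has $\mathfrak{D}_\nu=2R$ while $D_\nu=2\varepsilon$.
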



Before proving Theorem \ref{Geom-Hardy-2}, we need some auxiliary lemmas.
\begin{lemma}\label{alg-inq}
Let $a,b,\theta$ be real numbers such that $a\geq b>0$ and $1<\theta\leq2$. Then 
\begin{align}\label{alg-inq*}
(a+b)^\theta\geq a^\theta+(2^\theta-1) b^\theta.
\end{align}
\end{lemma}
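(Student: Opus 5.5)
By homogeneity we reduce the inequality to one variable. Dividing \eqref{alg-inq*} by $b^\theta$ and setting $t=a/b\geq 1$, it suffices to show
\begin{equation*}
f(t):=(t+1)^\theta-t^\theta-(2^\theta-1)\geq 0\qquad\text{for all } t\geq 1.
\end{equation*}

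First, at $t=1$ we have $f(1)=2^\theta-1-(2^\theta-1)=0$, so equality holds when $a=b$.

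Next we show that $f$ is non-decreasing on $[1,\infty)$. Its derivative is
\begin{equation*}
f'(t)=\theta\big((t+1)^{\theta-1}-t^{\theta-1}\big).
\end{equation*}
Since $\theta>1$, the map $r\mapsto r^{\theta-1}$ is increasing on $(0,\infty)$, so $(t+1)^{\theta-1}>t^{\theta-1}$ and hence $f'(t)>0$ for every $t>0$. Combined with $f(1)=0$, this gives $f(t)\geq 0$ for $t\geq1$, which is the claim.

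The argument only uses $\theta>1$; the restriction $\theta\leq 2$ is not needed here and presumably reflects how the lemma is applied later, with $\theta=2s\in(1,2)$. The main point requiring care is the orientation: the normalization must be chosen as $t=a/b\geq1$, with the coefficient $2^\theta-1$ attached to the smaller number $b$. If the roles of $a$ and $b$ were swapped, the inequality would fail as $b/a\to\infty$, since then the right-hand side grows like $(2^\theta-1)b^\theta$ while the left-hand side grows only like $b^\theta$.
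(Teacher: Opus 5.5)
Your proof is correct and follows the paper's route: reduce to one variable with $t=a/b\geq 1$, observe that $(t+1)^\theta-t^\theta$ is non-decreasing because $\theta>1$, and compare with its value $2^\theta-1$ at $t=1$. Your further remarks also hold: the hypothesis $\theta\leq 2$ is not used, and the ordering $a\geq b$ cannot be reversed.
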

\begin{proof}
Let $t=\frac{a}{b}$. Then \eqref{alg-inq*} reduces to
\begin{align*}
(t+1)^\theta\geq t^\theta+2^{\theta}-1.
\end{align*}
Let $f(t):=(t+1)^\theta-t^\theta$, then $f'(t):=\theta(t+1)^{\theta-1}-\theta t^{\theta-1}\geq 0$. Thus $f$ is non-decreasing and
\begin{align*}
f(t)\geq f(1)=2^\theta-1.
\end{align*}
and the lemma follows.    
\end{proof}
\begin{lemma}\label{D-lower-estimate} We have
\begin{equation*}
\displaystyle\int_{\mathbb{S}^{N-1}}\bigg(\frac{1}{\mathfrak{D}_\nu(x)}\bigg)^{2s}\,d\omega(\nu)\geq2^{-2s}\left(\frac{N|\Omega_x|}{|\mathbb{S}^{N-1}|}\right)^{-\frac{2s}{N}}.
\end{equation*}
\end{lemma}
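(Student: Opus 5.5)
The plan is to compare $\mathfrak{D}_\nu(x)$ with the quantities $\tau_{\pm\nu}(x)$, whose $N$-th powers integrate to $|\Omega_x|$ by \eqref{omega_x}, and then to apply Jensen's inequality on the probability space $(\mathbb{S}^{N-1},d\omega)$.

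\textbf{Step 1 (pointwise bound).} Since $x+t\nu\in\Omega$ for all $t\in(-\tau_{-\nu}(x),\tau_\nu(x))$, the definition of $\mathfrak{D}_\nu$ as a supremum gives $\mathfrak{D}_\nu(x)\ge \max\{\tau_\nu(x),\tau_{-\nu}(x)\}$. Combining the relation $\mathfrak{D}_\nu\le D_\nu$ recorded above with this estimate, I would use the cruder but symmetric bound
\begin{equation*}
\mathfrak{D}_\nu(x)\le 2\max\{\tau_\nu(x),\tau_{-\nu}(x)\}\le 2\Big(\tau_\nu(x)^N+\tau_{-\nu}(x)^N\Big)^{1/N}.
\end{equation*}
This is where the factor $2^{-2s}$ should come from. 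The statement needs an \emph{upper} bound on $\mathfrak{D}_\nu$ in terms of the $\tau$'s, and this is the delicate point. For nonconvex $\Omega$, $\mathfrak{D}_\nu(x)$ may exceed $D_\nu(x)$, since points of $\Omega$ can lie on the line beyond the first exit. So I would instead work with the version relevant to the subsequent proof, namely with $\mathfrak{D}_\nu$ replaced by $D_\nu=\tau_\nu+\tau_{-\nu}$, or equivalently interpret $\mathfrak{D}_\nu$ along the segment inside $\Omega_x$. With that reading one has $D_\nu\le 2\max(\tau_\nu,\tau_{-\nu})$, and therefore
\begin{equation*}
D_\nu(x)^{-2s}\ge 2^{-2s}\max(\tau_\nu,\tau_{-\nu})^{-2s}.
\end{equation*}

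\textbf{Step 2 (averaging and Jensen).} Since $t\mapsto t^{-2s/N}$ is convex and decreasing on $(0,\infty)$, Jensen's inequality with respect to $d\omega$ gives
\begin{equation*}
\int_{\mathbb{S}^{N-1}}\max(\tau_\nu,\tau_{-\nu})^{-2s}\,d\omega=\int_{\mathbb{S}^{N-1}}\big(\max(\tau_\nu,\tau_{-\nu})^N\big)^{-2s/N}d\omega\ge\Big(\int_{\mathbb{S}^{N-1}}\max(\tau_\nu,\tau_{-\nu})^N\,d\omega\Big)^{-2s/N}.
\end{equation*}
I would then bound $\max(a,b)^N\le a^N+b^N$. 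By the symmetry $\nu\mapsto-\nu$ of $d\omega$ together with \eqref{omega_x}, this yields
\begin{equation*}
\int_{\mathbb{S}^{N-1}}\max(\tau_\nu,\tau_{-\nu})^N\,d\omega\le 2\int_{\mathbb{S}^{N-1}}\tau_\nu^N\,d\omega=\frac{2N|\Omega_x|}{|\mathbb{S}^{N-1}|}.
\end{equation*}
This produces an extra factor $2^{-2s/N}$, which is too lossy.

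\textbf{Step 3 (main obstacle: removing the extra factor).} The extra factor must be avoided to get exactly $2^{-2s}(N|\Omega_x|/|\mathbb{S}^{N-1}|)^{-2s/N}$. The fix I would try first is to apply the pointwise bound differently: $D_\nu\le 2\max(\tau_\nu,\tau_{-\nu})$ is wasteful, and Jensen should be applied to the even function $D_\nu^N/2^N$ directly. By the power mean inequality,
\begin{equation*}
\Big(\frac{\tau_\nu+\tau_{-\nu}}{2}\Big)^N\le\frac{\tau_\nu^N+\tau_{-\nu}^N}{2}.
\end{equation*}
Integrating over the sphere and using the symmetry of $d\omega$,
\begin{equation*}
\int_{\mathbb{S}^{N-1}}\Big(\frac{D_\nu}{2}\Big)^N d\omega\le\int_{\mathbb{S}^{N-1}}\tau_\nu^N\,d\omega=\frac{N|\Omega_x|}{|\mathbb{S}^{N-1}|}.
\end{equation*}
Then Jensen applied to $(D_\nu/2)^{-2s}=\big((D_\nu/2)^N\big)^{-2s/N}$ gives
\begin{equation*}
\int_{\mathbb{S}^{N-1}}D_\nu^{-2s}\,d\omega\ge 2^{-2s}\Big(\frac{N|\Omega_x|}{|\mathbb{S}^{N-1}|}\Big)^{-2s/N},
\end{equation*}
which is the claimed bound with no loss. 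Combined with $\mathfrak{D}_\nu^{-2s}\ge D_\nu^{-2s}$, valid whenever $\mathfrak{D}_\nu\le D_\nu$ as noted in the paper, this proves the lemma.

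The step I expect to need care is justifying $\mathfrak{D}_\nu\le D_\nu$, i.e.\ checking the definitions used by the paper. If this fails for nonconvex sets, I would redefine $\mathfrak{D}_\nu$ relative to the chord through $x$, which is all that the subsequent argument (via Loss–Sloane's one-dimensional inequality on each chord) uses.
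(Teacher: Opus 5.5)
Your Step 3 argument is correct and is essentially the paper's proof. The paper uses the same ingredients ($\mathfrak{D}_\nu\le D_\nu$, antipodal symmetry, Jensen, and the power-mean/H\"older inequality) in the other order: Jensen for $t\mapsto t^{-2s}$ first, then $\int_{\mathbb{S}^{N-1}} D_\nu\,d\omega=2\int_{\mathbb{S}^{N-1}}\tau_\nu\,d\omega\le 2\big(\int_{\mathbb{S}^{N-1}}\tau_\nu^N\,d\omega\big)^{1/N}$.

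Your caveat is justified. The paper asserts $\mathfrak{D}_\nu\le D_\nu$ without proof, but this holds only when each line through $x$ meets $\Omega$ in an interval (e.g.\ convex $\Omega$, which is where the lemma is used). For general open sets the lemma fails: with $x=0$ and $\Omega=B_\varepsilon(0)\cup\{R<|y|<R+1\}$, $R$ large, the left side is $(R+1)^{-2s}$ and the right side is $(2\varepsilon)^{-2s}$, which is larger. Your chord-based reading, for which the Loss--Sloane step remains valid, is the right fix.
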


\begin{proof}
We follow the argument used in \cite{Tidblom}. First, observe that, since $\mathfrak{D}_\nu(x)\leq D_\nu(x)$, we have
$$\displaystyle\int_{\mathbb{S}^{N-1}}\bigg(\frac{1}{\mathfrak{D}_\nu(x)}\bigg)^{2s}\,d\omega(\nu)\geq \displaystyle\int_{\mathbb{S}^{N-1}}\bigg(\frac{1}{D_\nu(x)}\bigg)^{2s}\,d\omega(\nu). $$
Jensen's inequality yields
$$
\displaystyle\int_{\mathbb{S}^{N-1}}\left(\frac{1}{D_{\nu}(x)}\right)^{2s} d \omega(\nu) \geq\left(\int_{\mathbb{S}^{N-1}}D_{\nu}(x) d \omega(\nu)\right)^{-2s}.
$$
The right-hand side could be estimated as follows. We have
$$
\begin{aligned}
\displaystyle\int_{\mathbb{S}^{N-1}}D_{\nu}(x) d \omega(\nu) & =\int_{\mathbb{S}^{N-1}} \left(\tau_{\nu}(x)+\tau_{-\nu}(x)\right) \,d \omega(\nu) \\
& =2\int_{\mathbb{S}^{N-1}} \tau_{\nu}(x) \,d \omega(\nu) \\
& \leq2\left(\int_{\mathbb{S}^{N-1}} \tau_{\nu}^{N}(x) d \omega(\nu)\right)^{\frac{1}{N}} \\
& =2\left(\frac{N\left|\Omega_{x}\right|}{\left|\mathbb{S}^{N-1}\right|}\right)^{\frac{1}{N}},
\end{aligned}
$$
where we have used \eqref{omega_x} in the last inequality. This concludes the proof.
\end{proof}
We now give the proof of Theorem \ref{Geom-Hardy-2}.

\begin{proof}[Proof of Theorem \ref{Geom-Hardy-2}.]
First, we recall the following fractional Hardy inequality from \cite{loss2010hardy}
\begin{equation}\label{loss-Hardy}
\frac{c_{N,s}}{2}\iint_{\Omega\times\Omega}\frac{(u(x)-u(y))^2}{|x-y|^{N+2s}}\ dxdy\geq\mathfrak{h}_{N,s}\int_{\Omega}\frac{u^2}{M_{2s}(x)^{2s}}\ dx.
\end{equation}
Now observe that, by using Lemma \ref{alg-inq} with $a=\displaystyle\frac{1}{d_{\nu,\Omega}(x)}$, $b=\displaystyle\frac{1}{\mathfrak{D}_\nu(x)}$ and $\theta=2s$, we have
\begin{align*}
M_{2s}(x)^{-2s}\geq m_{2s}(x)^{-2s}+(2^{2s}-1)\frac{\sqrt{\pi}\,\Gamma(\frac{N+2s}{2})}{\Gamma(\frac{N}{2})\Gamma(\frac{1+2s}{2})}\displaystyle\int_{\mathbb{S}^{N-1}}\bigg(\frac{1}{\mathfrak{D}_\nu(x)}\bigg)^{2s}\,d\omega(\nu).
\end{align*}
In view of Lemma \ref{D-lower-estimate}, it holds
\begin{align*}
M_{2s}(x)^{-2s}\geq m_{2s}(x)^{-2s}+2^{-2s}(2^{2s}-1)\frac{\sqrt{\pi}\,\Gamma(\frac{N+2s}{2})}{\Gamma(\frac{N}{2})\Gamma(\frac{1+2s}{2})}\left(\frac{N}{|\mathbb{S}^{N-1}|}\right)^{-\frac{2s}{N}}|\Omega_x|^{-\frac{2s}{N}}.
\end{align*}
Inserting this into \eqref{loss-Hardy}, inequality \eqref{geo} follows with
$$
a(N,s)=\mathfrak{h}_{N,s}2^{-2s}(2^{2s}-1)\frac{\sqrt{\pi}\,\Gamma(\frac{N+2s}{2})}{\Gamma(\frac{N}{2})\Gamma(\frac{1+2s}{2})}\left(\frac{N}{|\mathbb{S}^{N-1}|}\right)^{-\frac{2s}{N}}.
$$
This completes the proof of Theorem \ref{Geom-Hardy-2} and with it the proof of Theorem \ref{Geom-Hardy}.
\end{proof}

\begin{proof}[Proof of Theorem \ref{Geom-convex-Hardy}.]
Since $\Omega$ is convex then $m_{2s}(x)^{-2s}\geq \delta(x)^{-2s}$ and $\Omega_x=\Omega$. Taking this into account in Theorem \ref{Geom-Hardy}, the proof follows.
\end{proof}

\begin{proof}[Proof of Theorem \ref{Geom-convex-Hardy-bndd}.]
First, we recall the following fractional Hardy inequality from \cite{dyda remder}
\begin{align}\label{dyda-Hardy}
\frac{c_{N,s}}{2}\iint_{\Omega\times\Omega}&\frac{(u(x)-u(y))^2}{|x-y|^{N+2s}}\ dxdy\geq\mathfrak{h}_{N,s}\int_{\Omega}\frac{u^2}{M_{2s}(x)^{2s}}\ dx\nonumber\\
&+c_{N,s}~\frac{\pi^{\frac{N-1}{2}}\Gamma(s)(4-2^{3-2s})}
{2s\Gamma(\frac{N+2s-1}{2})}\frac{1}{\diam(\Omega)}\int_{\Omega}\frac{u^2}{M_{2s-1}(x)^{2s-1}}\ dx.
\end{align}
Arguing as in the proof of Theorem \ref{Geom-Hardy} and using that $\Omega$ is convex, we have
\begin{align}\label{m2s}
M_{2s}(x)^{-2s}\geq \delta(x)^{-2s}+2^{-2s}(2^{2s}-1)\frac{\sqrt{\pi}\,\Gamma(\frac{N+2s}{2})}{\Gamma(\frac{N}{2})\Gamma(\frac{1+2s}{2})}\left(\frac{N}{|\mathbb{S}^{N-1}|}\right)^{-\frac{2s}{N}}|\Omega|^{-\frac{2s}{N}}.
\end{align}
On the other hand, 
\begin{align}\label{m2s-1}
\nonumber M_{2s-1}(x)^{-2s+1}&\geq\frac{\sqrt{\pi}\,\Gamma(\frac{N+2s-1}{2})}{\Gamma(\frac{N}{2})\Gamma(s)}\displaystyle\int_{\mathbb{S}^{N-1}}\bigg(\frac{2}{\mathfrak{D}_\nu(x)}\bigg)^{2s-1}\,d\omega(\nu)\\
&\geq\frac{\sqrt{\pi}\,\Gamma(\frac{N+2s-1}{2})}{\Gamma(\frac{N}{2})\Gamma(s)}\left(\frac{N}{|\mathbb{S}^{N-1}|}\right)^{-\frac{2s-1}{N}}|\Omega|^{-\frac{2s-1}{N}},
\end{align}
where in the latter, we have used Lemma \ref{D-lower-estimate} with $2s$ replaced by $2s-1$, and the convexity of $\Omega$. Inserting \eqref{m2s} and \eqref{m2s-1} into \eqref{dyda-Hardy} the theorem follows with 
\begin{align*}
\mathfrak{C}(\Omega,N,s)=\left[a(N,s)+b(N,s)~\frac{|\Omega|^{\frac 1N}}{\diam(\Omega)}\right] |\Omega|^{-\frac{2s}{N}},\qquad
\end{align*}
and 
\begin{align*}
b(N,s)=2^{2s-1}(4-2^{3-2s})\frac{\Gamma(\frac{N+2s}{2})}
{\Gamma(\frac{N}{2})\Gamma(1-s)}\left(\frac{N}{|\mathbb{S}^{N-1}|}\right)^{-\frac{2s-1}{N}}. 
\end{align*}
\end{proof}

\begin{proof}[Proof of Theorem \ref{Geom-convex-Hardy-bndd+}.]
Arguing as in the proof of Theorem \ref{Geom-convex-Hardy-bndd} and using that 
\begin{align*}
M_{2s-1}(x)^{-2s+1}\geq m_{2s-1}(x)^{-2s+1}\geq \delta(x)^{-2s+1},
\end{align*}
it follows from \eqref{dyda-Hardy} that \begin{equation*}\label{}
\frac{c_{N,s}}{2}\iint_{\Omega\times\Omega}\frac{(u(x)-u(y))^2}{|x-y|^{N+2s}}\ dxdy\geq\mathfrak{h}_{N,s}\int_{\Omega}\frac{u^2}{\delta^{2s}}\ dx+a(N,s)|\Omega|^{-\frac{2s}{N}}\int_{\Omega}u^2\ dx+\mathcal{K}\int_{\Omega}\frac{u^2}{\delta^{2s-1}}\ dx,
\end{equation*}
for all $u\in H^s_0(\Omega)$, with
\begin{equation*}
\mathcal{K}=\mathcal{K}(\Omega,N,s)=2^{2s-1}(4-2^{3-2s})\frac{\Gamma(s)\Gamma(\frac{N+2s}{2})}
{\Gamma(1-s)\Gamma(\frac{N+2s-1}{2})}\diam(\Omega)^{-1}.
\end{equation*}
This concludes the proof of the Theorem.
\end{proof}

\begin{proof}[Proof of Theorem \ref{second-main-result}] The proof is a consequence of Theorem \ref{Geom-convex-Hardy} and \eqref{a9}.
\end{proof}
\begin{remark}
We notice that Theorem \ref{second-main-result} could be proved by using the following improved Hardy inequality by Dyda and Frank, see \cite{dyda2012fractional}. For any open set $\Omega\subset \mathbb{R}^N$ there exists $\mathcal{C}(N,s)>0$ such that $\forall u\in H^s_0(\Omega)$
\begin{equation}\label{improved dyda-frank}
\frac{c_{N,s}}{2}\displaystyle\iint_{\Omega\times\Omega}\frac{(u(x)-u(y))^2}{|x-y|^{N+2s}}\ dxdy-\mathfrak{h}_{N,s}\int_{\Omega}\frac{u(x)^2}{m_{2s}(x)^{2s}}\,dx\geq \mathcal{C}(N,s)\left(\int_{\Omega}|u|^{\frac{2N}{N-2s}}\right)^{\frac{N-2s}{N}}.
\end{equation}
Since $\Omega$ is convex, then $m_{2s}(x)\leq \delta(x)$. Using Holder's inequality we have $$\left(\int_{\Omega}|u|^{\frac{2N}{N-2s}}\right)^{\frac{N-2s}{N}}\geq|\Omega|^{-\frac{2s}{N}}\int_\Omega u^2\,dx.$$
Coming back to \eqref{improved dyda-frank}, we get
\begin{equation*}
\frac{c_{N,s}}{2}\iint_{\Omega\times\Omega}\frac{(u(x)-u(y))^2}{|x-y|^{N+2s}}\ dxdy-\mathfrak{h}_{N,s}\int_{\Omega}\frac{u(x)^2}{\delta(x)^{2s}}\,dx\geq \mathcal{C}(N,s)|\Omega|^{-\frac{2s}{N}}\int_{\Omega}u^2\ dx,
\end{equation*}
that is
\begin{equation}\label{f}
\frac{\frac{c_{N,s}}{2}\displaystyle\iint_{\Omega\times\Omega}\frac{(u(x)-u(y))^2}{|x-y|^{N+2s}}\ dxdy-\mathfrak{h}_{N,s}\int_{\Omega}\frac{u(x)^2}{\delta(x)^{2s}}\,dx}{\displaystyle\int_{\Omega}u^2\ dx}\geq \mathcal{C}(N,s)|\Omega|^{-\frac{2s}{N}}.
\end{equation}
The result, therefore, follows by taking the infimum in \eqref{f}. Nevertheless, this argument does not yield an explicit expression for the constant $\mathcal{C}(N,s)$.
 \end{remark}

\section{Proof of Theorems \ref{thm_lmada_asym} \& \ref{thm_hardy_asym}}\label{main-asym}
This section is devoted to the prove of Theorems \ref{thm_lmada_asym} \& \ref{thm_hardy_asym}. First, we recall the following Sobolev embeddings, see \cite{di2012}.
\begin{prop}
Let $\Omega$ be an open set in $\R^N$, $0 < s<s' < 1$ and $u : \Omega\to \R$
be a measurable function. Then
$$ \|u\|^2_{H^s(\Omega)}\leq C~\|u\|^2_{H^{s'}(\Omega)},$$
for some suitable positive constant $C = C(N,s)\geq 1$. In particular,
$$H^{s'}(\Omega)\subset H^{s}(\Omega).$$
\end{prop}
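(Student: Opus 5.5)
The statement to prove is the embedding $\|u\|^2_{H^s(\Omega)}\le C\|u\|^2_{H^{s'}(\Omega)}$ for $0<s<s'<1$. The plan is to split the Gagliardo double integral into near-diagonal and far-from-diagonal parts. On the near part we compare the kernels $|x-y|^{-N-2s}$ and $|x-y|^{-N-2s'}$. On the far part the kernel is integrable, so it is controlled by the $L^2$ norm. No regularity of $\Omega$ is needed, since only pointwise kernel comparisons and Fubini are used.

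\textbf{Near-diagonal part.} For $|x-y|<1$ we have $|x-y|^{-N-2s}\le |x-y|^{-N-2s'}$, because $s<s'$. Hence
\begin{equation*}
\iint_{\Omega\times\Omega,\,|x-y|<1}\frac{(u(x)-u(y))^2}{|x-y|^{N+2s}}\,dxdy\le \iint_{\Omega\times\Omega}\frac{(u(x)-u(y))^2}{|x-y|^{N+2s'}}\,dxdy.
\end{equation*}

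\textbf{Far part.} For $|x-y|\ge1$ we use $(u(x)-u(y))^2\le 2u(x)^2+2u(y)^2$ and the symmetry in $x,y$. Then we integrate in $y$ over $\{|x-y|\ge1\}\subset\R^N$:
\begin{equation*}
\iint_{|x-y|\ge1}\frac{(u(x)-u(y))^2}{|x-y|^{N+2s}}\,dxdy\le 4\int_\Omega u(x)^2\int_{|z|\ge1}\frac{dz}{|z|^{N+2s}}\,dx=\frac{2|\mathbb{S}^{N-1}|}{s}\|u\|^2_{L^2(\Omega)}.
\end{equation*}

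\textbf{Assembling the bound.} The seminorm $[\cdot]_s$ carries the factor $c_{N,s}/2$, while $[\cdot]_{s'}$ carries $c_{N,s'}/2$. So the near part contributes at most $\frac{c_{N,s}}{c_{N,s'}}[u]_{s'}$. The far part contributes at most $c_{N,s}\frac{|\mathbb{S}^{N-1}|}{s}\|u\|_{L^2}^2$. Adding $\|u\|^2_{L^2}$ gives the estimate with
\begin{equation*}
C=\max\Big\{1+c_{N,s}\tfrac{|\mathbb{S}^{N-1}|}{s},\ \tfrac{c_{N,s}}{c_{N,s'}}\Big\}\ge 1.
\end{equation*}
The inclusion $H^{s'}(\Omega)\subset H^s(\Omega)$ then follows immediately.

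\textbf{Main obstacle.} There is no real obstacle; the only point needing care is the normalization constants. These make $C$ depend on $s'$ as well as on $N$ and $s$. The dependence is harmless, since $c_{N,s'}>0$ for fixed $s'<1$. In the intended application, $s,s'$ range in a compact subinterval of $(\tfrac12,1)$, so the ratio $c_{N,s}/c_{N,s'}$ stays bounded there.
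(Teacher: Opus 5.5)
Your proof is correct. It is the same near/far-diagonal splitting used in the proof of the result the paper cites from \cite{di2012}; the paper itself gives no argument. Your observation that, with the paper's normalized seminorms, the constant also depends on $s'$ through $c_{N,s}/c_{N,s'}$ (which blows up as $s'\to 1^-$) is accurate: the stated $C=C(N,s)$ comes from the unnormalized seminorm in \cite{di2012}, and this dependence is harmless here since the proposition is not used quantitatively later in the paper.
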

The space $H^{\frac{1}{2}}(\Omega)$ is of particular interest, since it contains constant functions and the Hardy constant $\mu_{N,\frac{1}{2}}(\Omega)=0$ for any open set. These facts will play a crucial role in understanding the asymptotic behavior of the Hardy constant for $s$ close to $\frac{1}{2}$.

Next, we aim to show continuity of the Gagliardo semi-norm with respect to $s$. We recall the elementary inequality 
\begin{equation}\label{elementary-identity-1}
|e^t-1|\leq\sum_{k=1}^{+\infty}\frac{|t|^k}{k!}\leq\sum_{k=1}^{+\infty}\frac{|t|^k}{(k-1)!}\leq|t|e^{|t|},~~~\textnormal{for}~t\in\R.
\end{equation}
Given $r,\gamma>0$, we have the following growth for a logarithmic function:
\begin{equation}\label{logarithmic-decays}
|\log|z||\leq\frac{1}{e\gamma}|z|^{-\gamma}~~\text{if}~~ |z|\leq r~~~~\text{and}~~~~|\log|z||\leq\frac{1}{e\gamma}|z|^{\gamma}~~ \text{if}~~|z|\geq r.
\end{equation}
\begin{lemma}\label{lambda_conti}
Let $\phi\in H^s_0(\Omega)$. Then the map 
\begin{align*}
\left[\frac{1}{2},\,1\right)\to (0,+\infty),\quad s\mapsto f(s):=[\phi]_{s}
\end{align*}
is continuous in $s$.
\end{lemma}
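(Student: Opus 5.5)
The plan is to write
\[
f(s)=\frac{c_{N,s}}{2}\,I(s),\qquad I(s):=\iint_{\Omega\times\Omega}\frac{(\phi(x)-\phi(y))^2}{|x-y|^{N+2s}}\,dxdy ,
\]
and treat the two factors separately. The prefactor is harmless: from the explicit formula $c_{N,s}=2^{2s}\pi^{-\frac N2}s\,\Gamma(\frac{N+2s}{2})/\Gamma(1-s)$, the map $s\mapsto c_{N,s}$ is continuous on $[\frac12,1)$, since $\Gamma(1-s)$ is finite and nonzero there. So everything reduces to continuity of $I$. I will prove this by dominated convergence along an arbitrary sequence $s_n\to s$, splitting $\Omega\times\Omega$ into the near-diagonal region $\{|x-y|<1\}$ and the far region $\{|x-y|\ge 1\}$.

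\emph{Far region.} On $\{|x-y|\ge1\}$ the kernel satisfies $|x-y|^{-N-2s_n}\le 1$, and
\[
(\phi(x)-\phi(y))^2\le 2\phi(x)^2+2\phi(y)^2\in L^1(\Omega\times\Omega)
\]
because $\Omega$ is bounded and $\phi\in L^2(\Omega)$. Since the integrand converges pointwise, dominated convergence gives convergence of this part.

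\emph{Near region, $s_n\le s$.} On $\{|x-y|<1\}$ we have $|x-y|^{-N-2s_n}\le|x-y|^{-N-2s}$. Hence $(\phi(x)-\phi(y))^2|x-y|^{-N-2s}$ is an integrable majorant, exactly because $\phi\in H^s$. Left continuity therefore needs nothing beyond the assumed regularity.

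\emph{Near region, $s_n>s$.} This is the real obstacle, since the kernel gets more singular and $\phi\in H^s_0$ alone does not control $I(s_n)$. For that reason I will read the lemma as applying to a $\phi$ having a little extra regularity: $\phi\in H^{s'}_0(\Omega)$ for some $s'>s$, for instance $\phi\in C^\infty_c(\Omega)$, which is the case actually needed later in the continuity argument for Theorem \ref{thm_hardy_asym}. By the embedding proposition stated just above, $\phi\in H^t(\Omega)$ for every $t\le s'$. For $s_n\le s'$ the near-diagonal kernel is then bounded by $|x-y|^{-N-2s'}$, which again gives an integrable majorant, and dominated convergence concludes.

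If a quantitative modulus is preferred, I would instead use \eqref{elementary-identity-1} with $t=-2(s_n-s)\log|x-y|$, together with the logarithmic bound \eqref{logarithmic-decays} for a small $\gamma>0$. This gives
\[
\big||x-y|^{-2(s_n-s)}-1\big|\le C|s_n-s|\,|x-y|^{-\gamma-2|s_n-s|},
\]
and hence
\[
|I(s_n)-I(s)|\le C|s_n-s|\,[\phi]_{s+\gamma/2+|s_n-s|}.
\]
The right-hand side tends to $0$ once $s+\gamma/2+|s_n-s|\le s'$. The main point to state carefully is therefore the regularity hypothesis needed for right continuity.
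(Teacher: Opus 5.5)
Your argument is correct, and it proves what can actually be true. It follows a different route from the paper's.

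\textbf{The paper's route.} The paper reduces ``by density'' to $\phi\in C^\infty_c(\Omega)$ and then proves a local Lipschitz bound in $s$. It writes $[\phi]_s-[\phi]_{s_0}$ as a $c_{N,s}$-difference plus $\iint(\phi(x)-\phi(y))^2|x-y|^{-N-2s_0}\,||x-y|^{2(s_0-s)}-1|\,dxdy$. It then uses $|e^t-1|\le|t|e^{|t|}$ to extract a factor $|s-s_0|\,|\log|x-y||$. Finally it controls the log-weighted integral via \eqref{logarithmic-decays} and the bound $\|\phi\|_{C^1}$ near the diagonal.

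\textbf{Your route.} Your main argument is qualitative: sequential dominated convergence. Near the diagonal the majorant is $(\phi(x)-\phi(y))^2|x-y|^{-N-2s'}$; away from it the majorant is $2\phi(x)^2+2\phi(y)^2$. This needs no derivative bound, only $\phi\in H^{s'}$ for some $s'$ strictly above the point where continuity is tested. It therefore covers everything the paper actually uses: the $C^\infty_c$ functions and the Lipschitz $u_k$ in the proof of Theorem \ref{thm_lmada_asym}. One small correction: in Theorem \ref{thm_hardy_asym} the lemma is not really needed. There the limit $[u_n]_{s_n}\to 0$ follows directly from $0\le[u_n]_{s_n}=\mu_n<\mathfrak{h}_{N,s_n}\to0$. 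The paper's approach buys an explicit modulus of continuity, but only for smooth $\phi$.

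\textbf{Your caveat is well founded.} For a general $\phi\in H^s_0(\Omega)$ one can have $[\phi]_t=+\infty$ for every $t>s$. An example is a cut-off of $|x|^{-(N/2-s)}|\log|x||^{-b}$ with $b>1/2$. So the map in the statement is not even finite-valued. The paper's density reduction cannot transfer continuity from smooth approximants without control uniform in $t$, and that uniform control is exactly what fails.

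\textbf{Your quantitative variant fixes a slip in the paper.} For $|x-y|<1$ one has $e^{2|s-s_0||\log|x-y||}=|x-y|^{-2|s-s_0|}$, not $|x-y|^{2|s-s_0|}$. Hence the paper's bound by $\diam(\Omega)^{2|s-s_0|}$ is invalid near the diagonal. With your weight $|x-y|^{-\gamma-2|s_n-s|}$, the smooth case closes once $\gamma+2|s-s_0|<2(1-s_0)$, rather than the paper's $\gamma<2(1-s_0)$. This holds for $s$ close to $s_0$.
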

\begin{proof} By density we consider only the case $\phi\in C^\infty_c(\Omega)$. Fix $s_0\in\left[\frac{1}{2},\,1\right)$.
Then
\begin{align*}
\Big|[\phi]_s- [\phi]_{s_0}\Big|
&\leq\frac{1}{2}|c_{N,s}-c_{N,s_0}|[\phi]_{s_0}+\frac{c_{N,s}}{2}\iint_{\Omega\times\Omega}\frac{(\phi(x)-\phi(y))^2}{|x-y|^{N+2s_0}}||x-y|^{2(s_0-s)}-1|\ dxdy\\
&\leq\frac{1}{2}|c_{N,s}-c_{N,s_0}|[\phi]_{s_0}+\frac{c_{N,s}}{2}\iint_{\Omega\times\Omega}\frac{(\phi(x)-\phi(y))^2}{|x-y|^{N+2s_0}}||x-y|^{2(s_0-s)}-1|\ dxdy.
\end{align*}
In view of \eqref{elementary-identity-1} we have that
\begin{align*}
||x-y|^{2(s_0-s)}-1|=|e^{2(s_0-s)\log|x-y|}-1|&\leq2|s_0-s||\log|x-y||e^{2|s_0-s||\log|x-y||}\\&\leq 2|s_0-s||\log|x-y|||x-y|^{2|s_0-s|}.
\end{align*}
Hence, 

\begin{align}\label{phi-phi estim}
\nonumber&\Big|[\phi]_s- [\phi]_{s_0}\Big|\\
&\leq\frac{1}{2}|c_{N,s}-c_{N,s_0}|[\phi]_{s_0}+c_{N,s}|s-s_0|\iint_{\Omega\times\Omega}\frac{(\phi(x)-\phi(y))^2}{|x-y|^{N+2s_0}}||\log|x-y|||x-y|^{2|s-s_0|}\ dxdy\nonumber\\
&\leq \frac{1}{2}|c_{N,s}-c_{N,s_0}|[\phi]_{s_0}+c_{N,s}\diam(\Omega)^{2|s-s_0}|s-s_0|\iint_{\Omega\times\Omega}\frac{(\phi(x)-\phi(y))^2}{|x-y|^{N+2s_0}}|\log|x-y||dxdy,
\end{align}
where $\diam(\Omega)=\sup\{|x-y|:x,y\in\Omega\}$ is the diameter of $\Omega$.

Let us estimate the second term in the last inequality. Denote
$$ \mathbb{A}:=\iint_{\Omega\times\Omega}\frac{(\phi(x)-\phi(y))^2}{|x-y|^{N+2s_0}}|\log|x-y||dxdy.$$
Using \eqref{logarithmic-decays} with $\gamma <2(1-s_0)$, we find that
\begin{align*}
\mathbb{A}&\leq\iint_{\Omega\times\Omega}\frac{(\phi(x)-\phi(y))^2}{|x-y|^{N+2s_0}}|\log|x-y||\ dxdy\\
&=\iint_{\{|x-y|\leq1\}}\frac{(\phi(x)-\phi(y))^2}{|x-y|^{N+2s_0}}|\log|x-y||\ dxdy+\iint_{\{|x-y|>1\}}\frac{(\phi(x)-\phi(y))^2}{|x-y|^{N+2s_0}}|\log|x-y||\ dxdy\\
&\leq e\gamma\Bigg[\iint_{\{|x-y|\leq1\}}\frac{(\phi(x)-\phi(y))^2}{|x-y|^{N+2s_0+\gamma}}\ dxdy+\diam(\Omega)^\gamma\iint_{\{|x-y|>1\}}(\phi(x)-\phi(y))^2\, dxdy\Bigg]\\
&\leq e\gamma\Bigg[\|\phi\|_{C^1}\iint_{\{|x-y|\leq1\}}|x-y|^{2-N-2s_0-\gamma}\ dxdy+\diam(\Omega)^\gamma2\|\phi\|_{L^2(\Omega)}^2|\Omega|\Bigg]=:\mathbb{B}(\Omega,s_0,\gamma).
\end{align*}
Coming back to \eqref{phi-phi estim}, we obtain
\begin{align*}
\Big|[\phi]_s- [\phi]_{s_0}\Big|
&\leq \frac{1}{2}|c_{N,s}-c_{N,s_0}|[\phi]_{s_0}+c_{N,s}\diam(\Omega)^{2|s-s_0|}|s-s_0|\mathbb{B}(\Omega,s_0,\gamma).
\end{align*}
Since the map $s\mapsto c_{N,s}$ is $C^1$ in $(0,1)$ there exists $\bar{c}(N,s_0)>0$ such that, for $s\in (0,1)$, 
\begin{align*}
|c_{N,s}-c_{N,s_0}|\leq \bar{c}(N,s_0) |s-s_0|.    
\end{align*}
Hence, it follows that
\begin{align*}
\Big|[\phi]_s- [\phi]_{s_0}\Big|
\leq \Big[\frac{1}{2}\bar{c}(N,s_0)[\phi]_{s_0}+c_{N,s}\diam(\Omega)^{2|s-s_0|}\mathbb{B}(\Omega,s_0,\gamma)\Big]|s-s_0|.
\end{align*}
From this, the lemma easily follows.
\end{proof}
\begin{lemma}\label{w-lower esti}
Let $\Omega$ be a bounded $C^{1,1}$ open set such that $\mu_{N,s}(\Omega)<\mathfrak{h}_{N,s}$. Let $w$ be a solution to
\begin{align*}
(-\Delta)^s_{\Omega}w=\mu_{N,s}(\Omega)~\frac{w}{\delta^{2s}}\text{ in }\Omega,\quad w>0 \text{ in }\Omega, \quad \text{ and }\quad w=0 \text{ on } \partial\Omega.
\end{align*}
Then there exists $c=c(N,s)>0$ such that, for all $x\in\Omega$,
\begin{align*}
w(x)\geq c\delta(x)^{2s-1}.
\end{align*}
\end{lemma}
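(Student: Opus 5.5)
The plan is to compare $w$ with the Green function of the regional fractional Laplacian. Since $w>0$ and $\mu_{N,s}(\Omega)\ge 0$, we have $(-\Delta)^s_\Omega w = f:=\mu_{N,s}(\Omega)\,\delta^{-2s}w\ge 0$ in $\Omega$. Hence $w$ is a nonnegative supersolution, and in fact
\[
w(x)=\int_\Omega G_\Omega^s(x,y)\,f(y)\,dy ,
\]
where $G^s_\Omega$ is the Green function of $(-\Delta)^s_\Omega$ with zero boundary condition on $\partial\Omega$. This uses that $w\in H^s_0(\Omega)$ is the weak solution with right-hand side $f\in H^{-s}$; the Hardy inequality gives $f\in H^{-s}(\Omega)$, since $\delta^{-s}w\in L^2$.

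For $C^{1,1}$ domains and $s\in(\frac12,1)$, the Green function estimates for the censored process (Chen--Kim) give
\[
G^s_\Omega(x,y)\asymp \min\Big\{1,\frac{\delta(x)^{2s-1}\delta(y)^{2s-1}}{|x-y|^{2(2s-1)}}\Big\}|x-y|^{2s-N}.
\]
This is where the $C^{1,1}$ hypothesis enters. The key steps, in order, are the following.

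\textbf{Step 1.} Fix a compact ball $B\Subset\Omega$ away from $\partial\Omega$, say $B=B_r(x_0)$ with $\delta\ge 2r$ on $B$. Since $w$ is continuous and positive in $\Omega$ (strong maximum principle, as in Corollary \ref{first-main-result-corl}), $\inf_B w=m>0$. Therefore $f\ge \mu_{N,s}(\Omega)\,\delta^{-2s}m\ge c_0>0$ on $B$. Here $\mu_{N,s}(\Omega)>0$ by Dyda's Hardy inequality, since $s>\frac12$.

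\textbf{Step 2.} Use $w(x)\ge c_0\int_B G^s_\Omega(x,y)\,dy$. For $x$ outside a slightly larger ball $B_{3r/2}(x_0)$ and $y\in B$, we have $|x-y|$ comparable to a constant bounded above by $\diam\Omega$ and below by $r/2$. Also $\delta(y)\ge 2r$, so the lower Green bound gives $G^s_\Omega(x,y)\gtrsim \delta(x)^{2s-1}$ (the minimum is attained by the second entry when $\delta(x)$ is small, and is bounded below by a constant times $\delta(x)^{2s-1}$ in general, because $\delta(x)\le\diam\Omega$). Integrating over $B$ yields $w(x)\ge c\,\delta(x)^{2s-1}$ there.

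\textbf{Step 3.} On $B_{3r/2}(x_0)$, the function $\delta^{2s-1}$ is bounded above and $w$ is bounded below by continuity. So the inequality holds there too after shrinking $c$. This gives the claim for all $x\in\Omega$.

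The main obstacle is justifying the Green representation $w=G^s_\Omega f$, equivalently a comparison principle between $w$ and $c\,G^s_\Omega\mathbf 1_B$. Here $f$ is singular at the boundary and only known in weighted $L^2$. I would avoid the full representation: set $v=c_0 G^s_\Omega \mathbf 1_B$, which lies in $H^s_0(\Omega)$ and solves $(-\Delta)^s_\Omega v=c_0\mathbf 1_B\le f$. Then $(-\Delta)^s_\Omega(w-v)\ge0$ weakly with $w-v\in H^s_0(\Omega)$. Testing with $(w-v)^-$ and using $(a(x)-a(y))(a^-(x)-a^-(y))\le -(a^-(x)-a^-(y))^2$ gives $(w-v)^-=0$, i.e. $w\ge v$. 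Only the lower Green bound is then needed.

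Finally, the constant $c$ obtained this way depends on $\Omega$ and on $w$ (through $m$), not only on $(N,s)$. I would interpret the statement accordingly, or normalize $w$, since only positivity of $c$ is used afterwards.
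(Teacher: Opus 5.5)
Your proposal is correct and follows essentially the paper's argument. The paper also combines the Chen--Kim lower bound $G(x,y)\gtrsim \min\{1,\delta(x)^{2s-1}\delta(y)^{2s-1}\}$ (using $|x-y|\le \diam(\Omega)$) with positivity of the source, but applies it to the full representation $w=\mu_{N,s}(\Omega)\,G(\delta^{-2s}w)$ over all of $\Omega$; your weak comparison $w\ge c_0\,G\mathbf 1_B$ avoids having to justify that representation for the singular right-hand side. Your remark that $c$ depends on $\Omega$ and $w$, not only on $(N,s)$, applies equally to the paper's proof, and only positivity of $c$ is used later.
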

\begin{proof}
By regularity results, we know that $w\in C(\Omega)$ and by the Green representation formula we have
\begin{align}\label{w-rep-green}
w(x)=\mu_{N,s}(\Omega)\int_\Omega G(x,y)~\frac{w(y)}{\delta(y)^{2s}}\,dy,
\end{align}
where $G(\cdot,\cdot)$ denotes the Green function for regional fractional Laplacian. By the estimates in \cite{chen-green} we have 
\begin{align*}
G(x,y)&\geq c \min\left( \frac{1}{|x-y|^{N-2s}},\frac{\delta(x)^{2s-1}\delta(y)^{2s-1}}{|x-y|^{N-2+2s}}\right)\\
&\geq c \min(\diam(\Omega)^{N-2s},\diam(\Omega)^{N-2+2s})\min\left(1,\delta(x)^{2s-1}\delta(y)^{2s-1}\right).
\end{align*}
Using this in \eqref{w-rep-green}, we obtain
\begin{align*}\label{}
w(x)\geq \bar c ~\mu_{N,s}(\Omega)\int_\Omega \min\left(1,\delta(x)^{2s-1}\delta(y)^{2s-1}\right)~\frac{w(y)}{\delta(y)^{2s}}\,dy,
\end{align*}
where $\bar c=\bar c(\Omega, N, s)$ is a positive constant. Hence
\begin{align*}\label{}
\frac{w(x)}{\delta(x)^{2s-1}}\geq \bar c ~\mu_{N,s}(\Omega)\int_\Omega \min\left(\delta(x)^{1-2s},\delta(y)^{2s-1}\right)~\frac{w(y)}{\delta(y)^{2s}}\,dy.
\end{align*}
Since the right-hand side can be bounded from below by a positive constant, the Lemma follows. 
\end{proof}
\subsection{Proof of Theorem \ref{thm_hardy_asym}}
In what follows, we give the proof of Theorem \ref{thm_hardy_asym}.
\begin{proof}
By Contradiction, assume that there exists $s_n$ such that $\frac{1}{2}<s_n$, $s_n\to\frac{1}{2}$ and 
\begin{align}\label{u_n vs h_n}
\mu_n(\Omega):=\mu_{N,s_n}(\Omega)<\mathfrak{h}_{N,s_n}, ~~~~\text{ for all } n\in\mathbb{N}.
\end{align}
Then, by Theorem \ref{first-main-result}, $\mu_n(\Omega)$ is achieved and there exists $u_n\in H_0^{s_n}(\Omega)$ such that
\begin{align}\label{u_n equn}
(-\Delta)^{s_n}_{\Omega}u_n=\mu_n(\Omega)~\displaystyle\frac{u_n}{\delta^{2s_n}}~~\textnormal{in}~\Omega \quad \text{ and }\quad u_n=0 \text{ on } \partial\Omega,
\end{align}
normalized by $\displaystyle\int_{\Omega}\frac{u^2_n}{\delta^{2s_n}}=1$. Using $u_n$ as a test function in \eqref{u_n equn} we have
\begin{align}\label{u_nto0}
\left[u_n\right]_{s_n}=\mu_n(\Omega).
\end{align}
By Lemma \ref{lambda_conti}, \eqref{u_n vs h_n}, \eqref{u_nto0} and using that $\mathfrak{h}_{N,s_n}\to0$ as $n\to\infty$, it holds that 
\begin{align*}
\lim_{n\to+\infty}\left[u_n\right]_{s_n}=0.
\end{align*}
Hence, for $n$ large enough, we have
\begin{align}\label{W2s<1}
\iint_{\Omega\times\Omega}\frac{(u_n(x)-u_n(y))^2}{|x-y|^{N+2s_n}}\ dxdy<1.
\end{align}
 On the other hand, Theorem \ref{valdi} with 


\begin{equation*}
\frac{1}{q}<\tau<\frac{1}{2} \quad\text{ and }\quad 2<q<\frac{2N}{N-2(s_n-\tau)},
\end{equation*}
and \eqref{W2s<1} give
\begin{align*}
1>\iint_{\Omega\times\Omega}\frac{(u_n(x)-u_n(y))^2}{|x-y|^{N+2s_n}}\ dxdy\geq C(N,s_n,q,\tau,\Omega)\iint_{\Omega\times\Omega}\frac{|u_n(x)-u_n(y)|^{q}}{|x-y|^{N+q\tau}}\ dxdy.
\end{align*}
By the fractional Hardy inequality, see for instance \cite{dyda2004fractional,loss2010hardy, DV}, the above inequality becomes 
\begin{align}\label{2-p inq}
1> \mu_{N,q,\tau}(\Omega)~C(N,s_n,q,\tau,\Omega)\displaystyle\int_{\Omega}\frac{|u_n(x)|^{q}}{\delta(x)^{q\tau}}~dx.
\end{align}
Notice that $\mu_{N,q,\tau}(\Omega)>0$ since $q\tau>1$.
Recalling Lemma \ref{w-lower esti} we have
\begin{align*}
	u_n(x)\geq c\delta(x)^{2s_n-1}, \text{ for some constant } c=c(N,s_n)>0,
\end{align*}
and therefore \eqref{2-p inq} yields
\begin{align*}
	1> c~\mu_{N,q,\tau}(\Omega)~C(N,s_n,q,\tau,\Omega)~\displaystyle\int_{\Omega}\delta(x)^{(2s_n-1)q-q\tau}\, dx=+\infty,
\end{align*}
where, in the latter, we have used the fact that
\begin{align*}
	(2s_n-1)q-q\tau+1<0,
\end{align*}
for $n$ large enough and by our choice of $q$ and $\tau$. This gives a contradiction and ends the proof of the first part of Theorem \ref{thm_hardy_asym}.\\


The second part of Theorem \ref{thm_hardy_asym} is a consequence of Theorem \ref{first-main-result}. The proof is complete. 
\end{proof}
\subsection{Proof of Theorem \ref{thm_lmada_asym}}

We prove, in what follows, the asymptotic behavior of $\lambda^*(s,\Omega)$ for $s$ close to $\frac{1}{2}$ for any bounded $C^{1,1}$ open set $\Omega$. 
\begin{proof} First, from the definition of $\lambda^*(s,\Omega)$ one could write
\begin{equation}\label{lambda*_def}
\lambda^*(s,\Omega)=\inf\left\{[\phi]_s-\mathfrak{h}_{N,s}\int_{\Omega}\frac{\phi^2}{\delta^{2s}}\, dx:\, \phi\in H^s_0(\Omega) \text{ and } \int_{\Omega}\phi^2\ dx=1\right\}. 
\end{equation}
We observe that by density we may assume that $\phi\in C_c^\infty(\Omega)$ , then by Lemma \ref{lambda_conti} we have 
\begin{align*}
\lim_{s\to\frac{1}{2}}[\phi]_s=[\phi]_{\frac{1}{2}}.
\end{align*}
Furthermore, by direct computation, $\mathfrak{h}_{N,s}\to 0$ as $s\to\frac{1}{2}$. 
In addition, 
\begin{align*}
\int_{\Omega}\frac{\phi^2}{\delta^{2s}}(x)\,dx&=\int_{\Omega}\frac{\phi^2}{\delta^{2}}(x)\delta^{2-2s}(x)\,dx\\
&\leq \diam(\Omega)^{2-2s}\int_{\Omega}\frac{\phi^2}{\delta^{2}}(x)\,dx\\
&\leq \diam(\Omega)^{2-2s} \mu(\Omega)^{-1}\int_{\Omega}|\nabla\phi|^2\,dx,
\end{align*}
where we have used the classical Hardy inequality in the last inequality, and $\mu(\Omega)$\footnote{ Recall that by \cite{Necas} we know that for a bounded Lipschitz domain $\mu(\Omega)>0$.} denotes the classical Hardy constant. From \eqref{lambda*_def} we have
\begin{align*}
\lambda^*(s,\Omega)\leq [\phi]_s-\mathfrak{h}_{N,s}\int_{\Omega}\frac{\phi^2}{\delta^{2s}}\ dx,\quad\forall\phi\in C^{\infty}_c(\Omega)~~\textnormal{with}~~\int_{\Omega}\phi^2\ dx=1,
\end{align*}
from which we get, 
\begin{align*}
\limsup_{s\to\frac{1}{2}^+}\lambda^*(s,\Omega)\leq \lim_{s\to\frac{1}{2}^+}[\phi]_s=[\phi]_{\frac{1}{2}}~~~~\forall\phi\in C^{\infty}_c(\Omega)~~\textnormal{with}~~\int_{\Omega}\phi^2\ dx=1.
\end{align*}
 In particular, for $\phi=\displaystyle\frac{u_k}{\|u_k\|_{L^2(\Omega)}}$ where $u_k\in C^{0,1}_c(\Omega)$ is the sequence defined in \cite[Eq. (A.21)]{fall2023existence}, it follows that 
\begin{align*}
\limsup_{s\to\frac{1}{2}^+}\lambda^*(s,\Omega)\leq [v_k]_{\frac{1}{2}}=\frac{1}{\|u_k\|^2_{L^2(\Omega)}}[u_k]_{\frac{1}{2}}~~~\forall k.
\end{align*}
Thus passing to the limit as $k\to\infty$ and from \cite[Lemma A.5]{fall2023existence}, we obtain 
\begin{align}\label{limsup-lambda-star-2}
\limsup_{s\to\frac{1}{2}^+}\lambda^*(s,\Omega)\leq \lim_{k\to\infty}\frac{1}{\|u_k\|^2_{L^2(\Omega)}}[u_k]_{\frac{1}{2}}=\frac{1}{|\Omega|}[1_{\Omega}]_{\frac{1}{2}}=0. 
\end{align}
Finally, by Theorem \ref{thm_hardy_asym} there exists $s^*=s^*(\Omega)\in (\frac{1}{2},1)$ such that $\mu_{N,s}(\Omega)=\mathfrak{h}_{N,s}$ for all $s\in (\frac{1}{2},s^*)$ and then
$$\lambda^*(s,\Omega)\geq0, \quad \text{ for all } s\in (\frac{1}{2},s^*).$$
Thus by \eqref{limsup-lambda-star-2}, we infer that 
\begin{align*}
0\leq\liminf_{s\to\frac{1}{2}^+}\lambda^*(s,\Omega)\leq \lim_{s\to\frac{1}{2}^+}\lambda^*(s,\Omega)\leq \limsup_{s\to\frac{1}{2}^+}\lambda^*(s,\Omega)\leq0.
\end{align*}
This ends the proof of the Theorem.
\end{proof}

\appendix 

\section{An asymptotic estimate.} \label{appendix2}
In this appendix, we prove the following asymptotic result for
\begin{equation*}
    K(x)=\int_{\Omega}\frac{(\delta(y)-\delta(x))^2}{|y-x|^{N+2s}}\ dy,~~x\in\Omega_{\beta},
\end{equation*}
where we recall that $\Omega_{\beta}=\{x\in\Omega: \delta(x)<\beta\}$ with $\beta>0$ sufficiently small.
\begin{prop}\label{prop-appendix2}
Let $x\in\Omega_{\beta}$. Then
    \begin{equation*}
        K(x)\sim\delta(x)^{2-2s}~~~\textnormal{as}~~~\delta(x)\to0.
    \end{equation*}
\end{prop}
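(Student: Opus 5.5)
The plan is to split $K(x)$ into a near-field part, living at the natural scale $\delta(x)$, and a far-field part, and to estimate each with the $C^{1,1}$ structure of $\delta$ near $\partial\Omega$ recorded in \eqref{diffeo-dom}--\eqref{jacobian-bounds}. Fix $x\in\Omega_\beta$, write $d=\delta(x)$, and set
\begin{equation*}
K(x)=\int_{\Omega\cap B_{d/2}(x)}\frac{(\delta(y)-\delta(x))^2}{|y-x|^{N+2s}}\,dy+\int_{\Omega\setminus B_{d/2}(x)}\frac{(\delta(y)-\delta(x))^2}{|y-x|^{N+2s}}\,dy=:K_1(x)+K_2(x).
\end{equation*}

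\textbf{Step 1 (near field).} For $\beta$ small, $\delta$ is $C^{1,1}$ in $\Omega_{2\beta}$ with $|\nabla\delta|=1$ and $\nabla\delta(x)=-\nu(\sigma(x))$. A Taylor expansion on $B_{d/2}(x)\subset\Omega$ gives
\begin{equation*}
\delta(y)-\delta(x)=\nabla\delta(x)\cdot(y-x)+O(|y-x|^2).
\end{equation*}
Hence $(\delta(y)-\delta(x))^2=(\nabla\delta(x)\cdot(y-x))^2\,(1+O(|y-x|))$ outside a thin double cone around the hyperplane $\nabla\delta(x)^\perp$; on that cone one uses the cruder bound $(\delta(y)-\delta(x))^2\le |y-x|^2$. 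Integrating in polar coordinates and using \eqref{cos-intg} yields
\begin{equation*}
K_1(x)=\frac{|\mathbb S^{N-1}|\,\Gamma(\frac N2)\Gamma(\frac32)}{\sqrt\pi\,\Gamma(\frac{N+2}{2})}\int_0^{d/2}r^{1-2s}\,dr\,\big(1+o(1)\big)=c_{N,s}'\,d^{\,2-2s}\big(1+o(1)\big).
\end{equation*}
This gives the lower bound $K(x)\ge K_1(x)\ge c\,\delta(x)^{2-2s}$, which is the direction actually used in the proof of Theorem \ref{non-existence-theorem}.

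\textbf{Step 2 (far field).} I would estimate $K_2$ by splitting $\Omega\setminus B_{d/2}(x)$ into the annular region $d/2\le |y-x|<\beta$ and the set $|y-x|\ge\beta$. On the annular region, pass to the coordinates $\Psi(\sigma,t)$ of \eqref{def:localCoordBoundary}; there $\delta(y)=t$, so $(\delta(y)-\delta(x))^2=(t-d)^2\le|y-x|^2$. On $|y-x|\ge\beta$, use only $\delta\le\diam(\Omega)$. Both regions give bounds of the form $C\int_{d/2}^{\beta}r^{1-2s}\,dr+C$.

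\textbf{Main obstacle.} Step 2 is where the claimed rate is in danger. The bound above is $O(1)$, not $O(d^{2-2s})$. Worse, near $\partial\Omega$ the function $\delta$ is essentially linear in the normal direction, so for $|y-x|$ comparable to $\beta$ the integrand of $K_2$ does not decay with $d$. I therefore expect that $K_2(x)$ tends to a positive constant, namely $\int_{\Omega}(\delta(y))^2|y-\sigma(x)|^{-N-2s}\,dy$ in the limit, rather than to $0$.

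Concretely, I would first try to prove $K(x)=K_1(x)+O(1)$, with $K_1(x)\sim\delta(x)^{2-2s}$. The statement should then be read as: the singular (near-field) part of $K$ behaves like $\delta^{2-2s}$, and $K$ is bounded above and below by positive constants on $\Omega_\beta$ once the far field is included. Either form suffices for the dominance argument in the proof of Theorem \ref{non-existence-theorem}. There, $K$ is multiplied by $\delta(z_x)^{-\frac{5-2s}{2}}(1-\log\delta(z_x))^{-\alpha}$, and this product must be compared with the leading term $\delta^{-\frac{3-2s}{2}}(1-\log\delta)^{-\alpha}\cdot\delta^{1-2s}$. Checking that this comparison goes through with the $O(1)$ contribution of $K_2$ included, and not merely with $K_1$, is the decisive computation I would carry out last.
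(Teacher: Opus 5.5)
Your diagnosis is correct, and it can be made definitive: the proposition is false as written, so neither your argument nor the paper's can prove it. Since $\delta$ is $1$-Lipschitz, $K(x)\le\int_{B_{\diam(\Omega)}}|z|^{2-N-2s}\,dz<\infty$ uniformly in $x$. Conversely, if $x\in\Omega_\beta$ and $\delta(y)>2\beta$, then $(\delta(y)-\delta(x))^2\ge\beta^2$ and $|x-y|\le\diam(\Omega)$, so
\[
K(x)\ \ge\ \beta^{2}\,\diam(\Omega)^{-N-2s}\,\bigl|\{y\in\Omega:\ \delta(y)>2\beta\}\bigr|\ >0\qquad\text{for all } x\in\Omega_\beta .
\]
Thus $K\asymp 1$ on $\Omega_\beta$, while $\delta^{2-2s}\to0$ because $s<1$. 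Hence $K/\delta^{2-2s}\to\infty$, and $K$ is not comparable to $\delta^{2-2s}$ in any two-sided sense.

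The paper's proof uses essentially your near/far splitting, with a flattening chart on $B_r(x_0)$ in place of your ball $B_{\delta(x)/2}(x)$. Its own formula \eqref{i2} already exhibits the $\delta$-independent positive term $C(N,s)\frac{r^{2-2s}}{2-2s}$. It arrives at $K(x)\approx C\delta(x)^{2-2s}+O(1)$ and then concludes $K\sim\delta^{2-2s}$. That is a non sequitur: the $O(1)$ part is the leading one. What is true is your corrected version: $K_1(x)\sim c\,\delta(x)^{2-2s}$, $K\asymp1$, and in fact $K(x)-\int_\Omega\delta(y)^2|y-\sigma(x)|^{-N-2s}\,dy\to0$.

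In Step 1 the double cone is unnecessary. The estimate $|(\delta(y)-\delta(x))^2-(\nabla\delta(x)\cdot(y-x))^2|\le C|y-x|^3$ on $B_{d/2}(x)$ gives $K_1(x)=\frac{|\mathbb S^{N-1}|}{N(2-2s)}(d/2)^{2-2s}+O(d^{3-2s})$ at once.

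Your remarks about the application, however, need correcting. In the proof of Theorem \ref{non-existence-theorem}, $K(x)$ is multiplied by $-\tfrac12\chi_\alpha''(\delta(z_x))$, which is \emph{positive} for small $\delta(z_x)$. So to get $Lv_\alpha\le0$ one needs an \emph{upper} bound on $K$, not the lower bound you call ``the direction actually used''. That upper bound, $K\lesssim\delta^{2-2s}$, is exactly the false one, so ``either form suffices'' is unjustified.

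Concretely, suppose $\delta(z_x)$ is comparable to $\delta(x)$. Then with $K\asymp1$ this term is of order $\delta^{-\frac{5-2s}{2}}(1-\log\delta)^{-\alpha}$. That is larger by the factor $\delta^{-(2-2s)}$ than the term $\delta^{-\frac{1+2s}{2}}(1-\log\delta)^{-\alpha}$ coming from $(-\Delta)^s_\Omega\delta\sim-\delta^{1-2s}$. Even granting $K\sim\delta^{2-2s}$, both terms, and the Hardy term, would be of the same order $\delta^{-\frac{1+2s}{2}}$, so no ``dominance'' follows without comparing constants.

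That step has to be redone. One option is to keep the Taylor remainder $\chi_\alpha''(\xi_{x,y})$, with $\xi_{x,y}$ between $\delta(x)$ and $\delta(y)$, inside the integral, rather than freezing it at a single point $z_x$ and factoring out $K$.
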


\begin{proof}
 By our assumption $\Omega$ is a $C^{1,1}$-domain. Therefore, locally at the boundary, the domain is a subgraph of a $C^{1,1}$ function. Thus for every $x_0\in\partial\Omega$, there exist $r>0$ and a $\phi\in C^{1,1}(\R^{N-1})$ function such that 
 \begin{equation*}
           \Omega\cap B_r(x_0)=\{x=(x',x_N): x_N>\phi(x')\}~~\textnormal{and}~~\partial\Omega\cap B_r(x_0)=\{(x',x_N): x_N=\phi(x')\}.
 \end{equation*}
 After translation and rotation, we may assume that $x_0=0$ so that
 \begin{equation*}
     \partial\Omega\cap B_r=\{(x',x_N): x_N=\phi(x')\},~~~\phi(0)=|\nabla\phi(0)|=0.
 \end{equation*}
 We consider the change of variables:
 \begin{equation*}
     X'=x',~~ X_N=x_N-\phi(x').
 \end{equation*}
 Then in the new variables, $\Omega\cap B_r\mapsto\{X_N>0\}$ and the boundary is $\{X_N=0\}$. Moreover, the boundary distance becomes $\delta(x)=x_N-\phi(x')+O(|x'|^2)$. So, very close to the boundary,
 \begin{equation*}
     \delta(x)\approx X_N.
 \end{equation*}
 Now, for all $x\in\Omega_{\beta}$ with $\beta$ sufficiently small,
 \begin{align*}
     K(x)&=\int_{\Omega\cap B_r}\frac{(\delta(x)-\delta(y))^2}{|x-y|^{N+2s}}\ dy+\int_{\Omega\setminus B_r}\frac{(\delta(x)-\delta(y))^2}{|x-y|^{N+2s}}\ dy\\
     &=:I(x)+J(x).
 \end{align*}
 Clearly,
 \begin{equation}\label{i1}
     J(x)=O(1).
 \end{equation}
 On the other hand, from the above discussion,
 \begin{align*}
     I(x)&\approx\int_{0}^{r}dY_N\int_{\R^{N-1}}\frac{(X_N-Y_N)^2}{(|X'-Y'|+|X_N-Y_N|^2)^{\frac{N+2s}{2}}}\ dY'\\
     &=\int_{0}^{r}(X_N-Y_N)^2dY_N\int_{\R^{N-1}}\frac{1}{(|X'-Y'|+|X_N-Y_N|^2)^{\frac{N+2s}{2}}}\ dY'\\
     &=\int_{0}^{r}(X_N-Y_N)^2dY_N\int_{\R^{N-1}}\frac{|X_N-Y_N|^{-N-2s}}{\Big(\Big|\frac{X'-Y'}{|X_N-Y_N|}\Big|^2+1\Big)^{\frac{N+2s}{2}}}\ dY'\\
     &=\int_{0}^{r}(X_N-Y_N)^2dY_N\int_{\R^{N-1}}\frac{|X_N-Y_N|^{-1-2s}}{(|Z|^2+1)^{\frac{N+2s}{2}}}\ dZ\\
     &=C(N,s)\int_{0}^{r}\frac{(X_N-Y_N)^2}{|X_N-Y_N|^{1+2s}}\ dY_N,
 \end{align*}
 where
 \begin{equation*}
     C(N,s)=\int_{\R^{N-1}}\frac{1}{(|Z|^2+1)^{\frac{N+2s}{2}}}\ dZ<\infty.
 \end{equation*}
 Set $t=X_N-Y_N$. If $Y_N<X_N$, then $t>0$. Moreover, if $Y_N>X_N$, $t<0$. So
 \begin{align}\label{i2}
   \nonumber  I(x)&\approx C(N,s)\left(\int_{0}^{X_N}\frac{t^2}{t^{1+2s}}\ dt+\int_{-r}^{0}\frac{t^2}{|t|^{1+2s}}\ dt\right)\\
   \nonumber  &=C(N,s)\left(\int_{0}^{X_N}t^{1-2s}\ dt+\int_{0}^{r}t^{1-2s}\ dt\right)\\
     &=C(N,s)\left(\frac{X_N^{2-2s}}{2-2s}+\frac{r^{2-2s}}{2-2s}\right).
 \end{align}
 From \eqref{i1} and \eqref{i2}, we deduce that
 \begin{equation*}
     K(x)\approx C\delta(x)^{2-2s}+O(1).
 \end{equation*}
 Thus,
 \begin{equation*}
     K(x)\sim\delta(x)^{2-2s}~~~\textnormal{as}~~~\delta(x)\to0.
 \end{equation*}
\end{proof}

\section{An embedding Theorem}
Let $s\in (0,1)$, $p\in (1, +\infty)$ and $\Omega$ be an open bounded set with Lipschitz boundary; we denote
\[
W^{s, p}(\Omega):=\left\{u\in L^p(\Omega) \mbox{ such that } [u]_{W^{s,p}(\Omega)}<+\infty \right\}
\]
with
\[
 [u]_{W^{s,p}(\Omega)}:=\left(\iint_{\Omega\times\Omega}\frac{|u(x)-u(y)|^p}{|x-y|^{N+sp}}\,dx\,dy \right)^{1/p},
\]
endowed with the norm
\begin{equation}\label{Wsp_norm}
\|u\|_{W^{s, p}(\Omega)}:=\left([u]^p_{W^{s, p}(\Omega)} + \|u\|^p_{L^p(\Omega)}\right)^{\frac1p} 
\end{equation}
\begin{thm}\label{valdi}
Let $s\in (0,1)$ and $p\in (1, +\infty)$ be such that~$1<sp<N$.
Let $\Omega\subset \R^N$ be a bounded domain with Lipschitz boundary. Assume that $\tau$ and $q$ satisfy,
\begin{equation*}
0\le \tau < s, \quad\&\quad 1 \le q\le\frac{Np}{N-(s-\tau)p}.
\end{equation*}
Then,  there exists a positive constant~$\mathcal{C}=\mathcal{C}(N,s,p,q,\tau,\Omega)$ such that, for any~$u\in W_0^{s,p}(\Omega)$,
	\vspace{3mm}
\begin{align}\label{valdi*}
\int_\Omega\int_\Omega\frac{|u(x)-u(y)|^{q}}{|x-y|^{N+q\tau}}\,dy\,dx\leq \mathcal{C} \int_\Omega\int_\Omega\frac{|u(x)-u(y)|^{p}}{|x-y|^{N+ps}}\,dy\,dx,
\end{align}
namely, the space $W_0^{s,p}(\Omega)$ is continuously embedded in $W_0^{\tau,q}(\Omega)$.
\end{thm}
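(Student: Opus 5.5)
The embedding in Theorem \ref{valdi} will follow by chaining three standard facts: the fractional Sobolev embedding of $W^{s,p}$ into $L^{p^*}$ with $p^*=\frac{Np}{N-sp}$ (valid on Lipschitz extension domains), the interpolation (Gagliardo--Nirenberg type) inequality between $W^{s,p}$ and $L^{p^*}$, and Hölder's inequality on the bounded set $\Omega$. Since $u\in W^{s,p}_0(\Omega)$ and $sp>1$, the Poincaré (or fractional Hardy) inequality on $\Omega$ gives $\|u\|_{L^p(\Omega)}\le C[u]_{W^{s,p}(\Omega)}$. The full norm \eqref{Wsp_norm} is therefore controlled by the seminorm alone, and this is why the right-hand side of \eqref{valdi*} contains only the seminorm.

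\textbf{Step 1 (reduction to the endpoint).} First I would reduce to $q=q^*:=\frac{Np}{N-(s-\tau)p}$. For $q<q^*$, Hölder's inequality on the bounded domain $\Omega\times\Omega$ with the finite measure $|x-y|^{-N+\epsilon}\,dx\,dy$ should do it: write $\frac{|u(x)-u(y)|^q}{|x-y|^{N+q\tau}}$ as a product and use a slightly larger differentiability index $\tau'>\tau$ at exponent $q^*$, so that the extra power of $|x-y|$ is integrable. Concretely, $[u]_{W^{\tau,q}}\le C[u]_{W^{\tau',q^*}}$ whenever $\tau<\tau'$ and $q\le q^*$, with $\tau'$ chosen so that $q^*(\tau')$ is still admissible. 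Since the admissible range is open in $\tau$, this costs nothing.

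\textbf{Step 2 (endpoint).} Next, extend $u$ by zero. For $sp>1$ the zero extension belongs to $W^{s,p}(\R^N)$ with $[\tilde u]_{W^{s,p}(\R^N)}\le C[u]_{W^{s,p}(\Omega)}$: the cross term $\int_\Omega |u|^p\delta^{-sp}$ is controlled by the fractional Hardy inequality of Dyda \cite{dyda2004fractional}. On $\R^N$ I would then invoke the Gagliardo--Nirenberg inequality for fractional seminorms (Brezis--Mironescu),
\begin{equation*}
[\tilde u]_{W^{\tau,q}(\R^N)}\le C[\tilde u]_{W^{s,p}(\R^N)}^{\theta}\|\tilde u\|_{L^{r}(\R^N)}^{1-\theta},\qquad \tau=\theta s,\quad \frac1q=\frac{\theta}{p}+\frac{1-\theta}{r}.
\end{equation*}
Here $r=p^*$, and $\theta=\tau/s$ gives exactly $q=q^*$, including the trivial case $\tau=0$. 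Combining this with the Sobolev bound $\|\tilde u\|_{L^{p^*}}\le C[\tilde u]_{W^{s,p}(\R^N)}$ and restricting back to $\Omega\times\Omega$ yields \eqref{valdi*}.

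\textbf{Main obstacle.} The delicate step is the endpoint interpolation. The Brezis--Mironescu inequality carries exceptional cases (for instance when $s=1$, $p=1$), which do not arise here since $s<1$ and $p>1$, so it applies. If citing it is undesirable, I would instead prove the endpoint via Besov/Triebel--Lizorkin embeddings, using $W^{s,p}=B^{s}_{p,p}\hookrightarrow B^{\tau}_{q,p}\hookrightarrow B^\tau_{q,q}$ for $q\ge p$ with matching differential dimension $s-\frac Np=\tau-\frac Nq$. For $q<p$ the embedding $B^\tau_{q,p}\hookrightarrow B^\tau_{q,q}$ fails, but in that range one simply uses Hölder plus the bounded domain, as in Step 1.
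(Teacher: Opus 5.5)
For $\tau>0$ your chain of estimates is sound, and it follows a genuinely different route from the paper.

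\textbf{Comparison of routes.} The paper disposes of the statement in one line. It quotes \cite[Theorem 1.2]{valdi}, a full-norm embedding $\|u\|_{W^{\tau,q}(\Omega)}\le C\|u\|_{W^{s,p}(\Omega)}$ on bounded Lipschitz domains. It then drops the lower-order term $\|u\|_{L^p(\Omega)}$ on the right, which tacitly uses a Poincar\'e/Sobolev bound $\|u\|_{L^p(\Omega)}\le C[u]_{W^{s,p}(\Omega)}$ on $W^{s,p}_0(\Omega)$; the paper does not spell this out.

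You instead reduce everything to whole-space facts:
\begin{itemize}
\item zero extension controlled by Dyda's Hardy inequality \cite{dyda2004fractional}, which is exactly where $sp>1$ and the Lipschitz boundary enter;
\item the homogeneous Gagliardo--Nirenberg inequality together with fractional Sobolev at $q=q^*$; the homogeneous form follows from the Brezis--Mironescu full-norm version by scaling, since the $L^p$ part scales like $\lambda^{-N/p}$ rather than $\lambda^{-N/p^*}$ and drops out;
\item H\"older with a slightly larger $\tau'$ below the endpoint. You rightly insist on $\tau'>\tau$, since otherwise the leftover weight $|x-y|^{-N}$ is not integrable on $\Omega\times\Omega$.
\end{itemize}
Your route is longer but self-contained modulo classical $\R^N$ inequalities. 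It produces the seminorm-only bound directly and makes the roles of $W^{s,p}_0$ and $sp>1$ explicit. The paper's route is shorter but rests entirely on a specialized domain embedding.

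\textbf{First correction: homogeneity.} As displayed, \eqref{valdi*} has the $q$-th power on the left and the $p$-th power on the right. Replacing $u$ by $\lambda u$ shows it fails whenever $q\neq p$. What you (and the paper) actually prove is the scale-invariant bound $[u]_{W^{\tau,q}(\Omega)}\le \mathcal{C}\,[u]_{W^{s,p}(\Omega)}$, with roots on both sides. You should state that, rather than saying the argument ``yields \eqref{valdi*}''. This form is all that is used in the proof of Theorem \ref{thm_hardy_asym}, where $p=2<q$ and the right-hand side is below $1$.

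\textbf{Second correction: $\tau=0$ is not trivial.} At $\tau=0$ the left side of \eqref{valdi*} is $\iint_{\Omega\times\Omega}|u(x)-u(y)|^q|x-y|^{-N}\,dx\,dy$, not $\|u\|_{L^q(\Omega)}^q$. Your Step 1 does cover $\tau=0$ with $q<\frac{Np}{N-sp}$. At $q=\frac{Np}{N-sp}$, however, this quantity can be infinite on $W^{s,p}_0(\Omega)$. Take $u=|x|^{-(N-sp)/p}\log(1/|x|)^{-\gamma}$, cut off near an interior point:
\begin{itemize}
\item $u\in W^{s,p}_0(\Omega)$ once $\gamma p>1$;
\item the region $|y|\gg|x|$ contributes at least a multiple of $\int_0^{1/2} r^{-1}\log(1/r)^{1-\gamma q}\,dr$, which diverges for $\gamma q\le 2$.
\end{itemize}
Both conditions hold for some $\gamma$ exactly when $\frac1p<\frac2q$, i.e.\ when $2sp<N$. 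So at $\tau=0$ the endpoint must either be excluded or read with the $L^q$ norm, which is what your Sobolev step actually controls.
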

\begin{proof}
Let $u\in W_0^{s,p}(\Omega)$, by \cite[Theorem 1.2]{valdi}, there exists a constant $C=C(N,s,p,q,\tau,\Omega)>0$ such that
\begin{equation}\label{wsp_wtq}
\|u\|_{W^{\tau,q}(\Omega)}
\le C\|u\|_{W^{s,p}(\Omega)}.
\end{equation}
Using that
\begin{equation*}
1<q\le\frac{Np}{N-(s-\tau)p}\leq\frac{Np}{N-sp},
\end{equation*}
\eqref{valdi*} follows from \eqref{Wsp_norm} and \eqref{wsp_wtq}.

\end{proof}

\textbf{Acknowledgements:} The second author is supported by The Fields Institute. This project was initiated at Rutgers University during a visit of the authors to the Department of Mathematics in October 2024. They wish to thank the Department for their kind hospitality.\\

\textbf{Data Availability}

No datasets were generated or analyzed during the current study.

\bibliographystyle{plain}

\end{document}